\tikzset{cd/.style=matrix of math nodes,row sep=2em,column sep=2em, text height=1.5ex, text depth=0.5ex}
\tikzset{cdar/.style=->,auto}
\setlist[enumerate,1]{label=\textup{(\arabic*)}}
\setlist[enumerate,2]{label=\textup{(\alph*)}}
\newcommand*{\MRref}[2]{ \href{http://www.ams.org/mathscinet-getitem?mr=#1}{MR #1}}
\renewcommand{\PrintDOI}[1]{\href{http://dx.doi.org/\detokenize{#1}}{doi: \detokenize{#1}}%
  \IfEmptyBibField{pages}{, (to appear in print)}{}}
\numberwithin{equation}{section}
\theoremstyle{plain}
\newtheorem{theorem}[equation]{Theorem}
\newtheorem{lemma}[equation]{Lemma}
\newtheorem{proposition}[equation]{Proposition}
\newtheorem{corollary}[equation]{Corollary}
\theoremstyle{definition}
\newtheorem{definition}[equation]{Definition}
\theoremstyle{remark}
\newtheorem{remark}[equation]{Remark}
\newtheorem{example}[equation]{Example}
\newcommand*{\prto}{\twoheadrightarrow}
\newcommand*{\into}{\rightarrowtail}
\newcommand*{\nb}{\nobreakdash}
\newcommand*{\Star}{\(^*\)\nb-}
\newcommand*{\C}{\mathbb C}
\newcommand*{\Z}{\mathbb Z}
\newcommand*{\R}{\mathbb R}
\newcommand*{\T}{\mathbb T}
\newcommand*{\Mat}{\mathbb M}
\newcommand*{\G}[1][G]{\mathbb #1}
\newcommand*{\DuG}[1][G]{\hat{\mathbb{#1}}}
\newcommand*{\Comult}[1][]{\Delta_{#1}}
\newcommand*{\DuComult}[1][]{\hat{\Delta}_{#1}}
\newcommand*{\Qgrp}[2]{\mathbb #1=(#2,\Comult[#2])}
\newcommand*{\DuQgrp}[2]{\widehat{\mathbb #1}=(\hat{#2},\DuComult[#2])}
\newcommand*{\Coinv}{\textup R}
\newcommand*{\CLS}{\mathrm{CLS}}
\newcommand*{\Bound}{\mathbb B}
\newcommand*{\Comp}{\mathbb K}
\newcommand*{\blank}{\textup{\textvisiblespace}}
\newcommand*{\hboxt}{\mathbin{\widetilde\boxtimes}}
\newcommand*{\hot}{\mathbin{\hat\otimes}}
\newcommand*{\hodot}{\mathbin{\hat\odot}}
\newcommand*{\ima}{\textup i}
\newcommand*{\univ}{\textup u}
\newcommand*{\red}{\textup r}
\newcommand*{\ab}{\textup{ab}}
\newcommand*{\transpose}{\mathsf T}
\newcommand*{\Ad}{\mathrm{Ad}}
\newcommand*{\Contvin}{\textup C_0}
\newcommand*{\Cont}{\textup C}
\newcommand*{\Mor}{\textup{Mor}}
\newcommand*{\Id}{\textup{id}}
\newcommand*{\Multunit}{\mathbb W}
\newcommand*{\multunit}[1][]{\textup W^{#1}}
\newcommand*{\DuMultunit}{\widehat{\mathbb W}}
\newcommand*{\Dumultunit}[1][]{\widehat{\textup W}{}^{#1}}
\newcommand*{\bichar}{\chi}
\newcommand*{\Dubichar}{\hat{\chi}}
\newcommand*{\HeisPair}[1]{(#1,\hat{#1})}
\newcommand*{\dumaxcorep}[1][]{\mathcal W^{#1}}
\newcommand*{\maxcorep}[1][]{\widetilde{\mathcal W}^{#1}}
\newcommand*{\Flip}{\Sigma}
\newcommand*{\flip}{\sigma}
\newcommand*{\Cst}{\mathrm C^*}
\newcommand*{\Cred}{\mathrm C^*_\mathrm r}
\newcommand*{\Cstcat}{\mathfrak{C^*alg}}
\newcommand*{\Forget}{\mathsf{For}}
\newcommand*{\Hils}[1][H]{\mathcal #1}
\newcommand*{\Hilm}[1][E]{\mathcal #1}
\newcommand*{\Mult}{\mathcal M}
\newcommand*{\U}{\mathcal U}
\newcommand*{\defeq}{\mathrel{\vcentcolon=}}
\newcommand*{\norm}[1]{\lVert#1\rVert}
\newcommand*{\conj}[1]{\overline{#1}}
\DeclareMathOperator{\Aut}{Aut}
\begin{document}
\title{Quantum group-twisted tensor products of \(\Cst\)\nb-algebras}

\author{Ralf Meyer}
\email{rameyer@uni-math.gwdg.de}

\author{Sutanu Roy}
\email{sutanu@uni-math.gwdg.de}
\address{Mathematisches Institut\\
  Georg-August Universität Göttingen\\
  Bunsenstraße 3--5\\
  37073 Göttingen\\
  Germany}

\author{Stanisław Lech Woronowicz}
\email{Stanislaw.Woronowicz@fuw.edu.pl}
\address{Instytut Matematyki\\Uniwersytet w Białymstoku, and\\
  Katedra Metod Matematycznych Fizyki\\
  Wydział Fizyki, Uniwersytet Warszawski\\
  Hoża 74\\
  00-682 Warszawa\\
  Poland}

\begin{abstract}
  We put two \(\Cst\)\nb-algebras together in a noncommutative tensor
  product using quantum group coactions on them and a bicharacter
  relating the two quantum groups that act.  We describe this twisted
  tensor product in two equivalent ways, based on certain pairs of
  quantum group representations and based on covariant Hilbert space
  representations, respectively.  We establish basic properties of the
  twisted tensor product and study some examples.
\end{abstract}

\subjclass[2010]{81R50 (46L05 46L55)}
\keywords{C*-algebra, tensor product, crossed product, Heisenberg pair}

\thanks{Supported by the German Research Foundation (Deutsche
  Forschungsgemeinschaft (DFG)) through the Research Training Group
  1493 and the Institutional Strategy of the University of
  G\"ottingen, and by the Alexander von Humboldt-Stiftung
  and the National Science Centre (NCN) grant no.\@ 2011/01/B/ST1/05011.}

\maketitle

\section{Introduction}
\label{sec:introduction}

Several important constructions put together two \(\Cst\)\nb-algebras
in a kind of tensor product where the tensor factors do not commute.
For instance, a noncommutative two-torus is obtained in this way from
two copies of \(\Cont(\T)\).  The reduced crossed product \(A\rtimes_\alpha
G\) for a continuous action \(\alpha\colon G\to\Aut(A)\) of a locally
compact group~\(G\) combines \(A\) and the reduced group \(\Cst\)\nb-algebra
of~\(G\).  Such crossed products also exist for locally compact
quantum groups.  Another example is the skew-commutative tensor
product for \(\Z/2\)-graded \(\Cst\)\nb-algebras, which is defined so
that the odd elements anticommute.

We shall construct twisted tensor products using quantum group
coactions on the tensor factors.  The examples mentioned above are
special cases of our theory.  Our construction is closely related to
one by Vaes~\cite{Vaes:Induction_Imprimitivity}, studied in more
detail by Nest and Voigt~\cite{Nest-Voigt:Poincare}; it is more general
because we allow two different quantum groups to act on the tensor
factors and do not need Haar weights on quantum groups.  Moreover,
we provide two different constructions of the noncommutative tensor
product and use them to prove many formal properties.

Our twisted tensor product uses the following data: two
\(\Cst\)\nb-quantum groups \(\Qgrp{G}{A}\) and \(\Qgrp{H}{B}\) (in the
sense of~\cite{Soltan-Woronowicz:Multiplicative_unitaries}); a
bicharacter \(\bichar\in\U(\hat{A}\otimes\hat{B})\); and two
\(\Cst\)\nb-algebras \(C\) and~\(D\) with continuous coactions
\(\gamma\colon C\to C\otimes A\) and \(\delta\colon D\to D\otimes B\)
of \(\G\) and~\(\G[H]\), respectively.  Then we define a \(\Cst\)\nb-algebra
\[
C\boxtimes_\bichar D
= (C,\gamma) \boxtimes_\bichar (D,\delta)
\]
with nondegenerate \Star{}homomorphisms
\[
C \xrightarrow{\iota_C} \Mult(C\boxtimes_\bichar D)
\xleftarrow{\iota_D} D
\]
such that \(\iota_C(C)\cdot\iota_D(D)\) is linearly dense
in~\(C\boxtimes_\bichar D\).  We briefly call \((C\boxtimes_\bichar
D,\iota_C,\iota_D)\) a crossed product of \(C\) and~\(D\).

We now give several examples.

First the trivial, commutative case.  If \(\bichar=1\) or if
\(\gamma\) or~\(\delta\) is trivial, then \(C\boxtimes_\bichar D\) is
the minimal \(\Cst\)\nb-tensor product with the usual maps \(\iota_C\)
and~\(\iota_D\).

Secondly, let \(A=B=\Cst(\Z/2)\).  Then \(C\) and~\(D\) are
\(\Z/2\)\nb-graded \(\Cst\)\nb-algebras.  Let~\(\bichar\) be the
unique non-trivial bicharacter in
\(\hat{A}\otimes\hat{B}=\Cont(\Z/2\times\Z/2)\), defined by
\(\bichar(a,b)\defeq a\cdot b\) for \(a,b\in\Z/2=\{\pm1\}\).  Then
\(C\boxtimes_\bichar D\) is the (spatial) skew-commutative tensor
product of \(C\) and~\(D\).

Thirdly, let \(\G[H]=\DuG\) be the reduced dual of~\(\G\) and let
\(\bichar=\multunit[A]\in\U(\hat{A}\otimes A)\) be the reduced
bicharacter; here we identify the bidual
of~\(\G\) with~\(\G\).  If~\(\G\) has a Haar weight then our construction 
is equivalent to one by Nest and Voigt~\cite{Nest-Voigt:Poincare}.  
In particular, for~\(D=\hat{A}\) and \(\delta=\DuComult[A]\),
\(C\boxtimes_{\multunit[A]} \hat{A}\) is the reduced crossed product
for the coaction~\(\gamma\).

Finally, let \(A=B=\Cont(\T^n)\), so that coactions of \(\G=\G[H]\)
are actions of the \(n\)\nb-torus group~\(\T^n\), and let
\(C=D=\Cont(\T^n)\) with \(\gamma=\delta=\Comult[A]\), corresponding
to the translation action of~\(\T^n\) on itself.  A bicharacter
\(\bichar\in\U(\hat{A}\otimes\hat{B})\) is equivalent to a map
\(\chi\colon \Z^n\times\Z^n\to\T\) that is multiplicative in both
variables.  Thus \(\chi((a_n),(b_n)) = \prod_{i,j=1}^n
\lambda_{ij}^{a_i\cdot b_j}\) for some \((\lambda_{ij})_{1\le i,j\le
  n}\in\T\).  The resulting tensor product
\(\Cont(\T^n)\boxtimes_\bichar \Cont(\T^n)\) is generated by
\(2n\)~unitaries \(U_1,\dotsc,U_n\) and \(V_1,\dotsc,V_n\), with the
following commutation relations.  First, the \(U_i\) and the~\(V_i\)
commute among themselves, so that they generate two copies of
\(\Cont(\T^n)\).  Secondly, \(V_iU_j = \lambda_{ij} U_j V_i\) for all
\(1\le i,j\le n\).  We get all noncommutative \(2n\)\nb-tori in
this way.

Now we describe two constructions of \(C\boxtimes_\bichar D\).

The first one uses a pair of representations \(\alpha\colon
A\to\Bound(\Hils)\), \(\beta\colon B\to\Bound(\Hils)\) on the same
Hilbert space.  This yields embeddings \(\iota_C\defeq
((\Id_C\otimes\alpha)\circ \gamma)_{13}\colon C\to\Mult(C\otimes
D\otimes\Comp(\Hils))\) and \(\iota_D\defeq ((\Id\otimes\beta)\circ
\delta)_{23}\colon D\to\Mult(C\otimes D\otimes\Comp(\Hils))\).  We let
\(C\boxtimes_\bichar D\) be the closed linear span of
\(\iota_C(C)\cdot \iota_D(D)\) for a suitable choice of \(\alpha\)
and~\(\beta\).  We call suitable pairs \((\alpha,\beta)\)
\emph{\(\bichar\)\nb-Heisenberg pairs}.  The closed linear span of
\(\iota_C(C)\cdot \iota_D(D)\) is a \(\Cst\)\nb-algebra, and different
\(\bichar\)\nb-Heisenberg pairs \((\alpha,\beta)\) yield equivalent
crossed products.

The definition of a \(\bichar\)\nb-Heisenberg pair generalises the
Weyl form of the canonical commutation relations (see
Example~\ref{exa:Heisenberg_pair_name}).  It is also a variant of the
usual pentagon equation for multiplicative unitaries (see
Example~\ref{exa:Heisenberg_from_Multunit}).  In terms of the reduced
bicharacters \(\multunit[A]\in\U(\hat{A}\otimes A)\) and
\(\multunit[B]\in\U(\hat{B}\otimes B)\), the pair
\((\alpha,\beta)\) is a \(\bichar\)\nb-Heisenberg pair if
\(\multunit[A]_{1\alpha}\multunit[B]_{2\beta}
=\multunit[B]_{2\beta}\multunit[A]_{1\alpha} \bichar_{12}\) in
\(\U(\hat{A}\otimes\hat{B}\otimes\Comp(\Hils))\);
here~\(\multunit[A]_{1\alpha}\) means that we apply
\(\Id\otimes\Id\otimes\alpha\) to~\(\multunit[A]_{13}\).

Our second approach uses covariant representations of \((C,\gamma)\)
and~\((D,\delta)\) on Hilbert spaces \(\Hils\) and~\(\Hils[K]\).
These contain corepresentations of \(A\) and~\(B\), which allow us to
turn~\(\bichar\) into a unitary operator~\(Z\) on
\(\Hils\otimes\Hils[K]\).  Assuming that the representations of \(C\)
and~\(D\) are faithful, we show that we get a faithful representation
\(C\boxtimes_\bichar D\to\Bound(\Hils\otimes\Hils[K])\), mapping
\(C\ni c\mapsto c\otimes 1\) and \(D\ni d\mapsto Z(1\otimes d)Z^*\).

We also establish functoriality properties of~\(\boxtimes\).  These
say that a pair of equivariant ``maps'' \(f\colon C\to C'\), \(g\colon
D\to D'\) induces a ``map'' \(C\boxtimes_\bichar D\to
C'\boxtimes_\bichar D'\).  ``Maps'' could mean, among others,
morphisms, \Star{}homomorphisms, completely positive contractions, or
\(\Cst\)\nb-correspondences.  We also examine when
\(f\boxtimes_\bichar g\) is injective or surjective.  Functoriality
for \(\Cst\)\nb-correspondences also shows that \(C\boxtimes_\bichar
D\) is Morita--Rieffel equivalent to~\(C'\boxtimes_\bichar D'\) if
\(C,C'\) and \(D,D'\) are equivariantly Morita--Rieffel equivalent.

Related to Morita--Rieffel equivalence, we show that
\[
(C,\gamma) \boxtimes_\bichar (D,\delta) \cong
(C,\gamma') \boxtimes_\bichar (D,\delta')
\]
if \(\gamma' = \Ad_{u_\gamma} \circ \gamma\) and \(\delta' =
\Ad_{u_\delta} \circ \delta\) for cocycles
\(u_\gamma\in\U(C\otimes A)\) and \(u_\delta\in\U(D\otimes
B)\).  This generalises the well-known isomorphism between the reduced
crossed products for an inner action and the trivial action.

Finally, we consider the examples mentioned above.  If \(A\)
and~\(B\) are Abelian groups, then we identify \(C \boxtimes_\bichar
D\) with a Rieffel deformation of the ordinary tensor product
\(C\otimes D\).  We identify \((C,\gamma)\boxtimes_{\multunit[A]}
(\hat{A},\DuComult[A])\) with the reduced crossed product for the
coaction~\(\gamma\) on~\(C\).

The dual coaction on reduced crossed products is an instance of the
functoriality of~\(\boxtimes\).  The coaction \(\DuComult[A]\colon
\hat{A}\to \hat{A}\otimes\hat{A}\) is a \(\DuG\)\nb-equivariant map
if \(\hat{A}\otimes\hat{A}\) caries the right \(\DuG\)\nb-coaction
\(\Id_{\hat{A}}\otimes \DuComult[A]\).  By functoriality
of~\(\boxtimes\), it induces a morphism
\[
(C,\gamma)\boxtimes_{\multunit[A]} (\hat{A},\DuComult[A])
\to (C,\gamma)\boxtimes_{\multunit[A]}
(\hat{A}\otimes\hat{A},\Id_{\hat{A}}\otimes \DuComult[A])
\cong \hat{A}\otimes \bigl((C,\gamma)\boxtimes_{\multunit[A]}
(\hat{A},\DuComult[A])\bigr).
\]
This is a continuous left \(\hat{A}\)\nb-coaction on
\((C,\gamma)\boxtimes_{\multunit[A]} (\hat{A},\DuComult[A])\).  It
is equivalent to the dual coaction on the reduced crossed product.

In a purely algebraic setting, noncommutative tensor products of two
algebras \(A\) and~\(B\) may be studied using a commutation map
\(R\colon B\otimes A\to A\otimes B\), such that \((a_1\otimes
b_1)\cdot (a_2\otimes b_2)\defeq a_1\cdot R(b_1\otimes a_2)\cdot
b_2\in A\otimes B\); the properties needed for this to be
associative are worked out in~\cite{Daele-Keer:Yang-Baxter}.  Even
more general twisting procedures work quite well algebraically,
see~\cite{Lopez-Panaite-Oystaeyen:Twisting}.  The closest precursor
of our construction~\(\boxtimes\) is
\cite{Majid:Quantum_grp}*{Corollary 9.2.13}, which defines a
noncommutative tensor product for \(H\)\nb-comodule algebras over a
quasitriangular Hopf algebra~\(H\).  In a \(\Cst\)\nb-algebra
context, Ruy Exel~\cite{Exel:Blend_Alloys} treats some examples of
noncommutative tensor products using commutation maps.  Mostly,
however, commutation maps or explicit formulas for the product do
not help to construct \emph{\(\Cst\)\nb-algebras}.  First,
\(R(b\otimes a)=b\cdot a\) need not be a \emph{finite} linear
combination of products \(a_i\cdot b_i\); secondly, we do not yet
know the \(\Cst\)\nb-norm on \(A\otimes B\) in which we could
approximate \(b\cdot a\) by finite sums of products \(a_i\cdot
b_i\); thirdly, the choice of a \(\Cst\)\nb-norm for this
convergence would already impose some subtle constraints on the
possible twisted multiplications (recall, for instance, that the
spectral radius of a self-adjoint element in a \(\Cst\)\nb-algebra
is equal to its norm).  This article addresses the analytic
difficulties in defining noncommutative \(\Cst\)\nb-algebra tensor
products.

\section{Preliminaries}
\label{sec:preliminaries}

For two norm-closed subsets \(X\) and~\(Y\) of a \(\Cst\)\nb-algebra,
let
\[
X\cdot Y\defeq\{xy: x\in X, y\in Y\}^\CLS,
\]
where CLS stands for the \emph{closed linear span}.

For a \(\Cst\)\nb-algebra~\(A\), let \(\Mult(A)\) be its multiplier
algebra and let \(\U(A)\) be the group of unitary multipliers
of~\(A\).  A unitary \(U\in\U(A)\) defines an automorphism
\(\Ad_U\colon A\to A\), \(a\mapsto UaU^*\).

Let~\(\Cstcat\) be the category of \(\Cst\)\nb-algebras with
nondegenerate \Star{}homomorphisms \(\varphi\colon A\to\Mult(B)\) as
morphisms \(A\to B\); let \(\Mor(A,B)\) denote this set of morphisms.

Let~\(\Hils\) be a Hilbert space.  A \emph{representation} of a
\(\Cst\)\nb-algebra~\(A\) is a nondegenerate
\Star{}homomorphism \(A\to\Bound(\Hils)\).  Since
\(\Bound(\Hils)=\Mult(\Comp(\Hils))\) and the nondegeneracy
conditions \(A\cdot\Comp(\Hils)=\Comp(\Hils)\) and
\(A\cdot\Hils=\Hils\) are equivalent, this is the same as a
morphism from~\(A\) to~\(\Comp(\Hils)\).

We write~\(\Flip\) for the tensor flip \(\Hils\otimes\Hils[K]\to
\Hils[K]\otimes\Hils\), \(x\otimes y\mapsto y\otimes x\), for two
Hilbert spaces \(\Hils\) and~\(\Hils[K]\).  We write~\(\flip\) for the
tensor flip isomorphism \(A\otimes B\to B\otimes A\) for two
\(\Cst\)\nb-algebras \(A\) and~\(B\).

\subsection{Crossed tensor products}
\label{sec:crossed_tensor}

\begin{definition}[compare~\cite{Woronowicz:Braided_Qnt_Grp}]
  \label{def:crossed_product}
  Let \(A\), \(B\), \(C\) be \(\Cst\)\nb-algebras,
  \(\alpha\in\Mor(A,C)\) and \(\beta\in\Mor(B,C)\).  If
  \(\alpha(A)\cdot\beta(B)=C\), then we call \((C,\alpha,\beta)\) a
  \emph{crossed product} or \emph{crossed tensor product} of \(A\)
  and~\(B\).
\end{definition}

\begin{example}
  \label{ex:tensor_product_as_crossed_prod}
  The spatial tensor product \(C=A\otimes B\) of two
  \(\Cst\)\nb-algebras with \(\alpha(a)=a\otimes 1_B\) and \(\beta(b)
  = 1_A\otimes b\) is the simplest example of a crossed product.
  Here \(1_A\in\Mult(A)\) and \(1_B\in\Mult(B)\).
\end{example}

Let \(\alpha\) and~\(\beta\) be (nondegenerate) representations of
\(A\) and~\(B\) on the same Hilbert space~\(\Hils\) such that
\(\alpha(A)\cdot\beta(B)\) and \(\beta(B)\cdot\alpha(A)\) are the
same subspace of~\(\Bound(\Hils)\).  Then \(C\defeq
\alpha(A)\cdot\beta(B)\) is a \(\Cst\)\nb-algebra,
\(\alpha\in\Mor(A, C)\) and \(\beta\in\Mor(B,C)\).  Thus~\(C\) is a
crossed product of \(A\) and~\(B\).  This suggests that crossed
products are defined by some commutation relations between
\(\alpha\) and~\(\beta\).  Analytic difficulties may, however,
prevent us from writing down such commutation relations explicitly
(see our discussion at the end of the introduction).

\begin{definition}
  \label{def:equiv_of_crossed_prod}
  An \emph{equivalence} between two crossed products
  \(C_1=\alpha_1(A)\cdot\beta_1(B)\) and
  \(C_2=\alpha_2(A)\cdot\beta_2(B)\) of \(A\) and~\(B\) is an
  isomorphism \(\varphi\colon C_1\to C_2\) with
  \(\varphi\circ\alpha_1=\alpha_2\) and
  \(\varphi\circ\beta_1=\beta_2\).
\end{definition}

Any faithful morphism \(\varphi\in\Mor(C_1,C_2)\) with
\(\varphi\circ\alpha_1=\alpha_2\) and \(\varphi\circ\beta_1=\beta_2\)
satisfies \(\varphi(C_1)=C_2\) and hence is an equivalence of crossed
products.

\begin{example}
  \label{ex:equiv_via_unitary}
  Let \(C=\alpha(A)\cdot\beta(B)\) be a crossed product and
  \(U\in\U(C)\).  Then
  \[
  (C,\alpha, \beta) \simeq (C,\Ad_{U}\circ\alpha,\Ad_{U}\circ\beta).
  \]
\end{example}

\subsection{Multiplicative unitaries and quantum groups}
\label{sec:multunit_quantum_groups}

\begin{definition}[\cite{Baaj-Skandalis:Unitaires}]
  \label{def:multunit}
  Let~\(\Hils\) be a Hilbert space.  A unitary
  \(\Multunit\in\U(\Hils\otimes\Hils)\) is \emph{multiplicative} if it
  satisfies the \emph{pentagon equation}
  \begin{equation}
    \label{eq:pentagon}
    \Multunit_{23}\Multunit_{12}
    = \Multunit_{12}\Multunit_{13}\Multunit_{23}
    \qquad
    \text{in \(\U(\Hils\otimes\Hils\otimes\Hils).\)}
  \end{equation}
\end{definition}

Technical assumptions such as manageability
(\cite{Woronowicz:Multiplicative_Unitaries_to_Quantum_grp}) or, more
generally, modularity (\cite{Soltan-Woronowicz:Remark_manageable}) are
needed in order to construct \(\Cst\)\nb-algebras out of a
multiplicative unitary.

\begin{theorem}[\cites{Soltan-Woronowicz:Remark_manageable,
    Soltan-Woronowicz:Multiplicative_unitaries,
    Woronowicz:Multiplicative_Unitaries_to_Quantum_grp}]
  \label{the:Cst_quantum_grp_and_mult_unit}
  Let~\(\Hils\) be a separable Hilbert space and
  \(\Multunit\in\U(\Hils\otimes\Hils)\) a modular
  multiplicative unitary.  Let
  \begin{alignat}{2}
    \label{eq:first_leg_slice}
    A &\defeq \{(\omega\otimes\Id_{\Hils})\Multunit :
    \omega\in\Bound(\Hils)_*\}^\CLS,\\
    \label{eq:second_leg_slice}
    \hat{A} &\defeq \{(\Id_{\Hils}\otimes\omega)\Multunit :
    \omega\in\Bound(\Hils)_*\}^\CLS.
  \end{alignat}
  \begin{enumerate}
  \item \(A\) and \(\hat{A}\) are separable, nondegenerate
    \(\Cst\)\nb-subalgebras of~\(\Bound(\Hils)\).
  \item \(\Multunit\in\U(\hat{A}\otimes
    A)\subseteq\U(\Hils\otimes\Hils)\).  We write~\(\multunit[A]\)
    for~\(\Multunit\) viewed as a unitary multiplier of
    \(\hat{A}\otimes A\) and call it \emph{reduced bicharacter}.
  \item There is a unique \(\Comult[A]\in\Mor(A,A\otimes A)\) such
    that
    \begin{equation}
      \label{eq:Comult_W}
      (\Id_{\hat{A}}\otimes \Comult[A])\multunit[A]
      = \multunit[A]_{12}\multunit[A]_{13}
      \qquad \text{in \(\U(\hat{A}\otimes A\otimes A)\);}
    \end{equation}
    it is \emph{coassociative}:
    \begin{equation}
      \label{eq:coassociative}
      (\Comult[A]\otimes\Id_A)\circ\Comult[A]
      = (\Id_A\otimes\Comult[A])\circ\Comult[A],
    \end{equation}
    and satisfies the \emph{Podle\'s condition}
    \begin{equation}
      \label{eq:Podles}
      \Comult[A](A)\cdot(1_A\otimes A)
      = A\otimes A
      = (A\otimes 1_A)\cdot\Comult[A](A).
    \end{equation}
  \item There is a unique ultraweakly continuous, linear
    anti-automorphism~\(\Coinv_A\) of~\(A\) with
    \begin{align}
      \label{eq:opp_comult_via_antipode}
      \Comult[A]\circ \Coinv_A &=
      \flip\circ(\Coinv_A\otimes \Coinv_A)\circ\Comult[A],
    \end{align}
    where \(\flip(x\otimes y)=y\otimes x\).  It satisfies
    \(\Coinv_A^2=\Id_A\).
  \end{enumerate}
\end{theorem}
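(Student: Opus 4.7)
The plan is to follow the construction of Soltan and Woronowicz in the cited papers, which turns any modular multiplicative unitary into a \(\Cst\)-quantum group; I would present it as four steps corresponding to the four numbered assertions, using only the pentagon equation~\eqref{eq:pentagon} and modularity of \(\Multunit\).

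\textbf{Step 1 (the \(\Cst\)-algebras).} First I would check that \(A\) and \(\hat A\) are closed under multiplication. Starting from \eqref{eq:pentagon} in the form \(\Multunit_{12}^*\Multunit_{23}\Multunit_{12}=\Multunit_{13}\Multunit_{23}\), I would slice the first leg by \(\omega\otimes\Id\otimes\Id\) to obtain \((\omega\otimes\Id)(\Multunit)\cdot(\eta\otimes\Id)(\Multunit)\in A\) for suitable \(\omega,\eta\), and symmetrically for \(\hat A\). Closedness under involution is not formal: it requires modularity, which produces the unitary antipode and hence implements an involution on the slice spaces. After this, separability follows from separability of \(\Hils\) and nondegeneracy from the equation \(\Multunit \cdot (1\otimes\Hils)=\Hils\otimes\Hils\) (which is a consequence of \(\Multunit\) being unitary together with the slice formulas).

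\textbf{Step 2 (localisation of~\(\Multunit\)).} Once~\(A\) and~\(\hat A\) are in place, I would show \(\Multunit\in\Mult(\hat A\otimes A)\) by verifying the two one-sided slice conditions defining a multiplier in \(\hat A\otimes A\): for any \(\omega\in\Bound(\Hils)_*\), the elements \((\omega\otimes\Id)\Multunit\) and \((\Id\otimes\omega)\Multunit\) lie in \(A\) and \(\hat A\) by definition, and the pentagon equation provides the approximate unit factorisations \(\hat A\cdot \hat A=\hat A\) and \(A\cdot A=A\) that allow one to conclude \(\Multunit\in\Mult(\hat A\otimes A)\). Unitarity is automatic.

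\textbf{Step 3 (comultiplication).} I would define \(\Comult[A]\) by the conjugation formula \(\Comult[A](a)\defeq \Multunit(a\otimes 1)\Multunit^*\) for \(a\in A\subseteq\Bound(\Hils)\), using \(\Multunit\in\U(\hat A\otimes A)\) to interpret the right-hand side in \(\Mult(A\otimes A)\). Uniqueness is immediate from~\eqref{eq:Comult_W}, since the slices \((\omega\otimes\Id\otimes\Id)\multunit[A]\) for \(\omega\) ranging over a total set in \(\hat A^*\) linearly span a dense subset of~\(A\). To verify~\eqref{eq:Comult_W} itself I would apply the pentagon equation: \((\Id\otimes\Comult[A])\multunit[A]=\multunit[A]_{23}\multunit[A]_{12}\multunit[A]_{23}^*=\multunit[A]_{12}\multunit[A]_{13}\), where the last equality is \eqref{eq:pentagon} rearranged. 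Coassociativity~\eqref{eq:coassociative} then drops out by applying \((\Id\otimes\Comult[A]\otimes\Id)\) and \((\Id\otimes\Id\otimes\Comult[A])\) to~\(\multunit[A]\) and comparing. The Podle\'s conditions~\eqref{eq:Podles} are the main analytic obstacle: proving \(\Comult[A](A)\cdot(1\otimes A)=A\otimes A\) requires a genuinely non-trivial density argument, and this is where modularity of~\(\Multunit\) is used most heavily, via the manageability-style estimates of Soltan-Woronowicz that let one invert \(\Multunit\)-conjugation on a suitable dense domain.

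\textbf{Step 4 (antipode).} Finally, the unitary antipode \(\Coinv_A\) I would construct by borrowing the scaling-group/unitary antipode decomposition associated to a manageable multiplicative unitary in Woronowicz's work: modularity yields a positive self-adjoint operator~\(Q\) on~\(\Hils\) intertwining \(\Multunit\) with a second unitary on the conjugate Hilbert space, and conjugation by the resulting antiunitary implements \(\Coinv_A\) on~\(A\). Ultraweak continuity and the identity \(\Coinv_A^2=\Id\) come from the same construction, and~\eqref{eq:opp_comult_via_antipode} follows by computing how this conjugation transforms \(\multunit[A]\) and then invoking uniqueness of~\(\Comult[A]\). I expect Step~1 (involutive closure) together with the Podle\'s density in Step~3 to be the principal technical hurdles; everything else is essentially algebraic manipulation with the pentagon equation.
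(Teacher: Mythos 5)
This theorem is not proved in the paper at all: it is quoted as background from the cited works of Woronowicz and Sołtan--Woronowicz, and your outline is a faithful summary of exactly that construction. You also correctly locate the genuinely non-formal steps (self-adjointness of the slice algebras, the Podle\'s condition, and the unitary antipode) in the modularity/manageability machinery rather than in the pentagon equation alone, so there is nothing to compare against beyond the references themselves.
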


A \emph{\(\Cst\)\nb-quantum group} is a \(\Cst\)\nb-bialgebra
\(\Qgrp{G}{A}\) constructed from a modular
multiplicative unitary.  We do not need Haar weights.

The \emph{dual} multiplicative unitary is \(\DuMultunit\defeq
\Flip\Multunit^*\Flip\in\U(\Hils\otimes\Hils)\), where
\(\Flip(x\otimes y)=y\otimes x\).  It is modular or manageable
if~\(\Multunit\) is.  The \(\Cst\)\nb-quantum group
\(\DuQgrp{G}{A}\) generated by~\(\DuMultunit\) is the \emph{dual}
of~\(\G\).  Its comultiplication is characterised by
\begin{equation}
  \label{eq:dual_Comult_W}
  (\DuComult[A]\otimes \Id_A)\multunit[A]
  = \multunit[A]_{23}\multunit[A]_{13}
  \qquad\text{in \(\U(\hat{A}\otimes \hat{A}\otimes A)\).}
\end{equation}

Let \(\Qgrp{G}{A}\) be a \(\Cst\)\nb-quantum group.

\begin{definition}
  \label{def:cont_coaction}
  A \emph{continuous \textup(right\textup) coaction} of~\(\G\) on a
  \(\Cst\)\nb-algebra~\(C\) is a morphism \(\gamma\colon C\to C\otimes
  A\) with the following properties:
  \begin{enumerate}
  \item \(\gamma\) is injective;
  \item \(\gamma\) is a comodule structure, that is,
    \((\Id_C\otimes\Comult[A])\gamma = (\gamma\otimes\Id_A)\gamma\):
    \begin{equation}
      \label{eq:right_coaction}
      \begin{tikzpicture}[baseline=(current bounding box.west)]
        \matrix(m)[cd,column sep=6em]{
          C&C\otimes A\\
          C\otimes A& C\otimes A\otimes A\\
        };
        \draw[cdar] (m-1-1) -- node {\(\gamma\)} (m-1-2);
        \draw[cdar] (m-1-1) -- node[swap] {\(\gamma\)} (m-2-1);
        \draw[cdar] (m-2-1) -- node {\(\gamma\otimes\Id_A\)} (m-2-2);
        \draw[cdar] (m-1-2) -- node {\(\Id_C\otimes\Comult[A]\)} (m-2-2);
      \end{tikzpicture}
    \end{equation}
  \item \(\gamma\) satisfies the \emph{Podleś condition}
    \(\gamma(C)\cdot(1_C\otimes A)=C\otimes A\).
  \end{enumerate}
  We call \((C,\gamma)\) a \emph{\(\G\)\nb-\(\Cst\)\nb-algebra}.  We
  often drop~\(\gamma\) from our notation.

  A morphism \(f\colon C\to D\) between two
  \(\G\)\nb-\(\Cst\)\nb-algebras \((C,\gamma)\) and \((D,\delta)\) is
  \emph{\(\G\)\nb-\hspace{0pt}equivariant} if \(\delta\circ f =
  (f\otimes\Id_A)\circ\gamma\).  Let \(\Mor^{\G}(C,D)\) be the set of
  \(\G\)\nb-equivariant morphisms from~\(C\) to~\(D\).  Let
  \(\Cstcat(\G)\) be the category with \(\G\)\nb-\(\Cst\)-algebras as
  objects and \(\G\)\nb-equivariant morphisms as arrows.
\end{definition}

\begin{example}
  \label{ex:continuous_coactions}
  The \emph{trivial} \(\G\)\nb-coaction on a \(\Cst\)\nb-algebra~\(C\)
  is \(\tau\colon C\to C\otimes A\), \(c\mapsto c\otimes 1_A\).  It is
  always continuous.
  Theorem~\ref{the:Cst_quantum_grp_and_mult_unit}.3 implies that
  \(\Comult[A]\colon A\to A\otimes A\) is a continuous \(\G\)\nb-coaction 
  on~\(A\) for any \(\Cst\)\nb-quantum group~\(\Qgrp{G}{A}\).  More generally,
  \(\Id_C\otimes\Comult[A]\colon C\otimes A\to C\otimes A\otimes A\)
  is a continuous \(\G\)\nb-coaction on \(C\otimes A\) for any
  \(\Cst\)\nb-algebra~\(C\).  The following lemma says that any
  continuous coaction may be embedded into one of this form.
\end{example}

\begin{lemma}
  \label{lemm:G_Cst_alg_indentific}
  Let~\(C\) be a \(\Cst\)\nb-algebra and~\(D\) a
  \(\Cst\)\nb-subalgebra of~\(\Mult(C\otimes A)\) with
  \begin{equation}
    \label{eq:G_cst_sub_alg_cond}
    (\Id_C\otimes\Comult[A])(D)\cdot (1_{C\otimes A}\otimes A)
    = D\otimes A.
  \end{equation}
  Then~\(D\) with the coaction \(\delta\defeq
  (\Id_C\otimes\Comult[A])|_D\colon D\to D\otimes A\) is a
  \(\G\)\nb-\(\Cst\)-algebra, and the embedding \(D\to\Mult(C\otimes
  A)\) is a \(\G\)\nb-equivariant morphism.

  Every \(\G\)\nb-\(\Cst\)-algebra is isomorphic to one of this form.
\end{lemma}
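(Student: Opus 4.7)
The plan is to carry out three tasks: verifying Definition~\ref{def:cont_coaction} for $(D,\delta)$, checking $\G$\nb-equivariance of the embedding $\iota\colon D\hookrightarrow\Mult(C\otimes A)$, and realising an arbitrary $\G$\nb-$\Cst$\nb-algebra in this form.

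For the first task, coassociativity of~$\delta$ is immediate from coassociativity of~$\Comult[A]$, since $\delta$ is the restriction of $\Id_C\otimes\Comult[A]$; injectivity is inherited from the injectivity of~$\Comult[A]$ (which holds because $\Comult[A]$ itself is a continuous coaction on~$A$, see Example~\ref{ex:continuous_coactions}); and the Podleś condition for~$\delta$ is exactly the hypothesis~\eqref{eq:G_cst_sub_alg_cond}. The analytical point is that $\delta$ takes values in $\Mult(D\otimes A)$, not merely $\Mult(C\otimes A\otimes A)$, and is nondegenerate there. I would write a generic $d'\otimes a\in D\otimes A$ as a limit of sums $\delta(d_i)\cdot(1_{C\otimes A}\otimes a_i)$ via~\eqref{eq:G_cst_sub_alg_cond}; then for $d\in D$,
\[
\delta(d)\cdot(d'\otimes a)
= \lim \delta(d\cdot d_i)\cdot(1_{C\otimes A}\otimes a_i) \in D\otimes A,
\]
and the companion statement on the other side follows by taking adjoints. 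Applying the same approximation to an approximate unit $(e_\lambda)$ of~$D$ shows that $\delta(e_\lambda)$ acts as an approximate unit on $D\otimes A$, so $\delta(D)\cdot(D\otimes A)=D\otimes A$.

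The equivariance of $\iota$ is tautological: both $(\Id_C\otimes\Comult[A])\circ\iota$ and $(\iota\otimes\Id_A)\circ\delta$ send $d\in D$ to $(\Id_C\otimes\Comult[A])(d)\in\Mult(C\otimes A\otimes A)$. For the identification, given any $(C',\gamma)\in\Cstcat(\G)$, I would apply the construction with $C'$ in place of~$C$ and $D\defeq\gamma(C')\subseteq\Mult(C'\otimes A)$. The coaction identity~\eqref{eq:right_coaction}, together with the nondegeneracy of $\gamma\otimes\Id_A$ (which sends $1_{C'}\otimes A$ to $1_{C'\otimes A}\otimes A$), gives
\[
(\Id_{C'}\otimes\Comult[A])(\gamma(C'))\cdot(1_{C'\otimes A}\otimes A)
= (\gamma\otimes\Id_A)\bigl(\gamma(C')\cdot(1_{C'}\otimes A)\bigr)
= \gamma(C')\otimes A,
\]
verifying~\eqref{eq:G_cst_sub_alg_cond}. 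Injectivity of~$\gamma$ upgrades it to a $\Cst$\nb-isomorphism $C'\congto D$, and coassociativity shows it intertwines $\gamma$ with the induced~$\delta$. The main obstacle is the analytic bookkeeping in the first task, specifically passing from the one-sided identity~\eqref{eq:G_cst_sub_alg_cond} to full bilateral nondegeneracy of $\delta\colon D\to D\otimes A$.
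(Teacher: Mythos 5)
Your proposal is correct and follows essentially the same route as the paper's proof: the same verification of injectivity, coassociativity and the Podleś condition for $\delta$, and the identical computation $(\Id_C\otimes\Comult[A])\gamma(C)\cdot(1_{C\otimes A}\otimes A)=(\gamma\otimes\Id_A)(\gamma(C)\cdot(1_C\otimes A))=\gamma(C)\otimes A$ for the converse. The only difference is that you spell out the approximation argument showing $\delta$ lands nondegenerately in $\Mult(D\otimes A)$, a step the paper asserts in one sentence as a consequence of~\eqref{eq:G_cst_sub_alg_cond}.
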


\begin{proof}
  Equation~\eqref{eq:G_cst_sub_alg_cond} implies that
  \(\Id_C\otimes\Comult[A]\) maps~\(D\) into \(\Mult(D\otimes A)\) as
  claimed.  The coaction~\(\delta\) is injective and coassociative
  because \(\Id_C\otimes\Comult[A]\) is, and
  \eqref{eq:G_cst_sub_alg_cond} is the Podle\'s condition
  for~\(\delta\).  Thus~\(\delta\) is a continuous \(\G\)\nb-coaction.
  The embedding is equivariant by construction.

  Now let \((C,\gamma)\) be a \(\G\)\nb-\(\Cst\)-algebra.  Let
  \(D\defeq\gamma(C)\subseteq\Mult(C\otimes A)\).  The comodule
  property~\eqref{eq:right_coaction} and the Podle\'s condition
  for~\(\gamma\) imply that~\(D\)
  satisfies~\eqref{eq:G_cst_sub_alg_cond}:
  \begin{multline*}
    (\Id_C\otimes\Comult[A])\gamma(C)\cdot(1_{C\otimes A}\otimes A)
    = (\gamma\otimes\Id_A)\bigl(\gamma(C)\cdot(1_C\otimes A)\bigr)
    \\ = (\gamma\otimes\Id_A)(C\otimes A)
    = \gamma(C)\otimes A.
  \end{multline*}
  The isomorphism \(\gamma\colon C\to D\) is \(\G\)\nb-equivariant by
  the comodule property~\eqref{eq:right_coaction} of~\(\gamma\).
\end{proof}

\begin{definition}
  \label{def:corepresentation}
  A (right) \emph{corepresentation} of~\(\G\) on a Hilbert
  space~\(\Hils\) is a unitary \(U\in\U(\Comp(\Hils)\otimes A)\)
  with
  \begin{equation}
    \label{eq:corep_cond}
    (\Id_{\Comp(\Hils)}\otimes\Comult[A]) U = U_{12}U_{13}
    \qquad\text{in }\U(\Comp(\Hils)\otimes A\otimes A).
  \end{equation}
\end{definition}

\begin{definition}
  \label{def:covariant_corep}
  A \emph{covariant representation} of \((C,\gamma,A)\) on a Hilbert
  space~\(\Hils\) is a pair consisting of a corepresentation
  \(U\in\U(\Comp(\Hils)\otimes A)\) and a representation
  \(\varphi\colon C\to\Bound(\Hils)\) that satisfy the covariance
  condition
  \begin{equation}
    \label{eq:covariant_corep}
    (\varphi\otimes\Id_A)\circ \gamma(c) =
    U(\varphi(c)\otimes 1_A)U^*
    \qquad\text{in }\U(\Comp(\Hils)\otimes A)
  \end{equation}
  for all \(c\in C\).  A covariant representation is called
  \emph{faithful} if~\(\varphi\) is faithful.
\end{definition}

\subsection{Universal quantum groups}
\label{sec:univ_qgr}

The universal quantum group \(\G^\univ\defeq
(A^\univ,\Comult[A^\univ])\) associated to \(\Qgrp{G}{A}\) is
introduced in~\cite{Soltan-Woronowicz:Multiplicative_unitaries}.  By
construction, it comes with a \emph{reducing map} \(\Lambda\colon
A^\univ\to A\) and a universal bicharacter \(\dumaxcorep[A]\in
\U(\hat{A}\otimes A^\univ)\).  This may also be characterised as the
unique bicharacter in~\(\U(\hat{A}\otimes A^\univ)\) that lifts
\(\multunit[A]\in \U(\hat{A}\otimes A)\) in the sense that
\(\Id_{\hat{A}}\otimes\Lambda(\dumaxcorep[A])=\multunit[A]\).

Similarly, there are unique bicharacters in
\(\U(\hat{A}^\univ\otimes A)\) and \(\U(\hat{A}^\univ\otimes
A^\univ)\) that lift \(\multunit[A]\in \U(\hat{A}\otimes A)\); the
latter is constructed in~\cite{Meyer-Roy-Woronowicz:Homomorphisms}.

The universality of \(\maxcorep[A]\in\U(\hat{A}^\univ\otimes A)\)
implies that for any corepresentation~\(U^{\Hils}\) of~\(\G\) on a
Hilbert space (or Hilbert module)~\(\Hils\), there is a unique
representation \(\rho\colon\hat{A}^\univ\to\Bound(\Hils)\) with
\((\rho\otimes\Id_A)\maxcorep[A]=U^{\Hils}\).

\subsection{Bicharacters as quantum group morphisms}
\label{sec:bicharacters_morphisms}

Let \(\Qgrp{G}{A}\) and \(\Qgrp{H}{B}\) be \(\Cst\)\nb-quantum groups.
Let \(\DuQgrp{G}{A}\) and \(\DuQgrp{H}{B}\) be their duals.

\begin{definition}[\cite{Meyer-Roy-Woronowicz:Homomorphisms}*{Definition
    16}]
  \label{def:bicharacter}
  A \emph{bicharacter from \(\G\) to~\(\DuG[H]\)} is a unitary
  \(\bichar\in\U(\hat{A}\otimes \hat{B})\) with
  \begin{alignat}{2}
    \label{eq:bichar_char_in_first_leg}
    (\DuComult[A]\otimes\Id_{\hat{B}})\bichar
    &=\bichar_{23}\bichar_{13}
    &\qquad &\text{in }
    \U(\hat{A}\otimes\hat{A}\otimes \hat{B}),\\
    \label{eq:bichar_char_in_second_leg}
    (\Id_{\hat{A}}\otimes\DuComult[B])\bichar
    &=\bichar_{12}\bichar_{13}
    &\qquad &\text{in }
    \U(\hat{A}\otimes \hat{B}\otimes \hat{B}).
  \end{alignat}
\end{definition}

Bicharacters in \(\U(\hat{A}\otimes B)\) are interpreted as quantum
group morphisms from~\(\G\) to~\(\G[H]\)
in~\cite{Meyer-Roy-Woronowicz:Homomorphisms}.  We shall use
bicharacters in \(\U(\hat{A}\otimes \hat{B})\) throughout and rewrite
some definitions in~\cite{Meyer-Roy-Woronowicz:Homomorphisms} in this
setting.

\begin{definition}
  \label{def:right_quantum_morphism}
  A \emph{right quantum group morphism} from~\(\G\) to~\(\DuG[H]\) is
  a morphism \(\Delta_R\colon A\to A\otimes\hat{B}\) such that the
  following diagrams commute:
  \begin{equation}
    \label{eq:right_homomorphism}
    \begin{tikzpicture}[baseline=(current bounding box.west)]
      \matrix(m)[cd,column sep=4em]{
        A&A\otimes \hat{B}\\
        A\otimes A& A\otimes A\otimes \hat{B}\\
      };
      \draw[cdar] (m-1-1) -- node {\(\Delta_R\)} (m-1-2);
      \draw[cdar] (m-1-1) -- node[swap] {\(\Comult[A]\)} (m-2-1);
      \draw[cdar] (m-1-2) -- node {\(\Comult[A]\otimes\Id_{\hat{B}}\)} (m-2-2);
      \draw[cdar] (m-2-1) -- node[swap] {\(\Id_A\otimes\Delta_R\)} (m-2-2);
    \end{tikzpicture}
    \quad
    \begin{tikzpicture}[baseline=(current bounding box.west)]
      \matrix(m)[cd,column sep=4em]{
        A&A\otimes \hat{B}\\
        A\otimes \hat{B}& A\otimes \hat{B}\otimes \hat{B}\\
      };
      \draw[cdar] (m-1-1) -- node {\(\Delta_R\)} (m-1-2);
      \draw[cdar] (m-1-1) -- node[swap] {\(\Delta_R\)} (m-2-1);
      \draw[cdar] (m-2-1) -- node[swap] {\(\Delta_R\otimes\Id_{\hat{B}}\)} (m-2-2);
      \draw[cdar] (m-1-2) -- node {\(\Id_A\otimes\DuComult[B]\)} (m-2-2);
    \end{tikzpicture}
  \end{equation}
\end{definition}

The following theorem summarises some of the main results
of~\cite{Meyer-Roy-Woronowicz:Homomorphisms}.

\begin{theorem}
  \label{the:equivalent_notion_of_homomorphisms}
  There are natural bijections between the following sets:
  \begin{enumerate}
  \item bicharacters \(\bichar\in\U(\hat{A}\otimes\hat{B})\)
    from~\(\G\) to~\(\DuG[H]\);
  \item bicharacters \(\Dubichar\in\U(\hat{B}\otimes\hat{A})\)
    from~\(\G[H]\) to~\(\DuG\);
  \item right quantum group homomorphisms \(\Delta_R\colon A\to
    A\otimes \hat{B}\);
  \item functors \(F\colon\Cstcat(\G)\to\Cstcat(\DuG[H])\) with
    \(\Forget_{\DuG[H]}\circ F=\Forget_{\G}\) for the forgetful
    functor \(\Forget_{\G}\colon\Cstcat(\G)\to\Cstcat\);
  \item Hopf \Star{}homomorphisms \(f\colon A^\univ\to\hat{B}^\univ\)
    between universal quantum groups;
  \item bicharacters
    \(\bichar^\univ\in\U(\hat{A}^\univ\otimes\hat{B}^\univ)\).
  \end{enumerate}
  The first bijection maps a bicharacter~\(\bichar\) to
  \begin{equation}
    \label{eq:dual_bicharacter}
    \Dubichar\defeq\flip(\bichar^*).
  \end{equation}
  A bicharacter~\(\bichar\) and a right quantum group
  homomorphism~\(\Delta_R\) determine each other uniquely via
  \begin{equation}
    \label{eq:def_V_via_right_homomorphism}
    (\Id_{\hat{A}} \otimes \Delta_R)(\multunit[A])
    = \multunit[A]_{12}\bichar_{13}.
  \end{equation}
  The functor~\(F\) associated to~\(\Delta_R\) is the unique one that
  maps \((A,\Comult[A])\) to \((A,\Delta_R)\).  In general, \(F\) maps
  a continuous \(\G\)\nb-coaction \(\gamma\colon C\to C\otimes A\) to
  the unique \(\DuG[H]\)\nb-coaction \(\delta\colon C\to C\otimes
  \hat{B}\) for which the following diagram commutes:
  \begin{equation}
    \label{eq:right_quantum_group_homomorphism_as_fucntor}
    \begin{tikzpicture}[baseline=(current bounding box.west)]
      \matrix(m)[cd,column sep=4.5em]{
        C&C\otimes A\\
        C\otimes\hat{B}& C\otimes A\otimes \hat{B}\\
      };
      \draw[cdar] (m-1-1) -- node {\(\gamma\)} (m-1-2);
      \draw[cdar] (m-1-1) -- node[swap] {\(\delta\)} (m-2-1);
      \draw[cdar] (m-1-2) -- node {\(\Id_C\otimes\Delta_R\)} (m-2-2);
      \draw[cdar] (m-2-1) -- node[swap] {\(\gamma\otimes\Id_{\hat{B}}\)} (m-2-2);
    \end{tikzpicture}
  \end{equation}
  The bicharacter in \(\U(\hat{A}\otimes\hat{B})\)
  associated to a Hopf \Star{}homomorphism \(f\colon A^\univ\to
  \hat{B}^\univ\) is \(\bichar \defeq (\Id_{\hat{A}}\otimes
  \Lambda_{\hat{B}}f)(\dumaxcorep[A])\), where
  \(\dumaxcorep[A]\in\U(\hat{A}\otimes A^\univ)\) is the
  unique bicharacter lifting \(\multunit[A]\in
  \U(\hat{A}\otimes A)\) and \(\Lambda_{\hat{B}}\colon
  \hat{B}^\univ\to\hat{B}\) is the reducing map.
\end{theorem}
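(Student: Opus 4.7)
My plan is to deduce (1)$\Leftrightarrow$(2)$\Leftrightarrow$(3)$\Leftrightarrow$(5)$\Leftrightarrow$(6) from the main results of \cite{Meyer-Roy-Woronowicz:Homomorphisms}, after translating from that paper's setting of bicharacters in $\U(\hat A\otimes B)$ to the present setting of bicharacters in $\U(\hat A\otimes\hat B)$. The substantially new content is the functorial characterisation (3)$\Leftrightarrow$(4), where I would spend the real effort.

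For the linear chain: the equivalence (1)$\Leftrightarrow$(2) via $\Dubichar\defeq\flip(\bichar^*)$ is a direct verification, since the map is manifestly involutive and taking adjoints combined with the appropriate flip exchanges \eqref{eq:bichar_char_in_first_leg} and \eqref{eq:bichar_char_in_second_leg} with their counterparts for $\Dubichar$. For (1)$\Leftrightarrow$(3), I would take \eqref{eq:def_V_via_right_homomorphism} as a dictionary and extract $\Delta_R$ from $\bichar$ by slicing $\multunit[A]_{12}\bichar_{13}$ in the first leg; existence of a unique morphism $\Delta_R\in\Mor(A,A\otimes\hat B)$ obeying that identity relies on the density \eqref{eq:first_leg_slice} together with the bicharacter conditions on $\bichar$, and the two coherence diagrams \eqref{eq:right_homomorphism} translate, via \eqref{eq:Comult_W} and \eqref{eq:dual_Comult_W}, into the two bicharacter identities on $\bichar$. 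The equivalence (1)$\Leftrightarrow$(6) is the existence and uniqueness of a lift of $\bichar$ to a universal bicharacter $\bichar^\univ$ on $\hat A^\univ\otimes\hat B^\univ$, proved in \cite{Meyer-Roy-Woronowicz:Homomorphisms}, and (5)$\Leftrightarrow$(6) is the correspondence between Hopf $^*$-homomorphisms and universal bicharacters from the same reference. The displayed formula $\bichar\defeq(\Id_{\hat A}\otimes\Lambda_{\hat B}f)(\dumaxcorep[A])$ then records the composite passage from $f$ through (5)$\to$(6)$\to$(1).

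For (3)$\Leftrightarrow$(4), given $\Delta_R$ I would define $F$ on an object $(C,\gamma)$ by the pair $(C,\delta)$, with $\delta\in\Mor(C,C\otimes\hat B)$ the unique morphism making \eqref{eq:right_quantum_group_homomorphism_as_fucntor} commute. Uniqueness is immediate from injectivity of $\gamma$. Existence requires showing that $(\Id_C\otimes\Delta_R)\gamma(c)$ lies in $(\gamma\otimes\Id_{\hat B})(\Mult(C\otimes\hat B))$ for all $c\in C$; this is a factorisation statement that combines the Podle\'s condition for $\gamma$ with the first coherence diagram of $\Delta_R$. Coassociativity of $\delta$ then follows from the second coherence of $\Delta_R$, and Podle\'s for $\delta$ is extracted from Podle\'s for $\gamma$ together with the density of $\Delta_R(A)\cdot(1_A\otimes\hat B)$ in $A\otimes\hat B$. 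Since a $\G$-equivariant morphism automatically intertwines the induced coactions, $F$ acts as the identity on morphisms and compatibility with the forgetful functors is automatic. Conversely, from a functor $F$ I would set $\Delta_R\defeq F(\Comult[A])$, viewed as a $\DuG[H]$-coaction on $A$; functoriality applied to the $\G$-equivariant morphism $\Comult[A]\colon A\to A\otimes A$ (where $A\otimes A$ carries $\Id_A\otimes\Comult[A]$) recovers the first coherence diagram in \eqref{eq:right_homomorphism}, and the comodule property of $\Delta_R$ on $A$ provides the second.

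The main obstacle is the analytic construction of $\delta$ from $\Delta_R$: the diagrammatic condition only pins $\delta(c)$ down uniquely, and exhibiting it as an element of $\Mult(C\otimes\hat B)$ requires the factorisation through the Podle\'s densities of $\gamma$ and $\Delta_R$ sketched above. The same circle of ideas is needed once more to verify the Podle\'s condition for $\delta$ itself; once these are in place, the remaining verifications reduce to routine diagram chases.
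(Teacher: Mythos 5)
The paper offers no proof of this theorem: it is introduced with the sentence ``The following theorem summarises some of the main results of \cite{Meyer-Roy-Woronowicz:Homomorphisms}'' and is simply quoted, so there is no internal argument to compare your proposal against. That said, your plan is consistent with how the cited reference actually establishes these equivalences: the dictionary \eqref{eq:def_V_via_right_homomorphism} between $\bichar$ and $\Delta_R$, the unique lifting to universal bicharacters for (1)$\Leftrightarrow$(6), the correspondence with Hopf \Star{}homomorphisms for (5)$\Leftrightarrow$(6), and the construction of $\delta$ from $\Delta_R$ via the commuting square \eqref{eq:right_quantum_group_homomorphism_as_fucntor} are all the right mechanisms, and your identification of the analytic crux (factoring $(\Id_C\otimes\Delta_R)\gamma(c)$ through $\gamma\otimes\Id_{\hat{B}}$ using the Podle\'s conditions) is on target.

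There is, however, a real gap in the part where you invest the most effort, (3)$\Leftrightarrow$(4). Your two constructions each produce a map between the sets in (3) and (4), but a bijection also requires that they be mutually inverse, and the nontrivial half of that is showing that a functor $F$ with $\Forget_{\DuG[H]}\circ F=\Forget_{\G}$ is completely determined by its value on the single object $(A,\Comult[A])$; this needs the equivariant embedding of an arbitrary $(C,\gamma)$ into $\Mult(C\otimes A)$ with coaction $\Id_C\otimes\Comult[A]$ from Lemma~\ref{lemm:G_Cst_alg_indentific}, together with an argument that $F$ must send this ambient object to one with coaction $\Id_C\otimes\Delta_R$. Relatedly, your derivation of the first coherence square in \eqref{eq:right_homomorphism} from functoriality applied to $\Comult[A]\colon (A,\Comult[A])\to(A\otimes A,\Id_A\otimes\Comult[A])$ already presupposes that $F(A\otimes A,\Id_A\otimes\Comult[A])=(A\otimes A,\Id_A\otimes\Delta_R)$, which is precisely the point at issue: a functor commuting with the forgetful functor is not a priori compatible with tensoring by a fixed $\Cst$\nb-algebra, and this compatibility has to be proved, not assumed. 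These steps are carried out in \cite{Meyer-Roy-Woronowicz:Homomorphisms}; as written, your sketch silently assumes them.
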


\section{Heisenberg pairs and twisted tensor products}
\label{sec:Heisenberg_pair}

This section introduces Heisenberg and anti-Heisenberg pairs
and uses them to construct our noncommutative tensor product,
after establishing properties of Heisenberg pairs necessary for
that purpose.

Let \(\Qgrp{G}{A}\) and~\(\Qgrp{H}{B}\) be \(\Cst\)\nb-quantum
groups.  Let \(\multunit[A]\in\U(\hat{A}\otimes A)\) and
\(\multunit[B]\in\U(\hat{B}\otimes B)\) be their reduced
bicharacters.  Let \(\bichar\in\U(\hat{A}\otimes\hat{B})\)
be a bicharacter from \(A\) to~\(\hat{B}\).  Heisenberg pairs
and anti-Heisenberg pairs are pairs of
representations~\((\alpha,\beta)\) of \(\hat{A}\) and~\(\hat{B}\)
on the same Hilbert space~\(\Hils\) that satisfy suitable
commutation relation.

We use these pairs to define twisted tensor products
\(C\boxtimes_\bichar D\) in Section~\ref{sec:boxtimes_via_Heisenberg}.
A crucial technical point is to show that a pair of representations of
\(C\) and~\(D\) generates a crossed product \(\Cst\)\nb-algebra.  Here
the commutativity result in Section~\ref{sec:commutativity} is
crucial.  In addition, we construct examples of
\(\bichar\)\nb-Heisenberg and \(\bichar\)\nb-anti-Heisenberg pairs,
thus proving their existence, and characterise them in equivalent
ways.

\begin{definition}
  \label{def:V-Heisenberg_pair}
  A pair of representations \(\alpha\colon A\to\Bound(\Hils)\),
  \(\beta\colon B\to\Bound(\Hils)\) is called a
  \emph{\(\bichar\)\nb-Heisenberg pair} or briefly
  \emph{Heisenberg pair} if
  \begin{equation}
    \label{eq:V-Heisenberg_pair}
    \multunit[A]_{1\alpha}\multunit[B]_{2\beta}
    =\multunit[B]_{2\beta}\multunit[A]_{1\alpha} \bichar_{12}
    \qquad\text{in }\U(\hat{A}\otimes\hat{B}\otimes\Comp(\Hils));
  \end{equation}
  here \(\multunit[A]_{1\alpha} \defeq
  ((\Id_{\hat{A}}\otimes\alpha)\multunit[A])_{13}\) and
  \(\multunit[B]_{2\beta} \defeq
  ((\Id_{\hat{B}}\otimes\beta)\multunit[B])_{23}\).  It is
  called a \emph{\(\bichar\)\nb-anti-Heisenberg pair} or
  briefly \emph{anti-Heisenberg pair} if
  \begin{equation}
    \label{eq:V-anti-Heisenberg_pair}
    \multunit[B]_{2\beta}\multunit[A]_{1\alpha}
    =\bichar_{12}\multunit[A]_{1\alpha}\multunit[B]_{2\beta}
    \qquad\text{in \(\U(\hat{A}\otimes\hat{B}\otimes\Comp(\Hils))\),}
  \end{equation}
  with similar conventions as above.
\end{definition}

We name these pairs after Heisenberg because of the following example:

\begin{example}
  \label{exa:Heisenberg_pair_name}
  Let \(A=B=\Cst(\R)\) be the group~\(\R\) viewed as a quantum group,
  and let \(\bichar\in\U(\hat{A}\otimes\hat{B})\cong
  \Cont(\R\times\R,\T)\) be the standard bicharacter \((s,t)\mapsto
  \exp(\ima st)\).  A pair of representations of~\(A\) is a pair of
  unitary one-parameter groups \((U_1(s),U_2(t))_{s,t\in\R}\).
  Equation~\eqref{eq:V-Heisenberg_pair} is equivalent to the canonical
  commutation relation in the \emph{Weyl form}:
  \[
  U_2(t)U_1(s) =\exp(-\ima st) U_1(s)U_2(t)\qquad
  \text{for all } s,t\in\R.
  \]
\end{example}

The case where \(\G[H]=\DuG\) and
\(\bichar=\multunit[A]\in\U(\hat{A}\otimes A)\) is the reduced
bicharacter of~\(\G\) is particularly interesting:

\begin{definition}
  \label{def:G_Heisenberg_pair}
  A \(\multunit[A]\)\nb-Heisenberg or
  \(\multunit[A]\)\nb-anti-Heisenberg pair is also called a
  \emph{\(\G\)\nb-Heisenberg pair} or
  \emph{\(\G\)\nb-anti-Heisenberg pair}, respectively.
\end{definition}

\begin{lemma}
  \label{lem:Heisenberg_pair_pentagon}
  A pair of representations~\((\pi,\hat{\pi})\) of \(A\)
  and~\(\hat{A}\) on~\(\Hils\) is a \(\G\)\nb-Heisenberg pair
  if and only if
  \begin{equation}
    \label{eq:Heisenberg_pair}
    \multunit[A]_{\hat{\pi}3}\multunit[A]_{1\pi}
    = \multunit[A]_{1\pi}\multunit[A]_{13}\multunit[A]_{\hat{\pi}3}
    \qquad\text{in }\U(\hat{A}\otimes\Comp(\Hils)\otimes A).
  \end{equation}
  It is a \(\G\)\nb-anti-Heisenberg pair if and only if
  \begin{equation}
    \label{eq:anti-Heisenberg_pair}
    \multunit[A]_{1\pi}\multunit[A]_{\hat{\pi}3}
    = \multunit[A]_{\hat{\pi}3}\multunit[A]_{13}\multunit[A]_{1\pi}
    \qquad\text{in }\U(\hat{A}\otimes\Comp(\Hils)\otimes A).
  \end{equation}
\end{lemma}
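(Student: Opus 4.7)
The strategy is to unfold Definition~\ref{def:V-Heisenberg_pair} in the special case $\mathbb H=\DuG$, $\alpha=\pi$, $\beta=\hat\pi$, $\bichar=\multunit[A]$, and then rearrange the resulting identity into~\eqref{eq:Heisenberg_pair}. Substituting these data into~\eqref{eq:V-Heisenberg_pair} (and using $\hat B=\hat{\hat A}=A$) yields
\[
((\Id_{\hat A}\otimes\pi)\multunit[A])_{13}\cdot((\Id_A\otimes\hat\pi)\multunit[\hat A])_{23}
=((\Id_A\otimes\hat\pi)\multunit[\hat A])_{23}\cdot((\Id_{\hat A}\otimes\pi)\multunit[A])_{13}\cdot\multunit[A]_{12}
\]
in $\U(\hat A\otimes A\otimes\Comp(\Hils))$. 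This has the ``right shape'' of~\eqref{eq:Heisenberg_pair} but features $\multunit[\hat A]$ instead of $\multunit[A]$ and lives in the wrong triple tensor product.

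The computational heart of the proof is the identity $\multunit[\hat A]=\flip(\multunit[A]^*)$, which is immediate from $\DuMultunit=\Flip\Multunit^*\Flip$. Combined with the naturality of the flip, this gives $(\Id_A\otimes\hat\pi)\multunit[\hat A]=\flip\bigl((\hat\pi\otimes\Id_A)\multunit[A]^*\bigr)$ in $\U(A\otimes\Comp(\Hils))$. I would then conjugate the whole Heisenberg equation by the tensor flip exchanging the second and third legs, moving it into $\U(\hat A\otimes\Comp(\Hils)\otimes A)$. Under this transport, the three factors above become precisely $\multunit[A]_{1\pi}$, $(\multunit[A]_{\hat\pi3})^*$, and $\multunit[A]_{13}$, respectively, so that the equation reads
\[
\multunit[A]_{1\pi}\cdot(\multunit[A]_{\hat\pi3})^*
=(\multunit[A]_{\hat\pi3})^*\cdot\multunit[A]_{1\pi}\cdot\multunit[A]_{13}.
\]

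What remains is a purely algebraic rearrangement: multiplying on the left and on the right by $\multunit[A]_{\hat\pi3}$ gives exactly~\eqref{eq:Heisenberg_pair}. Since every manipulation is invertible (multiplication by unitaries, conjugation by a flip), this proves both directions of the equivalence in one go. The anti-Heisenberg case, i.e., the equivalence between~\eqref{eq:V-anti-Heisenberg_pair} and~\eqref{eq:anti-Heisenberg_pair}, follows from the very same chain of manipulations applied to the anti-Heisenberg defining relation. The main obstacle is therefore not conceptual but notational: one must carefully track which leg of each two-legged unitary ends up in which slot of the three-fold tensor product, and apply the flip identity to the factor involving $\multunit[\hat A]$ without error. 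As a sanity check, taking $\pi=\Id_A$ and $\hat\pi=\Id_{\hat A}$ on the defining Hilbert space reduces~\eqref{eq:Heisenberg_pair} to the pentagon equation~\eqref{eq:pentagon} for~$\Multunit$ itself, which is reassuring.
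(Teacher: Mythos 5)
Your proposal is correct and follows essentially the same route as the paper: the paper's proof also hinges on applying $\flip_{23}$ together with the identity $\Dumultunit[A]=\flip((\multunit[A])^*)$ and then rearranging by unitarity, merely running the computation from \eqref{eq:Heisenberg_pair} to \eqref{eq:V-Heisenberg_pair} instead of the other way around and noting reversibility, exactly as you do.
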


\begin{proof}
  Let \(\pi\) and~\(\hat{\pi}\) be representations of \(A\)
  and~\(\hat{A}\) on~\(\Hils\)
  satisfying~\eqref{eq:Heisenberg_pair}.  When we
  apply~\(\flip_{23}\) to both sides
  of~\eqref{eq:Heisenberg_pair} we get
  \[
  (\Dumultunit[A]_{2\hat{\pi}})^*\multunit[A]_{1\pi}
  = \multunit[A]_{1\pi}\multunit[A]_{12}(\Dumultunit[A]_{2\hat{\pi}})^*
  \qquad\text{in }\U(\hat{A}\otimes A\otimes\Comp(\Hils)).
  \]
  This is equivalent to
  \(\multunit[A]_{1\pi}\Dumultunit[A]_{2\hat{\pi}} =
  \Dumultunit[A]_{2\hat{\pi}}\multunit[A]_{1\pi}\multunit[A]_{12}\),
  which is~\eqref{eq:V-Heisenberg_pair} for \(\hat{B}=A\),
  \(\bichar=\multunit[A]\), \(\alpha=\pi\) and
  \(\beta=\hat{\pi}\).  This computation may be reversed as
  well.

  The computation for anti-Heisenberg pairs is similar.
\end{proof}

\begin{example}
  \label{exa:Heisenberg_from_Multunit}
  Let \(\Multunit\in\U(\Hils\otimes\Hils)\) be a modular
  multiplicative unitary generating~\(\G\).  Thus there are faithful
  representations \(\pi\colon A\to \Bound(\Hils)\) and \(\hat\pi\colon
  \hat{A}\to \Bound(\Hils)\) with \(\Multunit =
  (\hat\pi\otimes\pi)(\multunit[A])\).  They form a
  \(\G\)\nb-Heisenberg pair: the condition~\eqref{eq:Heisenberg_pair}
  is equivalent to the pentagon equation~\eqref{eq:pentagon}
  for~\(\Multunit\).  Conversely, a pair~\((\pi,\hat\pi)\) of faithful
  representations is a \(\G\)\nb-Heisenberg pair if and only if
  \((\hat\pi\otimes\pi)(\multunit[A])\) is a multiplicative unitary.
\end{example}

Let~\(\conj{\Hils}\) be the conjugate Hilbert space to the Hilbert
space~\(\Hils\).  The \emph{transpose} of an operator
\(x\in\Bound(\Hils)\) is the operator
\(x^\transpose\in\Bound(\conj{\Hils})\) defined by
\(x^\transpose(\conj{\xi}) \defeq \conj{x^*\xi}\) for all
\(\xi\in\Hils\).  The transposition is a linear, involutive
anti-automorphism \(\Bound(\Hils)\to\Bound(\conj{\Hils})\).  The
unitary antipode \(\Coinv_A\colon A\to A\) is also a linear,
involutive anti-automorphism (see
Theorem~\ref{the:Cst_quantum_grp_and_mult_unit}).  Therefore, if
\(\alpha\colon A\to\Bound(\Hils)\) and \(\beta\colon
B\to\Bound(\Hils)\) are representations, then so are
\begin{alignat*}{2}
  \tilde\alpha&\colon A\to\Bound(\conj{\Hils}),&\qquad
  a&\mapsto (\Coinv_A(a))^\transpose,\\
  \tilde\beta&\colon B\to\Bound(\conj{\Hils}),&\qquad
  b&\mapsto (\Coinv_B(b))^\transpose.
\end{alignat*}

\begin{lemma}
  \label{lem:Heisenberg_vs_anti-Heisenberg}
  The pair \((\alpha,\beta)\) is Heisenberg if and only
  if \((\tilde\alpha,\tilde\beta)\) is anti-Heisenberg.
\end{lemma}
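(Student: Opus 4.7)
The approach is to realise the passage from Heisenberg to anti-Heisenberg pairs via a single linear \(*\)-anti-homomorphism. Each of the unitary antipodes \(\Coinv_{\hat A}\), \(\Coinv_{\hat B}\) and the transposition \(x\mapsto x^\transpose\) is an involutive linear \(*\)-anti-automorphism, so the map
\[
\Psi \defeq \Coinv_{\hat A}\otimes\Coinv_{\hat B}\otimes((\cdot)^\transpose)\colon \hat A\otimes\hat B\otimes\Bound(\Hils)\to \hat A\otimes\hat B\otimes\Bound(\conj{\Hils})
\]
is again a \(*\)-anti-homomorphism, because the tensor product of \(*\)-anti-homomorphisms reverses multiplication (a routine check on simple tensors). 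My plan is to apply \(\Psi\) to the Heisenberg equation~\eqref{eq:V-Heisenberg_pair}. Because \(\Psi\) reverses products, it becomes
\[
\Psi((\multunit[B])_{2\beta})\,\Psi((\multunit[A])_{1\alpha}) = \Psi(\bichar_{12})\,\Psi((\multunit[A])_{1\alpha})\,\Psi((\multunit[B])_{2\beta}),
\]
so it suffices to identify \(\Psi((\multunit[A])_{1\alpha})=(\multunit[A])_{1\tilde\alpha}\), \(\Psi((\multunit[B])_{2\beta})=(\multunit[B])_{2\tilde\beta}\) and \(\Psi(\bichar_{12})=\bichar_{12}\); the transformed equation is then exactly the anti-Heisenberg condition~\eqref{eq:V-anti-Heisenberg_pair} for \((\tilde\alpha,\tilde\beta)\).

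The first identification rests on the well-known invariance \((\Coinv_{\hat A}\otimes\Coinv_A)(\multunit[A])=\multunit[A]\) for a modular multiplicative unitary, expressing that the unitary antipode of~\(\G\) is dual to that of~\(\DuG\). Combined with \(\Coinv_A^2=\Id_A\), it gives \((\Coinv_{\hat A}\otimes\Id_A)(\multunit[A])=(\Id_{\hat A}\otimes\Coinv_A)(\multunit[A])\), and hence
\[
\Psi((\multunit[A])_{1\alpha}) = \bigl((\Coinv_{\hat A}\otimes((\cdot)^\transpose\circ\alpha))(\multunit[A])\bigr)_{13} = \bigl((\Id_{\hat A}\otimes\tilde\alpha)(\multunit[A])\bigr)_{13} = (\multunit[A])_{1\tilde\alpha}.
\]
The computation for \(\Psi((\multunit[B])_{2\beta})\) is analogous. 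The identity \(\Psi(\bichar_{12})=\bichar_{12}\) reduces to \((\Coinv_{\hat A}\otimes\Coinv_{\hat B})(\bichar)=\bichar\); this in turn follows from Theorem~\ref{the:equivalent_notion_of_homomorphisms}: the bicharacter corresponds to a Hopf \(*\)-homomorphism \(A^\univ\to\hat B^\univ\) of universal quantum groups, and any such morphism automatically intertwines the antipodes of source and target, so applying \(\Coinv\otimes\Coinv\) leaves the associated bicharacter unchanged.

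Combining the three identifications produces the anti-Heisenberg condition for \((\tilde\alpha,\tilde\beta)\), giving the forward implication. The converse is immediate from involutivity: \(\tilde{\tilde\alpha}=\alpha\) and \(\tilde{\tilde\beta}=\beta\) because \(\Coinv_A^2=\Id\) and \((x^\transpose)^\transpose=x\), so applying the forward implication to \((\tilde\alpha,\tilde\beta)\) yields the converse. The main obstacle is only the verification of the two antipode-invariance identities for \(\multunit[A]\) and~\(\bichar\); both are standard consequences of~\eqref{eq:opp_comult_via_antipode} and the universality arguments of Section~\ref{sec:univ_qgr}.
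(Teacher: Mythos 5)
Your proposal is correct and follows essentially the same route as the paper: apply the antimultiplicative map \(\Coinv_{\hat A}\otimes\Coinv_{\hat B}\otimes\transpose\) to the Heisenberg relation and use the invariance \((\Coinv\otimes\Coinv)\chi=\chi\) of bicharacters (which the paper cites from \cite{Meyer-Roy-Woronowicz:Homomorphisms}*{Proposition 3.10} and applies uniformly to \(\bichar\), \(\multunit[A]\) and \(\multunit[B]\)) to identify the transformed terms. The only cosmetic differences are that you justify the invariance of \(\bichar\) via the universal Hopf \Star{}homomorphism picture rather than by citation, and you obtain the converse from involutivity of \(\tilde{\phantom{\alpha}}\) where the paper invokes bijectivity of the anti-automorphism.
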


\begin{proof}
  Let \((\alpha,\beta)\) be a pair of representations.  We have
  \((\Coinv_{\hat{A}}\otimes \Coinv_{\hat{B}})\bichar =\bichar\) for
  any bicharacter in \(\U(\hat{A}\otimes\hat{B})\) by
  \cite{Meyer-Roy-Woronowicz:Homomorphisms}*{Proposition 3.10}.  In
  particular, this applies to \(\multunit[A]\) and~\(\multunit[B]\).
  Since \(\Coinv_{\hat{A}}\otimes\Coinv_{\hat{B}}\otimes\transpose\)
  is antimultiplicative, we get
  \begin{align*}
    \multunit[B]_{2\tilde\beta}\multunit[A]_{1\tilde\alpha}
    &= (\Coinv_{\hat{A}}\otimes\Coinv_{\hat{B}}\otimes\transpose)
    (\multunit[A]_{1\alpha}\multunit[B]_{2\beta}),\\
    \bichar_{12}\multunit[A]_{1\tilde\alpha}\multunit[B]_{2\tilde\beta}
    &= (\Coinv_{\hat{A}}\otimes\Coinv_{\hat{B}}\otimes\transpose)
    (\multunit[B]_{2\beta}\multunit[A]_{1\alpha}\bichar_{12}).
  \end{align*}
  Since \(\Coinv_{\hat{A}}\otimes\Coinv_{\hat{B}}\otimes\transpose\)
  is bijective, we see that \((\alpha,\beta)\) is a Heisenberg pair if
  and only if \((\tilde\alpha,\tilde\beta)\) is an anti-Heisenberg pair.
\end{proof}

Thus Heisenberg pairs and anti-Heisenberg pairs are essentially
equivalent.

Recall that a bicharacter~\(\bichar\) yields a dual bicharacter
\(\Dubichar\in\U(\hat{B}\otimes\hat{A})\) and a right quantum group
homomorphism \(\Delta_R\colon A\to A\otimes\hat{B}\) by
Theorem~\ref{the:equivalent_notion_of_homomorphisms}.  Similarly,
\(\Dubichar\) yields a right quantum group homomorphism
\(\hat{\Delta}_R\colon B\to B\otimes\hat{A}\).  We reformulate the
condition of being a Heisenberg pair in terms of \(\Dubichar\),
\(\Delta_R\) and~\(\hat{\Delta}_R\), respectively:

\begin{lemma}
  \label{lemm:equiv_cond_V_Heisenberg_pair}
  Let \(\alpha\) and~\(\beta\) be representations of \(A\) and \(B\)
  on a Hilbert space~\(\Hils\).  Then the following are equivalent:
  \begin{enumerate}
  \item \((\alpha,\beta)\) is a Heisenberg pair;
  \item \((\beta,\alpha)\) is a \(\Dubichar\)\nb-Heisenberg pair;
  \item \((\alpha\otimes\Id_{\hat{B}})\Delta_R(a)
    =(\Dumultunit[B]_{\beta 2})(\alpha(a)\otimes 1_{\hat{B}})
    (\Dumultunit[B])^*_{\beta 2}\) for all \(a\in A\);
  \item \((\beta\otimes\Id_{\hat{A}})\hat{\Delta}_R(b) =
    (\Dumultunit[A]_{\alpha 2}) (\beta(b)\otimes 1_{\hat{A}})
    (\Dumultunit[A])^*_{\alpha 2}\) for all \(b\in B\).
  \end{enumerate}
\end{lemma}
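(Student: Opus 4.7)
The strategy is to establish (1) $\Leftrightarrow$ (2) and (1) $\Leftrightarrow$ (3); the equivalence (1) $\Leftrightarrow$ (4) then follows by applying (1) $\Leftrightarrow$ (3) after swapping the roles of $(\G,\alpha,\bichar)$ and $(\G[H],\beta,\Dubichar)$ and combining with (1) $\Leftrightarrow$ (2). The equivalence (1) $\Leftrightarrow$ (2) is a flip symmetry: applying $\flip_{12}$ to~\eqref{eq:V-Heisenberg_pair} and using $\Dubichar = \flip(\bichar^*)$, hence $\flip(\bichar) = \Dubichar^*$, one rearranges the resulting identity to obtain precisely the $\Dubichar$\nb-Heisenberg condition for $(\beta,\alpha)$.

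The core step is (1) $\Leftrightarrow$ (3). I would start from the defining relation $(\Id_{\hat{A}}\otimes\Delta_R)(\multunit[A]) = \multunit[A]_{12}\bichar_{13}$ of $\Delta_R$ from~\eqref{eq:def_V_via_right_homomorphism} and apply $\Id_{\hat{A}}\otimes\alpha\otimes\Id_{\hat{B}}$ to get $(\Id_{\hat{A}}\otimes(\alpha\otimes\Id_{\hat{B}})\Delta_R)(\multunit[A]) = \multunit[A]_{1\alpha}\bichar_{13}$ in $\U(\hat A\otimes\Comp(\Hils)\otimes\hat B)$. Since first-leg slices of $\multunit[A]$ span a dense subset of $A$ by~\eqref{eq:first_leg_slice}, condition (3) is equivalent to
\[
\Dumultunit[B]_{\beta 3}\,\multunit[A]_{1\alpha}\,(\Dumultunit[B]_{\beta 3})^* = \multunit[A]_{1\alpha}\,\bichar_{13}
\quad\text{in } \U(\hat{A}\otimes\Comp(\Hils)\otimes\hat{B}),
\]
where the $\alpha$- and $\beta$-slots both sit in position~2 and $\hat B$ in position~3. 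Now apply $\flip_{23}$: using $\flip(\Dumultunit[B]) = \multunit[B]^*$ (immediate from $\Dumultunit[B] = \flip(\multunit[B]^*)$), one finds $\flip_{23}(\Dumultunit[B]_{\beta 3}) = \multunit[B]^*_{2\beta}$, so the identity transforms into
\[
\multunit[B]^*_{2\beta}\,\multunit[A]_{1\alpha}\,\multunit[B]_{2\beta} = \multunit[A]_{1\alpha}\,\bichar_{12}
\quad\text{in } \U(\hat{A}\otimes\hat{B}\otimes\Comp(\Hils));
\]
multiplying on the left by $\multunit[B]_{2\beta}$ produces precisely~\eqref{eq:V-Heisenberg_pair}, and the steps can be reversed.

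The main difficulty is purely one of bookkeeping: keeping track of which tensor leg is which through the two flips and through the insertion of $\alpha$ on the second leg of $\multunit[A]$. Once the positions of $\hat A$, $\hat B$, and $\Comp(\Hils)$ are fixed, every manipulation is a direct consequence of the definitions of $\multunit[A]$, $\Dumultunit[B]$, $\Delta_R$, and of the flip relation between $\bichar$ and $\Dubichar$.
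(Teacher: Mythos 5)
Your proposal is correct and follows essentially the same route as the paper: (1)$\iff$(2) by applying $\flip_{12}$ and using $\Dubichar=\flip(\bichar^*)$, and (1)$\iff$(3) by applying $\Id_{\hat A}\otimes\alpha\otimes\Id_{\hat B}$ to~\eqref{eq:def_V_via_right_homomorphism}, using density of first-leg slices of $\multunit[A]$, and converting $\Dumultunit[B]_{\beta 3}$ into $(\multunit[B]_{2\beta})^*$ via $\flip_{23}$. Your derivation of (4) by swapping the roles of the two quantum groups and invoking (1)$\iff$(2) is exactly what the paper means by ``to prove (2)$\iff$(4) argue as in the proof that (1)$\iff$(3).''
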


\begin{proof}
  (1)\(\iff\)(2): (1) is equivalent to
  \[
  \multunit[A]_{1\alpha}\multunit[B]_{2\beta}\bichar^*_{12}
  =\multunit[B]_{2\beta}\multunit[A]_{1\alpha}
  \qquad \text{in }\U(\hat{A}\otimes\hat{B}\otimes\Comp(\Hils))
  \]
  by~\eqref{eq:V-Heisenberg_pair}.  Applying~\(\flip_{12}\) gives
  \begin{align}
    \label{eq:hat-V-Heisenberg_pair}
    \multunit[A]_{2\alpha}\multunit[B]_{1\beta}\Dubichar_{12}
    &= \multunit[B]_{1\beta}\multunit[A]_{2\alpha}
    \qquad \text{in }\U(\hat{B}\otimes\hat{A}\otimes\Comp(\Hils)),
  \end{align}
  which is equivalent to \((\beta,\alpha)\) being a
  \(\Dubichar\)\nb-Heisenberg pair.  Thus (1)\(\iff\)(2).

  (1)\(\iff\)(3): Let~\((\alpha,\beta)\) be a Heisenberg
  pair.  The following computation takes place in
  \(\U(\hat{A}\otimes\Comp(\Hils)\otimes\hat{B})\):
  \begin{align*}
    (\Id_{\hat{A}}\otimes\alpha\otimes\Id_{\hat{B}})
    (\Id_{\hat{A}}\otimes\Delta_R)\multunit[A]
    &= \multunit[A]_{1\alpha}\bichar_{13}
    = \flip_{23}(\multunit[A]_{1\alpha}\bichar_{12})\\
    &= \flip_{23}((\multunit[B]_{2\beta})^*\multunit[A]_{1\alpha}
    \multunit[B]_{2\beta})
    = (\Dumultunit[B]_{\beta 3})\multunit[A]_{1\alpha}(\Dumultunit[B]_{\beta 3})^*;
  \end{align*}
  the first equality uses~\eqref{eq:def_V_via_right_homomorphism}; the
  second equality is obvious; the third equality
  uses~\eqref{eq:V-Heisenberg_pair}; and the last equality uses
  \(\Dumultunit[B]=\flip((\multunit[B])^*)\).  Since
  \(\{(\omega\otimes\Id_A)\multunit[A]:\omega\in\hat{A}'\}\) is
  linearly dense in~\(A\), slicing the first leg of the first and the
  last expression in the above equation shows that
  (1)\(\Longrightarrow\)(3).

  Conversely, applying
  \(\Id_{\hat{A}}\otimes\alpha\otimes\Id_{\hat{B}}\) on both sides
  of~\eqref{eq:def_V_via_right_homomorphism} and using~(3), we get
  \[
  \multunit[A]_{1\alpha}\bichar_{13}
  = (\Id_{\hat{A}}\otimes(\alpha\otimes\Id_{\hat{B}})\Delta_R)\multunit[A]
  = (\Dumultunit[B]_{\beta 3})\multunit[A]_{1\alpha}(\Dumultunit[B]_{\beta 3})^*
  \quad \text{in }\U(\hat{A}\otimes\Comp(\Hils)\otimes\hat{B});
  \]
  applying~\(\flip_{23}\) to this gives~\eqref{eq:V-Heisenberg_pair}.
  Thus (3)\(\Longrightarrow\)(1).

  To prove (2)\(\iff\)(4) argue as in the proof that (1)\(\iff\)(3).
\end{proof}

\begin{lemma}
  \label{lem:existence_of_canonical_Heisenberg_pair}
  Let \(\HeisPair{\pi}\) and~\(\HeisPair{\eta}\) be \(\G\)\nb- and
  \(\G[H]\)\nb-Heisenberg pairs on Hilbert spaces \(\Hils_\pi\)
  and~\(\Hils_\eta\), respectively.  Then the pair of representations
  \((\alpha,\beta)\) of \(A\) and~\(B\) on
  \(\Hils_\pi\otimes\Hils_\eta\) defined by \(\alpha(a) \defeq
  (\pi\otimes\hat{\eta})\Delta_R(a)\) and \(\beta(b) \defeq
  1_{\Hils_\pi}\otimes\eta(b)\) is a \(\bichar\)\nb-Heisenberg pair.
\end{lemma}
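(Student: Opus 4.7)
My plan is to verify condition~(3) of Lemma~\ref{lemm:equiv_cond_V_Heisenberg_pair}, which characterises $\bichar$-Heisenberg pairs: it suffices to show
\[
(\alpha\otimes\Id_{\hat B})\Delta_R(a) = \Dumultunit[B]_{\beta 2}\bigl(\alpha(a)\otimes 1_{\hat B}\bigr)\Dumultunit[B]_{\beta 2}^*
\qquad\text{for all }a\in A.
\]
Working at the level of $\Delta_R$ rather than directly with the operator identity~(\ref{eq:V-Heisenberg_pair}) is convenient because the definition $\alpha = (\pi\otimes\hat\eta)\Delta_R$ is already phrased in terms of $\Delta_R$, so the left-hand side can be rewritten by a single coherence step followed by one application of the Heisenberg hypothesis on $(\eta,\hat\eta)$.

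First I would unfold $\alpha$ and invoke the second commuting square in~(\ref{eq:right_homomorphism})---namely $(\Delta_R\otimes\Id_{\hat B})\Delta_R = (\Id_A\otimes\DuComult[B])\Delta_R$---to obtain
\[
(\alpha\otimes\Id_{\hat B})\Delta_R(a) = \bigl(\pi\otimes(\hat\eta\otimes\Id_{\hat B})\DuComult[B]\bigr)\Delta_R(a).
\]
Next I would specialise the fourth condition of Lemma~\ref{lemm:equiv_cond_V_Heisenberg_pair} to the $\G[H]$-Heisenberg pair $(\eta,\hat\eta)$, for which the bicharacter is $\multunit[B]\in\U(\hat B\otimes B)$. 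The associated dual right quantum morphism is $\DuComult[B]$ by the uniqueness in~(\ref{eq:Comult_W}) applied to $\DuG[H]$, and the ambient unitary becomes $\Dumultunit[B]$, so that condition reads
\[
(\hat\eta\otimes\Id_{\hat B})\DuComult[B](\hat b) = \Dumultunit[B]_{\eta 2}\bigl(\hat\eta(\hat b)\otimes 1_{\hat B}\bigr)\Dumultunit[B]_{\eta 2}^*
\qquad\text{in }\U(\Bound(\Hils_\eta)\otimes\hat B).
\]
Substituting and pulling the conjugation by $1_{\Hils_\pi}\otimes\Dumultunit[B]_{\eta 2}$ outside---which is allowed because this conjugator acts on legs disjoint from those carrying terms of the form $\pi(a_i)\otimes 1\otimes 1$---one reassembles $(\pi\otimes\hat\eta)\Delta_R(a) = \alpha(a)$ inside. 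Recognising $\Dumultunit[B]_{\beta 2} = 1_{\Hils_\pi}\otimes\Dumultunit[B]_{\eta 2}$, which holds because $\beta = 1_{\Hils_\pi}\otimes\eta$, then produces the right-hand side of the goal.

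The main technical care required is the correct translation between the general Heisenberg setup of Lemma~\ref{lemm:equiv_cond_V_Heisenberg_pair} and its specialisation to the reduced bicharacter of~$\G[H]$: one has to verify that under the substitution $\hat A\mapsto\hat B$, $\hat B\mapsto B$, $\bichar\mapsto\multunit[B]$, the lemma's $\hat\Delta_R$ becomes $\DuComult[B]$ and its $\Dumultunit[A]$ becomes $\Dumultunit[B]$. Apart from this bookkeeping the argument is straightforward, and it is worth noting that the $\G$-Heisenberg hypothesis on $(\pi,\hat\pi)$ is never actually invoked---$\hat\pi$ does not appear in $\alpha$ or $\beta$, and any representation of $A$ on $\Hils_\pi$ would suffice.
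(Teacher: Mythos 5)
Your proof is correct, but it reaches the conclusion by a somewhat different route than the paper. The paper verifies the defining relation~\eqref{eq:V-Heisenberg_pair} directly: it first establishes the auxiliary commutation relation \(\bichar_{1\hat\eta}\multunit[B]_{2\eta} = \multunit[B]_{2\eta}\bichar_{1\hat\eta}\bichar_{12}\) --- using Lemma~\ref{lemm:equiv_cond_V_Heisenberg_pair}(4) for \((\eta,\hat\eta)\) together with the bicharacter property~\eqref{eq:bichar_char_in_second_leg} of~\(\bichar\) --- and then computes \(\multunit[A]_{1\alpha}\multunit[B]_{2\beta}\) in four steps, using~\eqref{eq:def_V_via_right_homomorphism} and the fact that \(\multunit[A]_{1\pi}\) and \(\multunit[B]_{2\eta}\) act on disjoint legs. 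You instead verify the equivalent condition~(3) of Lemma~\ref{lemm:equiv_cond_V_Heisenberg_pair}, replacing the bicharacter identity for~\(\bichar\) by the second commuting square of~\eqref{eq:right_homomorphism} for~\(\Delta_R\). The key ingredient is shared: both arguments rest on the specialisation of Lemma~\ref{lemm:equiv_cond_V_Heisenberg_pair}(4) to the \(\G[H]\)\nb-Heisenberg pair \((\eta,\hat\eta)\), with the identification of the associated coaction \(\hat{B}\to\hat{B}\otimes\hat{B}\) as \(\DuComult[B]\) (a point the paper also makes explicitly). What your version buys is that the whole computation stays at the level of~\(\Delta_R\), so the leg bookkeeping with \(\bichar_{1\hat\eta}\) and the commutation of \(\multunit[A]_{1\pi}\) with \(\multunit[B]_{2\eta}\) are not needed; the price is the (correctly handled) step of pulling the conjugation by \(1_{\Hils_\pi}\otimes\Dumultunit[B]_{\eta 2}\) through \(\pi\otimes{}\) on multipliers. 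Your closing observation that the Heisenberg property of \((\pi,\hat\pi)\) --- indeed \(\hat\pi\) itself --- is never invoked is accurate and equally true of the paper's own proof, where only the fact that \(\pi\) is a representation of~\(A\) enters.
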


\begin{proof}
  First we check the following equation:
  \begin{equation}
    \label{eq:bichar_char_in_second_leg_in_Heisenberg_pair}
    \bichar_{1\hat\eta}\multunit[B]_{2\eta}
    = \multunit[B]_{2\eta}\bichar_{1\hat\eta}\bichar_{12}\qquad
    \text{in }\U(\hat{A}\otimes\hat{B}\otimes\Comp(\Hils_\eta)).
  \end{equation}
  The coaction \(\hat{B}\to\hat{B}\otimes\hat{B}\) associated to the
  reduced bicharacter~\(\multunit[B]\) is the usual
  comultiplication~\(\DuComult[B]\).  Hence
  \[
  (\Dumultunit[B]_{\eta 3}) \bichar_{1\hat\eta}
  (\Dumultunit[B]_{\eta 3})^*
  = (\Id_{\hat{A}}\otimes\hat\eta\otimes\Id_{\hat{B}}) (\Id\otimes\DuComult[B]) \bichar
  = (\Id_{\hat{A}}\otimes\hat\eta\otimes\Id_{\hat{B}}) (\bichar_{12}\bichar_{13})
  = \bichar_{1\hat\eta}\bichar_{13}
  \]
  in \(\U(\hat{A}\otimes\Comp(\Hils_\eta)\otimes\hat{B})\)
  because of Lemma~\ref{lemm:equiv_cond_V_Heisenberg_pair}(4) and the
  bicharacter property~\eqref{eq:bichar_char_in_second_leg}
  of~\(\bichar\).  When we flip the last two legs, we turn
  \(\Dumultunit[B]_{\eta 3}\) into~\((\multunit[B]_{2\eta})^*\).
  Rearranging then
  gives~\eqref{eq:bichar_char_in_second_leg_in_Heisenberg_pair}.

  Now we can check that \((\alpha,\beta)\) is a Heisenberg pair.  The
  following computation takes place in \(\U(\hat{A}\otimes
  \hat{B}\otimes \Comp(\Hils_\pi)\otimes \Comp(\Hils_\eta))\):
  \[
  \multunit[A]_{1\alpha}\multunit[B]_{2\beta}
  = \multunit[A]_{1\pi}\bichar_{1\hat{\eta}}\multunit[B]_{2\eta}
  = \multunit[A]_{1\pi}\multunit[B]_{2\eta}\bichar_{1\hat{\eta}}\bichar_{12}
  = \multunit[B]_{2\eta}\multunit[A]_{1\pi}\bichar_{1\hat{\eta}}\bichar_{12}
  = \multunit[B]_{2\beta}\multunit[A]_{1\alpha}\bichar_{12}
  \]
  the first equality uses the definitions of \(\alpha\) and~\(\beta\)
  and~\eqref{eq:def_V_via_right_homomorphism}; the second equality
  uses~\eqref{eq:bichar_char_in_second_leg_in_Heisenberg_pair}; the
  third equality uses that \(\multunit[A]_{1\pi}\)
  and~\(\multunit[B]_{2\eta}\) commute; and the fourth equality uses
  the definitions of \(\alpha\) and~\(\beta\) again.
\end{proof}

\subsection{Commutativity and Heisenberg pairs}
\label{sec:commutativity}

Locality principles in quantum field theory always require commutation
relations of the simplest possible form \(xy=yx\).  Our noncommutative
tensor products are based on more complicated commutation relations.
We get ordinary commutativity, however, if we put a Heisenberg and an
anti-Heisenberg pair together.  This is crucial for our noncommutative
tensor product to exist.

\begin{proposition}
  \label{prop:Commutation_V-Heisen_V-anti_Heisen}
  Let \(\Hils\) and~\(\Hils[K]\) be Hilbert spaces; let \(\alpha\)
  and~\(\beta\) be representations of \(A\) and~\(B\) on~\(\Hils\),
  respectively; and let \(\bar\alpha\) and~\(\bar\beta\) be representations
  of \(A\) and~\(B\) on~\(\Hils[K]\), respectively.  Then the
  following are equivalent:
  \begin{enumerate}
  \item the representations \((\alpha\otimes\bar\alpha)\Comult[A]\) and
    \((\beta\otimes\bar\beta)\Comult[B]\) of \(A\) and~\(B\) on
    \(\Hils\otimes\Hils[K]\) commute, that is, for any \(a\in A\) and
    \(b\in B\), we have
    \begin{equation}
      \label{eq:commutator_of_V-Heisen_and_V-anti_Heisen}
      [(\alpha\otimes\bar\alpha)\Comult[A](a),
      (\beta\otimes\bar\beta)\Comult[B](b)]
      =0;
    \end{equation}
  \item there is a bicharacter \(\bichar\in\U(\hat{A}\otimes\hat{B})\)
    such that \((\alpha,\beta)\) is a \(\bichar\)\nb-Heisenberg pair
    and \((\bar\alpha,\bar\beta)\) is a \(\bichar\)\nb-anti-Heisenberg
    pair.
  \end{enumerate}
\end{proposition}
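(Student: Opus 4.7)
The plan is to recast both conditions in terms of the reduced bicharacters.  Using \((\Id_{\hat A}\otimes\Comult[A])\multunit[A]=\multunit[A]_{12}\multunit[A]_{13}\) together with the definitions of~\(\alpha\) and~\(\bar\alpha\), the unitary in \(\U(\hat A\otimes\Comp(\Hils)\otimes\Comp(\Hils[K]))\) that encodes the representation \((\alpha\otimes\bar\alpha)\Comult[A]\) of~\(A\) is \(\multunit[A]_{1\alpha}\multunit[A]_{1\bar\alpha}\), and similarly for~\(B\).  Labelling tensor leg~\(1\) as~\(\hat A\), \(2\) as~\(\hat B\), \(3\) as~\(\Comp(\Hils)\) and \(4\) as~\(\Comp(\Hils[K])\), condition~(1) is equivalent to the identity
\begin{equation*}
  \multunit[A]_{1\alpha}\multunit[A]_{1\bar\alpha}\multunit[B]_{2\beta}\multunit[B]_{2\bar\beta}
  = \multunit[B]_{2\beta}\multunit[B]_{2\bar\beta}\multunit[A]_{1\alpha}\multunit[A]_{1\bar\alpha}
\end{equation*}
in \(\U(\hat A\otimes\hat B\otimes\Comp(\Hils)\otimes\Comp(\Hils[K]))\) (after slicing in legs \(1,2\) one recovers the commutator of the actual operators).

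Next I would observe that \(\multunit[A]_{1\bar\alpha}\) (legs~\(1,4\)) commutes with \(\multunit[B]_{2\beta}\) (legs~\(2,3\)) and, symmetrically, \(\multunit[A]_{1\alpha}\) commutes with \(\multunit[B]_{2\bar\beta}\), because these pairs act on disjoint tensor legs.  Pushing these commutations through on both sides rearranges the previous identity to
\begin{equation}\label{eq:proposal-key}
  (\multunit[B]_{2\beta}\multunit[A]_{1\alpha})^{*}\multunit[A]_{1\alpha}\multunit[B]_{2\beta}
  = \multunit[B]_{2\bar\beta}\multunit[A]_{1\bar\alpha}(\multunit[A]_{1\bar\alpha}\multunit[B]_{2\bar\beta})^{*},
\end{equation}
whose left-hand side is supported only on legs~\(1,2,3\) and right-hand side only on legs~\(1,2,4\).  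This reformulation is the common ground for both implications.

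For (2)\(\Rightarrow\)(1) the Heisenberg relation for \((\alpha,\beta)\) rewrites the left-hand side of~\eqref{eq:proposal-key} as \(\bichar_{12}\), and the anti-Heisenberg relation for \((\bar\alpha,\bar\beta)\) rewrites the right-hand side as the same \(\bichar_{12}\); reversing the rearrangement above then gives~(1).  For (1)\(\Rightarrow\)(2), an element of \(\U(\hat A\otimes\hat B\otimes\Comp(\Hils))\), embedded trivially in leg~\(4\), which equals an element of \(\U(\hat A\otimes\hat B\otimes\Comp(\Hils[K]))\), embedded trivially in leg~\(3\), must already lie in \(\U(\hat A\otimes\hat B)\); calling this common value~\(\bichar\), identity~\eqref{eq:proposal-key} is precisely the Heisenberg relation for \((\alpha,\beta)\) and the anti-Heisenberg relation for \((\bar\alpha,\bar\beta)\).

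The hard part that remains is to verify that \(\bichar\) is actually a bicharacter, i.e.\ that it satisfies \((\DuComult[A]\otimes\Id_{\hat B})\bichar=\bichar_{23}\bichar_{13}\) and \((\Id_{\hat A}\otimes\DuComult[B])\bichar=\bichar_{12}\bichar_{13}\).  My plan is to apply \(\DuComult[A]\) and \(\DuComult[B]\) to the defining identity~\eqref{eq:proposal-key}, use \((\DuComult[A]\otimes\Id_A)\multunit[A]=\multunit[A]_{23}\multunit[A]_{13}\) and its analogue for~\(B\) to split each leg of \(\multunit[A]\) and \(\multunit[B]\) appearing there into two copies, and then to rearrange on the resulting auxiliary legs by the same disjoint-leg commutations together with iterated applications of the Heisenberg and anti-Heisenberg relations already established.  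I expect this computation to be the most substantial piece of work in the proof.
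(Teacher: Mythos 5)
Your argument coincides step for step with the paper's proof: the same reduction of condition~(1) to an identity between products of \(\multunit[A]_{1\alpha}\multunit[A]_{1\bar\alpha}\) and \(\multunit[B]_{2\beta}\multunit[B]_{2\bar\beta}\) (via \eqref{eq:Comult_W} and \eqref{eq:first_leg_slice}), the same disjoint-leg commutations leading to your key identity (which is exactly \eqref{eq:commutator_equiv_V-H_anti-H_1} in the paper), the same proof of (2)\(\Rightarrow\)(1), and the same support argument showing that the common value~\(\bichar\) lies in \(\U(\hat{A}\otimes\hat{B})\).

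The one piece you leave unexecuted is the bicharacter verification, and there your plan, while pointed in the right direction, is heavier than what is needed: no iterated Heisenberg or anti-Heisenberg relations enter. The trick is to use, for each of the two bicharacter identities, only the side of your key identity that is trivial in the Hilbert space leg you do not want to disturb. For \((\DuComult[A]\otimes\Id_{\hat{B}})\bichar=\bichar_{23}\bichar_{13}\), write \(\bichar = \multunit[B]_{2\bar\beta}\multunit[A]_{1\bar\alpha}(\multunit[B])^*_{2\bar\beta}(\multunit[A])^*_{1\bar\alpha}\) (the anti-Heisenberg side, trivial in \(\Comp(\Hils)\)); applying \(\DuComult[A]\) to the first leg and using \eqref{eq:dual_Comult_W} to split \(\multunit[A]_{1\bar\alpha}\) into \(\multunit[A]_{2\bar\alpha}\multunit[A]_{1\bar\alpha}\) yields
\(\multunit[B]_{3\bar\beta}\multunit[A]_{2\bar\alpha}\multunit[A]_{1\bar\alpha}
(\multunit[B])^*_{3\bar\beta}(\multunit[A])^*_{1\bar\alpha}(\multunit[A])^*_{2\bar\alpha}
= \bichar_{23}\,\multunit[A]_{2\bar\alpha}\,\bichar_{13}\,(\multunit[A])^*_{2\bar\alpha}\),
and since you have already shown that \(\bichar\) is trivial in the Hilbert space legs, \(\bichar_{13}\) commutes with \(\multunit[A]_{2\bar\alpha}\), giving \(\bichar_{23}\bichar_{13}\). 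The identity \((\Id_{\hat{A}}\otimes\DuComult[B])\bichar=\bichar_{12}\bichar_{13}\) follows symmetrically from the Heisenberg side, which is trivial in \(\Comp(\Hils[K])\). With that computation written out, your proof is the paper's proof.
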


\begin{proof}
  Equation~\eqref{eq:commutator_of_V-Heisen_and_V-anti_Heisen} is
  equivalent to
  \[
  \multunit[A]_{1\alpha}\multunit[A]_{1\bar\alpha}
  \multunit[B]_{2\beta}\multunit[B]_{2\bar\beta}
  = \multunit[B]_{2\beta}\multunit[B]_{2\bar\beta}
  \multunit[A]_{1\alpha}\multunit[A]_{1\bar\alpha}
  \qquad\text{in }\U(\hat{A}\otimes\hat{B}\otimes
  \Comp(\Hils)\otimes\Comp(\Hils[K]))
  \]
  because of \eqref{eq:Comult_W} and~\eqref{eq:first_leg_slice} for
  \(\multunit[A]\) and~\(\multunit[B]\).  Commuting
  \(\multunit[A]_{1\bar\alpha}\) with~\(\multunit[B]_{2\beta}\) and
  \(\multunit[B]_{2\bar\beta}\) with~\(\multunit[A]_{1\alpha}\),
  Equation~\eqref{eq:commutator_of_V-Heisen_and_V-anti_Heisen} becomes
  \begin{equation}
    \label{eq:commutator_equiv_V-H_anti-H_1}
    (\multunit[A])^*_{1\alpha}(\multunit[B])^*_{2\beta}
    \multunit[A]_{1\alpha}\multunit[B]_{2\beta}
    = \multunit[B]_{2\bar\beta}\multunit[A]_{1\bar\alpha}
    (\multunit[B])^*_{2\bar\beta}(\multunit[A])^*_{1\bar\alpha}.
  \end{equation}
  If \(\bichar\in\U(\hat{A}\otimes\hat{B})\) is a bicharacter,
  \((\alpha,\beta)\) a Heisenberg pair and \((\bar\alpha,\bar\beta)\)
  an anti-Heisenberg pair, then
  \eqref{eq:commutator_equiv_V-H_anti-H_1} follows, and
  hence~\eqref{eq:commutator_of_V-Heisen_and_V-anti_Heisen}.

  It remains to show~(2)
  assuming~\eqref{eq:commutator_equiv_V-H_anti-H_1}.
  Let~\(\bichar\) be the unitary
  in~\eqref{eq:commutator_equiv_V-H_anti-H_1}.  It belongs to
  \(1\otimes\hat{A}\otimes\hat{B}\otimes1\) because its first
  definition has~\(1_{\Hils[K]}\) in the fourth leg and its second
  definition has~\(1_{\Hils}\) in the third leg.

  We check that~\(\bichar\) is a bicharacter in both legs.  First we
  check~\eqref{eq:bichar_char_in_first_leg}:
  \begin{align*}
    (\DuComult[A]\otimes\Id_{\hat{B}})\bichar
    &= (\DuComult[A]\otimes\Id_{\hat{B}})
    (\multunit[B]_{2\bar\beta}\multunit[A]_{1\bar\alpha}
    (\multunit[B])^*_{2\bar\beta}(\multunit[A])^*_{1\bar\alpha})\\
    &= \multunit[B]_{3\bar\beta}\multunit[A]_{2\bar\alpha}\multunit[A]_{1\bar\alpha}
    (\multunit[B])^*_{3\bar\beta}(\multunit[A])^*_{1\bar\alpha}(\multunit[A])^*_{2\bar\alpha}\\
    &= \bichar_{23}\multunit[A]_{2\bar\alpha}\bichar_{13}(\multunit[A])^*_{2\bar\alpha}
    =  \bichar_{23}\bichar_{13};
  \end{align*}
  the first and third equality use that~\(\bichar\) is the right hand
  side of~\eqref{eq:commutator_equiv_V-H_anti-H_1}; the second
  equality uses~\eqref{eq:bichar_char_in_first_leg}
  for~\(\multunit[A]\); and the last equality uses that
  \(\multunit[A]_{2\bar\alpha}\) and~\(\bichar_{13}\) commute.  A
  similar computation using that~\(\bichar\) is the left hand side
  in~\eqref{eq:commutator_equiv_V-H_anti-H_1} yields
  \((\Id_{\hat{A}}\otimes\DuComult[B])\bichar=\bichar_{12}\bichar_{13}\);
  thus \(\bichar\in\U(\hat{A}\otimes\hat{B})\) is a bicharacter.
  Finally, \eqref{eq:commutator_equiv_V-H_anti-H_1} says
  that~\((\alpha,\beta)\) is a \(\bichar\)\nb-Heisenberg pair and
  that~\((\bar\alpha,\bar\beta)\) is a \(\bichar\)\nb-anti-Heisenberg
  pair.
\end{proof}

\subsection{Twisted tensor products via Heisenberg pairs}
\label{sec:boxtimes_via_Heisenberg}

Let \(\Qgrp{G}{A}\) and \(\Qgrp{H}{B}\) be \(\Cst\)\nb-quantum
groups, let \(\bichar\in\U(\hat{A}\otimes\hat{B})\) be a
bicharacter, let \((C,\gamma)\) be a
\(\G\)\nb-\(\Cst\)-algebra, and let \((D,\delta)\) be an
\(\G[H]\)\nb-\(\Cst\)-algebra.  Let~\((\alpha,\beta)\) be a
\(\bichar\)\nb-Heisenberg pair on some Hilbert space~\(\Hils\).

Using this data, we now construct a crossed product
\((C\boxtimes_\bichar D,\iota_C,\iota_D)\) of \(C\) and~\(D\)
in the sense of Definition~\ref{def:crossed_product}.  A
more precise notation is
\[
C \boxtimes_\bichar D = (C,\gamma)\boxtimes_\bichar (D,\delta).
\]
There is no need to mention~\((\alpha,\beta)\) in our notation
because all Heisenberg pairs give equivalent crossed products; we
will prove this in Section~\ref{sec:boxtimes_with_covariant_rep}.
Our definition is based on the morphisms
\begin{alignat*}{2}
  \iota_C&\colon C\to C\otimes D\otimes \Comp(\Hils),&\qquad
  c&\mapsto (\Id_C\otimes\alpha)\gamma(c)_{13},\\
  \iota_D&\colon D\to C\otimes D\otimes \Comp(\Hils),&\qquad
  d&\mapsto (\Id_D\otimes\beta)\delta(d)_{23}.
\end{alignat*}

\begin{lemma}
  \label{lem:boxtimes_crossed_product}
  Let \(X\subseteq C\) and \(Y\subseteq D\) be closed subspaces
  with
  \[
  \gamma(X)\cdot (1_C\otimes A) = X\otimes A
  \quad\text{and}\quad
  \delta(Y)\cdot (1_D\otimes B) = Y\otimes B.
  \]
  Then \(\iota_C(X)\cdot \iota_D(Y) = \iota_D(Y)\cdot
  \iota_C(X)\) in \(\Mult(C\otimes D\otimes\Comp(\Hils))\).
\end{lemma}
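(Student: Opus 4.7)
The plan is to show that both $\iota_C(X)\cdot\iota_D(Y)$ and $\iota_D(Y)\cdot\iota_C(X)$ coincide with the symmetric subspace $X\otimes Y\otimes\alpha(A)\beta(B)$, using the Heisenberg pair condition to control the common leg~3 and the Podle\'s conditions to handle the coactions in legs 1 and~2.

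First I would establish the commutation $\alpha(A)\cdot\beta(B) = \beta(B)\cdot\alpha(A)$ of closed subspaces of~$\Bound(\Hils)$. Slicing the defining equation $\multunit[A]_{1\alpha}\multunit[B]_{2\beta} = \multunit[B]_{2\beta}\multunit[A]_{1\alpha}\bichar_{12}$ in the first two legs by arbitrary normal functionals yields, on the left, a general product $\alpha(a)\beta(b)$ (because slices of $\multunit[A]$ and $\multunit[B]$ are dense in $A$ and~$B$ by~\eqref{eq:first_leg_slice}), and, on the right, a norm-convergent series of products $\beta(b_k)\alpha(a_k)$ once the twist by~$\bichar$ is absorbed into the slicing functionals. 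The reverse inclusion follows by analogously slicing the rearrangement $\multunit[B]_{2\beta}\multunit[A]_{1\alpha} = \multunit[A]_{1\alpha}\multunit[B]_{2\beta}\bichar^*_{12}$.

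Next, applying $\Id_C\otimes\alpha$ to $\gamma(X)\cdot(1\otimes A) = X\otimes A$ and placing the result in legs~1 and~3 gives $\iota_C(X)\cdot(1\otimes 1\otimes\alpha(A)) = X\otimes 1\otimes\alpha(A)$, and symmetrically $\iota_D(Y)\cdot(1\otimes 1\otimes\beta(B)) = 1\otimes Y\otimes\beta(B)$. Combining these with the elementary observation that subspaces supported in disjoint tensor legs commute, and using the previous paragraph to interchange $\alpha(A)$ with $\beta(B)$ inside leg~3, a short chain of rearrangements yields
\[
\iota_C(X)\iota_D(Y)\cdot(1\otimes 1\otimes\alpha(A)\beta(B)) = X\otimes Y\otimes\alpha(A)\beta(B),
\]
and the mirror computation gives the same value starting from $\iota_D(Y)\iota_C(X)$.

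It remains to cancel the right-hand factor $1\otimes 1\otimes\alpha(A)\beta(B)$. This is possible because $\alpha(A)\beta(B)$ is a nondegenerate $\Cst$\nb-subalgebra of~$\Bound(\Hils)$ (since $\alpha$ and~$\beta$ are individually nondegenerate), so its approximate units converge strictly to the identity in $\Bound(\Hils)$. The main obstacle lies precisely in this final cancellation: $\alpha(A)\beta(B)$ is generally not contained in~$\Comp(\Hils)$, so elements of $\iota_C(X)\iota_D(Y)$ and $\iota_D(Y)\iota_C(X)$ live in the multiplier algebra rather than in~$C\otimes D\otimes\Comp(\Hils)$, and upgrading strict-topology convergence to norm convergence requires invoking the Podle\'s factorisations above to position the approximating elements inside a subspace where norm-level cancellation is valid.
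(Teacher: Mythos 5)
Your first two steps are essentially sound: slicing the Heisenberg relation in the first two legs does give \(\alpha(A)\cdot\beta(B)=\beta(B)\cdot\alpha(A)\) (the twist \(\bichar_{12}\) is absorbed into a normal functional on the first two legs, which is a norm limit of sums of product functionals), and combining this with the Podle\'s hypotheses yields
\[
\iota_C(X)\cdot\iota_D(Y)\cdot N_3 = X\otimes Y\otimes N = \iota_D(Y)\cdot\iota_C(X)\cdot N_3,
\qquad N\defeq\alpha(A)\cdot\beta(B).
\]
The final cancellation of \(N_3\), however, is a genuine gap and not a technicality. Your stated plan --- that both products \emph{equal} \(X\otimes Y\otimes N\) --- is already false: for trivial coactions one has \(\iota_C(x)=x\otimes1\otimes1\) and \(\iota_D(y)=1\otimes y\otimes1\), so \(\iota_C(X)\cdot\iota_D(Y)=(X\otimes Y)\otimes\C1\), which is not contained in \(X\otimes Y\otimes N\) when \(N\) is non-unital. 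Correspondingly, for an approximate unit \((e_\lambda)\) of the nondegenerate subalgebra \(N\) the convergence \(p\,(e_\lambda)_3\to p\) holds only strictly; norm convergence would force \(p\) into the closed subspace \(X\otimes Y\otimes N\), which is exactly what fails. No Podle\'s factorisation repairs this, because \(\iota_C(x)\) and \(\iota_D(y)\) are genuine multipliers whose leg-\(3\) components need not lie in \(N\) (just as \(\delta(y)\in\Mult(D\otimes B)\) need not lie in \(D\otimes B\)). More fundamentally, \(P\cdot N_3=Q\cdot N_3\) for nondegenerate \(N\) does not imply \(P=Q\) for norm-closed subspaces of a multiplier algebra: take \(P=\C1\) and \(Q=N\).

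The paper escapes exactly this trap by not working inside leg \(3\) at all. It introduces a \(\bichar\)-anti-Heisenberg pair \((\bar\alpha,\bar\beta)\) on an auxiliary Hilbert space \(\Hils[K]\), uses Proposition~\ref{prop:Commutation_V-Heisen_V-anti_Heisen} to obtain honest commutation of \((\iota_C\otimes\bar\alpha)\gamma(c)\) with \((\iota_D\otimes\bar\beta)\delta(d)\), and then uses the Podle\'s conditions to make the auxiliary algebra appear as a full tensor factor \(\Comp(\Hils[K])\) in a \emph{new, fourth} leg. The resulting identity \((\iota_C(X)\cdot\iota_D(Y))\otimes\Comp(\Hils[K])=(\iota_D(Y)\cdot\iota_C(X))\otimes\Comp(\Hils[K])\) can be collapsed by the norm-continuous slice map \(\Id\otimes\omega\) for a state \(\omega\) on \(\Comp(\Hils[K])\) --- an operation that has no analogue for a nondegenerate subalgebra sitting inside an already occupied leg. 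To complete your argument you would need this (or an equivalent) mechanism; the commutativity of a Heisenberg pair against an anti-Heisenberg pair is the essential input your proposal omits.
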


\begin{proof}
  Let \((\bar\alpha,\bar\beta)\) be a \(\bichar\)\nb-anti-Heisenberg
  pair on a Hilbert space~\(\Hils[K]\).  The definition of~\(\iota_C\)
  and the comodule property~\eqref{eq:right_coaction} for~\(\gamma\)
  yield
  \[
  (\iota_C\otimes\bar\alpha)\gamma
  = ((\Id_C\otimes\alpha\otimes\bar\alpha)
  (\gamma\otimes\Id_A)\gamma)_{134}
  = ((\Id_C\otimes(\alpha\otimes\bar\alpha)
  \Comult[A])\gamma)_{134};
  \]
  Similarly,
  \[
  (\iota_D\otimes\bar\beta)\delta
  = ((\Id_D\otimes(\beta\otimes\bar\beta)\Comult[B])\delta)_{234}.
  \]
  Now Proposition~\ref{prop:Commutation_V-Heisen_V-anti_Heisen}
  yields
  \begin{equation}
    \label{eq:commutation_of_C_and_D_morphism}
    (\iota_C\otimes\bar\alpha)\gamma(c)\cdot
    (\iota_D\otimes\bar\beta)\delta(d)
    = (\iota_D\otimes\bar\beta)\delta(d) \cdot
    (\iota_C\otimes\bar\alpha)\gamma(c)
  \end{equation}
  for all \(c\in C\), \(d\in D\).

  Since \(\bar\alpha(A)\cdot\Comp(\Hils[K]) =
  \Comp(\Hils[K])\), our assumption \(\gamma(X)\cdot
  (1_C\otimes A) = X\otimes A\) gives
  \begin{multline*}
    ((\iota_C\otimes\bar\alpha)\gamma(X))\cdot
    \Comp(\Hils[K])_4
    = (\iota_C\otimes\bar\alpha)
    (\gamma(X)\cdot(1_C\otimes A)) \cdot
    \Comp(\Hils[K])_4
    \\ = (\iota_C(X)\otimes\bar\alpha(A)) \cdot
    \Comp(\Hils[K])_4
    = \iota_C(X)\otimes\Comp(\Hils[K]).
  \end{multline*}
  Similarly, \(\bar\beta(B)\cdot\Comp(\Hils[K])=\Comp(\Hils[K])\) and
  \(\delta(Y)\cdot (1_D\otimes B) = Y\otimes B\) give
  \[
  ((\iota_D\otimes\bar\beta)\delta(Y))\cdot \Comp(\Hils[K])_4
  = \iota_D(Y)\otimes\Comp(\Hils[K]).
  \]
  Equation~\eqref{eq:commutation_of_C_and_D_morphism} gives
  \[
  (\iota_C\otimes\bar\alpha)\gamma(X)\cdot
  (\iota_D\otimes\bar\beta)\delta(Y) =
  (\iota_D\otimes\bar\beta)\delta(Y) \cdot
  (\iota_C\otimes\bar\alpha)\gamma(X).
  \]
  Multiplying this equation on the right with \(1_{C\otimes
    D\otimes\Hils}\otimes\Comp(\Hils[K])\) and using the
  computations above to simplify, we get
  \[
  (\iota_C(X)\cdot\iota_D(Y))\otimes\Comp(\Hils[K])
  = (\iota_D(X)\cdot\iota_C(Y))\otimes\Comp(\Hils[K]).
  \]
  Applying a state~\(\omega\) on~\(\Comp(\Hils[K])\) to this
  equation gives \(\iota_C(X)\cdot\iota_D(Y) =
  \iota_D(X)\cdot\iota_C(Y)\) as desired.
\end{proof}

\begin{lemma}
  \label{lem:boxtimes_Cstar}
  \(\iota_C(C)\cdot \iota_D(D) = \iota_D(D)\cdot \iota_C(C)\) in
  \(\Mult(C\otimes D\otimes\Comp(\Hils))\).
\end{lemma}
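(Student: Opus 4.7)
The plan is to deduce this immediately from Lemma~\ref{lem:boxtimes_crossed_product} by taking \(X \defeq C\) and \(Y \defeq D\). The two hypotheses needed are
\[
\gamma(C)\cdot (1_C\otimes A) = C\otimes A
\quad\text{and}\quad
\delta(D)\cdot (1_D\otimes B) = D\otimes B,
\]
but these are exactly the Podleś conditions built into Definition~\ref{def:cont_coaction}(3) of a continuous coaction. So there is nothing further to verify: the present lemma is simply the instance \((X,Y)=(C,D)\) of Lemma~\ref{lem:boxtimes_crossed_product}.

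There is no real obstacle here; the work has already been carried out in Lemma~\ref{lem:boxtimes_crossed_product}, where the delicate Heisenberg/anti-Heisenberg commutation argument via Proposition~\ref{prop:Commutation_V-Heisen_V-anti_Heisen} was used to move \(\iota_C(X)\) past \(\iota_D(Y)\). The point of stating the preceding lemma for general \(X\) and \(Y\) with Podleś-type density conditions, rather than only for \(X=C\), \(Y=D\), is that it will presumably be reapplied later to smaller subspaces (such as \(\gamma\)-invariant or spectral subspaces) that still satisfy the Podleś condition; for the present statement the full case is all that is required.
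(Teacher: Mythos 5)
Your proof is correct and coincides with the paper's: both obtain the statement as the special case \(X=C\), \(Y=D\) of Lemma~\ref{lem:boxtimes_crossed_product}, noting that the required density hypotheses are precisely the Podle\'s conditions for the continuous coactions \(\gamma\) and \(\delta\).
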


\begin{proof}
  Since our coactions satisfy the Podle\'s conditions, this is the
  special case \(X=C\) and \(Y=D\) of
  Lemma~\ref{lem:boxtimes_crossed_product}.
\end{proof}

Lemma~\ref{lem:boxtimes_Cstar} and the discussion after
Definition~\ref{def:crossed_product} imply that
\[
C\boxtimes_\bichar D \defeq \iota_C(C)\cdot\iota_D(D)
\]
is a \(\Cst\)\nb-algebra, that \(\iota_C\) and~\(\iota_D\) are
morphisms from \(C\) and~\(D\) to \(C\boxtimes_\bichar D\),
respectively, and that \((C\boxtimes_\bichar D,\iota_C,\iota_D)\) is
a crossed product of \(C\) and~\(D\).

The following observation is useful to study slice maps on
\(C\boxtimes_\bichar D\).

\begin{lemma}
  \label{lem:boxtimes_Podles}
  In the situation of
  Lemma~\textup{\ref{lem:boxtimes_crossed_product}},
  \begin{equation}
    \label{eq:like_Podles_for_boxtimes}
    \iota_C(X)\cdot \iota_D(Y)\cdot
    \Comp(\Hils)_3
    = X \otimes Y \otimes \Comp(\Hils),
  \end{equation}
  where the right hand side means the closed linear span of
  \(x\otimes y\otimes z\) with \(x\in X\), \(y\in Y\),
  \(z\in\Comp(\Hils)\).  In particular, \((C\boxtimes_\bichar
  D) \cdot \Comp(\Hils)_3 = C\otimes D\otimes \Comp(\Hils)\).
\end{lemma}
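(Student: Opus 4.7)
The plan is to absorb $\Comp(\Hils)_3$ into one factor at a time, using the Podle\'s-type hypotheses $\gamma(X)\cdot(1_C\otimes A)=X\otimes A$ and $\delta(Y)\cdot(1_D\otimes B)=Y\otimes B$ together with the nondegeneracy $\alpha(A)\cdot\Comp(\Hils)=\Comp(\Hils)$ and $\beta(B)\cdot\Comp(\Hils)=\Comp(\Hils)$ of the representations. The commutation Lemma~\ref{lem:boxtimes_crossed_product} is invoked once to rearrange the order of $\iota_C(X)$ and $\iota_D(Y)$.

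As a preliminary step I compute $\iota_C(X)\cdot\Comp(\Hils)_3$. Since $\iota_C(X)=((\Id_C\otimes\alpha)\gamma(X))_{13}$ is concentrated in legs~$1$ and~$3$ with $1_D$ in leg~$2$, in legs~$1,3$ this product equals $(\Id_C\otimes\alpha)\gamma(X)\cdot(1_C\otimes\Comp(\Hils))$. Inserting $\alpha(A)$ by nondegeneracy of~$\alpha$ and applying the hypothesis on~$X$,
\[
(\Id_C\otimes\alpha)\gamma(X)\cdot(1_C\otimes\Comp(\Hils))
=(\Id_C\otimes\alpha)(\gamma(X)\cdot(1_C\otimes A))\cdot(1_C\otimes\Comp(\Hils))
=X\otimes\Comp(\Hils),
\]
so $\iota_C(X)\cdot\Comp(\Hils)_3=X\otimes 1_D\otimes\Comp(\Hils)$. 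Now I rearrange using Lemma~\ref{lem:boxtimes_crossed_product} and feed in this identity:
\[
\iota_C(X)\cdot\iota_D(Y)\cdot\Comp(\Hils)_3
=\iota_D(Y)\cdot\iota_C(X)\cdot\Comp(\Hils)_3
=\iota_D(Y)\cdot(X\otimes 1_D\otimes\Comp(\Hils)).
\]
Since $\iota_D(Y)$ has $1_C$ in leg~$1$, the first leg of the result is just $X$, and legs~$2,3$ carry $\iota_D(Y)\cdot(1_D\otimes\Comp(\Hils))$. The same computation as before, with $\beta$ in place of $\alpha$ and the hypothesis on~$Y$ in place of that on~$X$, gives $Y\otimes\Comp(\Hils)$ in legs~$2,3$. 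Combining yields $X\otimes Y\otimes\Comp(\Hils)$, proving~\eqref{eq:like_Podles_for_boxtimes}. The ``in particular'' statement is the specialisation $X=C$, $Y=D$: the Podle\'s conditions for the continuous coactions~$\gamma$ and~$\delta$ are precisely the hypotheses of Lemma~\ref{lem:boxtimes_crossed_product} in that case, and $C\boxtimes_\bichar D=\iota_C(C)\cdot\iota_D(D)$ by definition.

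The argument is mostly leg bookkeeping, and no genuine obstacle appears once the commutation Lemma~\ref{lem:boxtimes_crossed_product} is available. The only point requiring care is to interpret $\iota_C(X)$ and $\iota_D(Y)$ consistently as multipliers of $C\otimes D\otimes\Comp(\Hils)$ concentrated in disjoint pairs of legs, so that absorbing $\Comp(\Hils)_3$ into one of them leaves the other free to act independently on the remaining leg.
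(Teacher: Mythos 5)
Your proof is correct and uses the same mechanism as the paper's: the nondegeneracy of \(\alpha\) and~\(\beta\) together with the Podle\'s-type hypotheses on \(X\) and~\(Y\) to absorb \(\Comp(\Hils)_3\) into each factor in turn. The only (cosmetic) difference is that the paper absorbs \(\Comp(\Hils)_3\) into the adjacent factor \(\iota_D(Y)\) first, turning it into \(\Comp(\Hils)_3\cdot Y_2\), which makes your extra appeal to Lemma~\ref{lem:boxtimes_crossed_product} unnecessary.
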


\begin{proof}
  Since \(\Comp(\Hils)= \beta(B)\cdot\Comp(\Hils)\), we may
  compute
  \begin{multline*}
    \iota_D(Y)\cdot \Comp(\Hils)_3
    = ((\Id_D\otimes\beta)(\delta(Y)\cdot (1_D\otimes B)))_{23}
    \Comp(\Hils)_3
    \\ = (Y \otimes \beta(B)\cdot \Comp(\Hils))_{23}
    = \Comp(\Hils)_3 \cdot Y_2.
  \end{multline*}
  Here \(Y_2\) and \(\Comp(\Hils)_3\) mean \(Y\) and
  \(\Comp(\Hils)\) in the second and third leg, respectively.
  A similar computation for \(\iota_C(X)\) using
  \(\Comp(\Hils)= \alpha(A)\cdot\Comp(\Hils)\) now
  gives~\eqref{eq:like_Podles_for_boxtimes}.
\end{proof}

\begin{example}
  \label{exa:boxtimes_trivial_coaction}
  Assume that the coaction~\(\gamma\) is trivial.  Then
  \(\gamma(c)_{13}=c\otimes 1\otimes 1\), so that \(C\boxtimes_\bichar
  D\cong C\otimes D\), embedded into \(\Mult(C\otimes
  D\otimes\Comp(\Hils))\) via \(\Id_C\otimes
  (\Id_D\otimes\beta)\delta\).  We also get \(C\boxtimes_\bichar
  D\cong C\otimes D\) if~\(\delta\) is trivial.
\end{example}

\begin{lemma}
  \label{lemm:crossed_associativity}
  Let \(C_0\) and~\(D_0\) be \(\Cst\)\nb-algebras and equip
  \(C_0\otimes C\) and \(D_0\otimes D\) with the coactions
  \(\Id_{C_0}\otimes\gamma\) and \(\Id_{D_0}\otimes\delta\),
  respectively.  Then
  \begin{equation}
    \label{eq:left_asso}
    (C_0\otimes C)\boxtimes_\bichar (D_0 \otimes D)
    = C_0\otimes D_0 \otimes (C\boxtimes_\bichar D).
  \end{equation}
\end{lemma}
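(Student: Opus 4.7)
The plan is to realise both sides of the claimed equation as $\Cst$-subalgebras of a common multiplier algebra, using a single $\bichar$-Heisenberg pair, and to check that the defining embeddings agree leg by leg. There is no deep content beyond the naturality of the Heisenberg-pair construction with respect to tensoring in trivially acted-on factors.

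First I would fix a $\bichar$-Heisenberg pair $(\alpha,\beta)$ on a Hilbert space $\Hils$, which exists by Lemma~\ref{lem:existence_of_canonical_Heisenberg_pair}, and use it to compute both sides of~\eqref{eq:left_asso}. Set $\tilde\gamma\defeq\Id_{C_0}\otimes\gamma$ and $\tilde\delta\defeq\Id_{D_0}\otimes\delta$. Under the canonical reshuffling of tensor factors, the ambient multiplier algebra $(C_0\otimes C)\otimes(D_0\otimes D)\otimes\Comp(\Hils)$ is identified with $C_0\otimes D_0\otimes C\otimes D\otimes\Comp(\Hils)$, with the $\Comp(\Hils)$ leg in last position.

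Next I would unwind the embeddings introduced in Section~\ref{sec:boxtimes_via_Heisenberg}. Since $\tilde\gamma(c_0\otimes c)=c_0\otimes\gamma(c)$, applying $\Id\otimes\alpha$ and inserting identities in the $D_0\otimes D$ slot yields, after the reshuffling above,
\[
\iota_{C_0\otimes C}(c_0\otimes c)=c_0\otimes 1_{D_0}\otimes\iota_C(c),
\]
where $\iota_C$ is the embedding used to form $C\boxtimes_\bichar D$ with the same pair $(\alpha,\beta)$. A symmetric computation gives
\[
\iota_{D_0\otimes D}(d_0\otimes d)=1_{C_0}\otimes d_0\otimes\iota_D(d).
\]
Because the nontrivial $C_0$- and $D_0$-legs of one factor meet $1$'s in the other, these multiply leg by leg to
\[
\iota_{C_0\otimes C}(c_0\otimes c)\cdot\iota_{D_0\otimes D}(d_0\otimes d)=(c_0\otimes d_0)\otimes\iota_C(c)\iota_D(d).
\]

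Finally I would take closed linear spans. Using $C\boxtimes_\bichar D=\iota_C(C)\cdot\iota_D(D)$ on the right, together with the standard distributivity of closed linear span over independent tensor legs, the image closed linear span on the left becomes $C_0\otimes D_0\otimes(C\boxtimes_\bichar D)$, establishing~\eqref{eq:left_asso}. The only real obstacle is the bookkeeping with leg numbering; once the embeddings have been rewritten in the factored form above, the identification is immediate.
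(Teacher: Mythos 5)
Your proposal is correct and follows essentially the same route as the paper, whose entire proof is the observation that \(\iota_{C_0\otimes C}=\Id_{C_0}\otimes\iota_C\) and \(\iota_{D_0\otimes D}=\Id_{D_0}\otimes\iota_D\) after the obvious reshuffling of legs. You simply spell out the leg bookkeeping and the closed-linear-span step that the paper leaves implicit.
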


\begin{proof}
  The maps \(\iota_{C_0\otimes C}\) and \(\iota_{D_0\otimes D}\) are
  \(\Id_{C_0}\otimes \iota_C\) and \(\Id_{D_0}\otimes \iota_D\),
  respectively.
\end{proof}

\section{Hilbert space representation of the twisted tensor product}
\label{sec:boxtimes_with_covariant_rep}

Let \(\Qgrp{G}{A}\), \(\Qgrp{H}{B}\),
\(\bichar\in\U(\hat{A}\otimes\hat{B})\), \((C,\gamma)\) and
\((D,\delta)\) be as before, so that the twisted tensor product
\(C\boxtimes_\bichar D\) is defined.  We are going to construct a
faithful Hilbert space representation of \(C\boxtimes_\bichar D\)
using covariant Hilbert space representations of \((C,\gamma)\) and
\((D,\delta)\).  This yields an alternative definition
of~\(C\boxtimes_\bichar D\) and shows that \(C\boxtimes_\bichar
D\) does not depend on the Heisenberg pair used
in its construction.

Our new construction uses faithful covariant representations
\((\varphi,U^{\Hils})\) of \((C,\gamma,A)\) and
\((\psi,U^{\Hils[K]})\) of \((D,\delta,B)\) on Hilbert spaces
\(\Hils\) and~\(\Hils[K]\), respectively.  (We will show below that
such faithful covariant representations always exist.)

The bicharacter~\(\bichar\) and the corepresentations provide a
unitary operator~\(Z\) on \(\Hils\otimes\Hils[K]\) as follows:

\begin{theorem}
  \label{the:V-Heis_Comm_corep}
  Let \(U^{\Hils}\in\U(\Hils\otimes A)\) and
  \(U^{\Hils[K]}\in\U(\Hils[K]\otimes B)\) be corepresentations
  of \(\G\) and~\(\G[H]\), respectively.  Then there is a unique
  unitary \(Z\in\U(\Hils\otimes\Hils[K])\) that satisfies
  \begin{equation}
    \label{eq:commutation_between_corep_of_two_quantum_groups}
    U^{\Hils}_{1\alpha} U^{\Hils[K]}_{2\beta} Z_{12}
    = U^{\Hils[K]}_{2\beta} U^{\Hils}_{1\alpha}
    \qquad\text{in }\U(\Hils\otimes\Hils[K]\otimes\Hils[L])
  \end{equation}
  for any \(\bichar\)\nb-Heisenberg pair \((\alpha,\beta)\) on any
  Hilbert space~\(\Hils[L]\).
\end{theorem}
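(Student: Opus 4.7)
The plan is to construct $Z$ using the universal duals. By Section~\ref{sec:univ_qgr}, the corepresentations $U^{\Hils}$ and $U^{\Hils[K]}$ correspond uniquely to representations $\rho\colon\hat{A}^\univ\to\Bound(\Hils)$ and $\hat{\rho}\colon\hat{B}^\univ\to\Bound(\Hils[K])$ with $U^{\Hils}=(\rho\otimes\Id_A)\maxcorep[A]$ and $U^{\Hils[K]}=(\hat{\rho}\otimes\Id_B)\maxcorep[B]$. Let $\bichar^\univ\in\U(\hat{A}^\univ\otimes\hat{B}^\univ)$ be the unique bicharacter lifting~$\bichar$, provided by Theorem~\ref{the:equivalent_notion_of_homomorphisms}(6). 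I set
\[
Z\defeq(\rho\otimes\hat{\rho})(\bichar^\univ).
\]
Uniqueness is then immediate: for any Heisenberg pair~$(\alpha,\beta)$ the defining equation rewrites as $Z_{12}=(U^{\Hils[K]}_{2\beta})^*(U^{\Hils}_{1\alpha})^*U^{\Hils[K]}_{2\beta}U^{\Hils}_{1\alpha}$, so any solution is fully determined.

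For existence I would verify the equation for an arbitrary $\bichar$-Heisenberg pair~$(\alpha,\beta)$ on~$\Hils[L]$. Substituting the formulas above for $U^{\Hils}$ and $U^{\Hils[K]}$ and applying $\rho\otimes\hat\rho\otimes\Id_{\Comp(\Hils[L])}$ reduces the problem to proving the universal Heisenberg identity
\[
\maxcorep[A]_{1\alpha}\maxcorep[B]_{2\beta}\bichar^\univ_{12}=\maxcorep[B]_{2\beta}\maxcorep[A]_{1\alpha}
\qquad\text{in }\U(\hat{A}^\univ\otimes\hat{B}^\univ\otimes\Comp(\Hils[L])).
\]
To establish this, set $W\defeq(\maxcorep[B]_{2\beta})^*(\maxcorep[A]_{1\alpha})^*\maxcorep[B]_{2\beta}\maxcorep[A]_{1\alpha}$ and argue in three steps: (a)~$W$ is trivial in the third leg, so $W=\tilde{Z}_{12}$ for some $\tilde{Z}\in\U(\hat{A}^\univ\otimes\hat{B}^\univ)$; (b)~$\tilde{Z}$ is a bicharacter, via the two bicharacter identities obeyed by $\maxcorep[A]$ and $\maxcorep[B]$; (c)~$(\Lambda_{\hat{A}}\otimes\Lambda_{\hat{B}})\tilde{Z}=\bichar$, by applying the reducing maps to recover $(\multunit[B]_{2\beta})^*(\multunit[A]_{1\alpha})^*\multunit[B]_{2\beta}\multunit[A]_{1\alpha}$ and invoking the Heisenberg condition~\eqref{eq:V-Heisenberg_pair}. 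The uniqueness of the universal lift of~$\bichar$ then forces $\tilde Z=\bichar^\univ$.

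The hard part is step~(a), showing that the commutator $W$ lands in the first two tensor legs, i.e.\ that the dependence on~$\Hils[L]$ and on the specific pair $(\alpha,\beta)$ drops out. This is where the bulk of the work sits, and it uses the bicharacter identities of $\maxcorep[A]$ and $\maxcorep[B]$ in both legs together with the reformulation of the Heisenberg pair condition in terms of the right quantum group morphisms, Lemma~\ref{lemm:equiv_cond_V_Heisenberg_pair}(3)--(4), which turn the raw commutator~$W$ into a conjugate that visibly involves only~$\bichar$. An alternative route, which sidesteps the universal identity, is to construct $Z$ explicitly from the concrete Heisenberg pair built in Lemma~\ref{lem:existence_of_canonical_Heisenberg_pair} using the defining multiplicative unitaries from Example~\ref{exa:Heisenberg_from_Multunit}, to check the defining equation there by direct computation, and then to transfer the result to arbitrary Heisenberg pairs by a cocycle-type compatibility argument using uniqueness.
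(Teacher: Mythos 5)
Your strategy is the one the paper itself uses --- pass to the universal quantum groups, define \(Z\) by applying \(\rho\otimes\hat\rho\) to the universal lift of~\(\bichar\), and reduce \eqref{eq:commutation_between_corep_of_two_quantum_groups} to a universal Heisenberg identity --- but there is a consistent adjoint error and, more importantly, the step you yourself flag as the ``hard part'' is left unproved. On the adjoint: the Heisenberg condition \eqref{eq:V-Heisenberg_pair} reads \(\multunit[A]_{1\alpha}\multunit[B]_{2\beta}=\multunit[B]_{2\beta}\multunit[A]_{1\alpha}\bichar_{12}\), so its universal lift must be \(\maxcorep[A]_{1\alpha}\maxcorep[B]_{2\beta}=\maxcorep[B]_{2\beta}\maxcorep[A]_{1\alpha}\bichar^\univ_{12}\); the identity you state, with \(\bichar^\univ_{12}\) multiplied on the left-hand side, reduces to the Heisenberg relation with \(\bichar^*\) in place of \(\bichar\) and is false in general. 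Correspondingly, your commutator \(W=(\maxcorep[B]_{2\beta})^*(\maxcorep[A]_{1\alpha})^*\maxcorep[B]_{2\beta}\maxcorep[A]_{1\alpha}\) reduces to \(\bichar^*_{12}\), not \(\bichar_{12}\), it is \(W^*=(\maxcorep[A]_{1\alpha})^*(\maxcorep[B]_{2\beta})^*\maxcorep[A]_{1\alpha}\maxcorep[B]_{2\beta}\) that is a bicharacter in the paper's convention and equals \(\bichar^\univ_{12}\), and the correct formula is \(Z=(\rho\otimes\hat\rho)(\bichar^\univ)^*\), with the adjoint.

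The genuine gap is your step~(a). You acknowledge that showing the commutator is trivial in the third leg is where ``the bulk of the work sits,'' but you offer only a gesture towards Lemma~\ref{lemm:equiv_cond_V_Heisenberg_pair}, and your steps (b) and~(c) are set up to depend on (a) having been established first. The paper avoids ever proving (a) directly: it checks by a routine computation that \(T\defeq(\maxcorep[A]_{1\alpha})^*(\maxcorep[B]_{2\beta})^*\maxcorep[A]_{1\alpha}\maxcorep[B]_{2\beta}\) is a bicharacter \emph{in the first two legs} of \(\U(\hat{A}^\univ\otimes\hat{B}^\univ\otimes\Comp(\Hils[L]))\), that it lifts \(\bichar_{12}\) under the reducing maps (this is exactly where the Heisenberg condition enters), and then invokes the uniqueness of bicharacter lifts (\cite{Meyer-Roy-Woronowicz:Homomorphisms}*{Lemma 4.6}, applied once per leg) to conclude \(T=\bichar^\univ_{12}\); triviality in the third leg is a \emph{consequence} of this, not a prerequisite. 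Your proposed alternative route does not close the gap either: verifying the equation for the one concrete Heisenberg pair of Lemma~\ref{lem:existence_of_canonical_Heisenberg_pair} and then ``transferring by uniqueness'' is circular, because uniqueness of \(Z\) for a fixed pair does not show that the \(Z\)'s obtained from different Heisenberg pairs coincide --- that independence is precisely the content of the theorem.
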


With this unitary~\(Z\), define representations \(\varphi_1\)
and~\(\tilde\psi_2\) of \(C\) and~\(D\) on \(\Hils\otimes\Hils[K]\) by
\begin{align*}
  \varphi_1(c) &\defeq \varphi(c)\otimes 1_{\Hils[K]},\\
  \tilde\psi_2(d) &\defeq Z (1_{\Hils}\otimes \psi(d))Z^*.
\end{align*}

\begin{theorem}
  \label{the:crossed_prod_for_cov_reps_in_Hilb_sp_lavel}
  Let \((\varphi,U^{\Hils})\) and \((\psi,U^{\Hils[K]})\) be faithful
  covariant representations of \((C,\gamma,A)\) and \((D,\delta,B)\)
  on Hilbert spaces \(\Hils\) and~\(\Hils[K]\), respectively.
  Construct \(\varphi_1\) and~\(\tilde\psi_2\) as above.  Then there
  is a unique faithful representation \(\rho\colon C\boxtimes_\bichar
  D\to\Bound(\Hils\otimes\Hils[K])\) with \(\rho\circ\iota_C =
  \varphi_1\) and \(\rho\circ\iota_D = \tilde\psi_2\).
\end{theorem}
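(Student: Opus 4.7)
My plan is to build $\rho$ by embedding $C\boxtimes_\bichar D$ into $\Bound(\Hils\otimes\Hils[K]\otimes\Hils[L])$ via a faithful auxiliary representation and showing that this embedding factors through $\rho\otimes 1_{\Hils[L]}$ up to conjugation by a single unitary. Fix any $\bichar$\nb-Heisenberg pair $(\alpha,\beta)$ on a Hilbert space $\Hils[L]$ and realize $C\boxtimes_\bichar D\subseteq\Mult(C\otimes D\otimes\Comp(\Hils[L]))$ via $\iota_C,\iota_D$. Let $\tilde\rho\defeq\varphi\otimes\psi\otimes\Id_{\Comp(\Hils[L])}$, extended to multipliers; since $\varphi$ and $\psi$ are faithful, $\tilde\rho$ is faithful on the spatial tensor product $C\otimes D\otimes\Comp(\Hils[L])$.

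The heart of the argument is the identity
\[
\tilde\rho\bigl(\iota_C(c)\iota_D(d)\bigr) = V\bigl((\varphi_1(c)\tilde\psi_2(d))\otimes 1_{\Hils[L]}\bigr)V^*,\qquad V\defeq U^{\Hils}_{1\alpha}U^{\Hils[K]}_{2\beta}.
\]
To derive it, I would use covariance to rewrite $\tilde\rho(\iota_C(c))=U^{\Hils}_{1\alpha}(\varphi(c)\otimes 1\otimes 1)(U^{\Hils}_{1\alpha})^*$ and likewise for $\tilde\rho(\iota_D(d))$, form the product, commute $\varphi(c)\otimes 1\otimes 1$ past $U^{\Hils[K]}_{2\beta}$ and $(U^{\Hils}_{1\alpha})^*$ past $1\otimes\psi(d)\otimes 1$ (disjoint legs), and then apply Theorem~\ref{the:V-Heis_Comm_corep}'s relation $U^{\Hils}_{1\alpha}U^{\Hils[K]}_{2\beta}Z_{12}=U^{\Hils[K]}_{2\beta}U^{\Hils}_{1\alpha}$ together with its adjoint form $(U^{\Hils}_{1\alpha})^*(U^{\Hils[K]}_{2\beta})^*=Z_{12}^*(U^{\Hils[K]}_{2\beta})^*(U^{\Hils}_{1\alpha})^*$ to introduce the pair $Z_{12}Z_{12}^*$ that converts $1\otimes\psi(d)\otimes 1$ into $Z_{12}(1\otimes\psi(d)\otimes 1)Z_{12}^*=\tilde\psi_2(d)\otimes 1_{\Hils[L]}$. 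By linearity and norm-continuity one concludes $\tilde\rho(C\boxtimes_\bichar D)\subseteq V(\Bound(\Hils\otimes\Hils[K])\otimes 1_{\Hils[L]})V^*$, so the prescription $\rho(x)\otimes 1_{\Hils[L]}\defeq V^*\tilde\rho(x)V$ defines a $*$\nb-homomorphism $\rho\colon C\boxtimes_\bichar D\to\Bound(\Hils\otimes\Hils[K])$. The intertwining identities $\rho\circ\iota_C=\varphi_1$ and $\rho\circ\iota_D=\tilde\psi_2$ follow on applying the generator formula together with an approximate unit in the complementary factor, and uniqueness of $\rho$ is immediate from the density of $\iota_C(C)\cdot\iota_D(D)$ in $C\boxtimes_\bichar D$.

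Faithfulness of $\rho$ reduces to faithfulness of $\tilde\rho$ on $C\boxtimes_\bichar D$: if $\rho(x)=0$ then $\tilde\rho(x)=V(\rho(x)\otimes 1)V^*=0$. If in turn $\tilde\rho(x)=0$ for $x\in C\boxtimes_\bichar D$, then for every $k\in\Comp(\Hils[L])_3$ the product $x\cdot k$ lies in $(C\boxtimes_\bichar D)\cdot\Comp(\Hils[L])_3=C\otimes D\otimes\Comp(\Hils[L])$ by Lemma~\ref{lem:boxtimes_Podles}, where $\tilde\rho$ is faithful, so $x\cdot k=0$; further multiplication then shows that $x$ annihilates all of $C\otimes D\otimes\Comp(\Hils[L])$, an essential ideal of its multiplier algebra, forcing $x=0$. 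The main obstacle will be the central identity above: several of the operators involved \emph{almost} commute, and Theorem~\ref{the:V-Heis_Comm_corep}'s relation and its adjoint must be applied at precisely the right positions so that the extraneous $Z_{12}$ produced by $(U^{\Hils}_{1\alpha})^* U^{\Hils[K]}_{2\beta}=U^{\Hils[K]}_{2\beta}Z_{12}(U^{\Hils}_{1\alpha})^*$ combines with its inverse to assemble the conjugation $Z_{12}(1\otimes\psi(d)\otimes 1)Z_{12}^*$ inside $\Ad_V$; once this identity is secured, the remaining steps are routine.
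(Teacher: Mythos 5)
Your proposal is correct and follows essentially the same route as the paper: the paper also represents \(C\boxtimes_\bichar D\) faithfully via \(\varphi\otimes\psi\otimes\Id_{\Comp(\Hils[L])}\) and then shows (Lemma~\ref{lem:conjugate_rep_boxtimes}) that conjugation by \(V=U^{\Hils}_{1\alpha}U^{\Hils[K]}_{2\beta}\) carries the pair \((\varphi_1,\Ad_{Z_{12}}\circ\psi_2)\) to \(\bigl((\varphi\otimes\alpha)\gamma_{13},(\psi\otimes\beta)\delta_{23}\bigr)\), which is exactly your central identity. Your explicit faithfulness argument via Lemma~\ref{lem:boxtimes_Podles} just spells out what the paper leaves implicit (that the extension of a faithful nondegenerate representation to multipliers is faithful), so the two proofs are the same in substance.
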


\begin{example}
  \label{exa:boxtimes_trivial_bicharacter}
  If \(\bichar=1\), then we may take \(Z=1\).  Thus
  \(\tilde\psi_2=\psi_2\) and the crossed product is simply the
  minimal tensor product \(C\otimes D\).
\end{example}

In the rest of this section, we prove the claims above and use
the main theorem to show that the twisted tensor product does
not depend on auxiliary choices.

First we construct faithful covariant representations:

\begin{example}
  \label{exa:existence_of_faithful_covariant_representation}
  Let \(\varphi_0\colon C\to\Bound(\Hils_0)\) be any faithful
  Hilbert space representation.  Let \(\HeisPair{\pi}\) be a
  faithful \(\G\)\nb-Heisenberg pair on a Hilbert
  space~\(\Hils_\pi\); this exists because of
  Example~\ref{exa:Heisenberg_from_Multunit}.  Let
  \(\Hils\defeq \Hils_0\otimes\Hils_\pi\) and identify
  \(\Comp(\Hils)\cong \Comp(\Hils_0)\otimes\Comp(\Hils_\pi)\).
  The unitary \(U\defeq 1_{\Hils_0}\otimes
  \multunit[A]_{\hat{\pi}2}\in \U(\Comp(\Hils)\otimes A)\)
  is a corepresentation; since \(\varphi_0\), \(\pi\)
  and~\(\gamma\) are faithful morphisms, \(\varphi\defeq
  (\varphi_0\otimes\pi)\circ\gamma\colon C\to \Bound(\Hils)\)
  is a faithful representation.  The following computation in
  \(\Mult(C\otimes\Comp(\Hils_\pi)\otimes A)\) implies the
  covariance condition for \((\varphi,U)\):
  \[
  ((\Id_C\otimes\pi)\gamma\otimes\Id_A)\gamma(c)
  = (\Id_C\otimes(\pi\otimes\Id_A)\Comult[A])\gamma(c)
  = (\multunit[A]_{\hat{\pi}3}) (\gamma(c)\otimes 1_A)
  (\multunit[A]_{\hat{\pi}3})^*
  \]
  for all \(c\in C\), where we used \eqref{eq:right_coaction} and
  Lemma~\ref{lemm:equiv_cond_V_Heisenberg_pair}.3 with \(B=\hat{A}\)
  and \(\Delta_R=\Comult[A]\).
\end{example}

Now we prove Theorem~\ref{the:V-Heis_Comm_corep}.  The uniqueness
of~\(Z\) is clear from
\[
Z_{12} = (U^{\Hils[K]}_{2\beta})^*
(U^{\Hils}_{1\alpha})^* U^{\Hils[K]}_{2\beta} U^{\Hils}_{1\alpha}.
\]
Existence means that the operator on the right acts identically on
the third leg and does not depend on the Heisenberg pair.  The
quickest way to prove this uses universal quantum groups to turn
corepresentations into representations.  Let
\(\rho_1\colon\hat{A}^\univ\to\Bound(\Hils)\) and \(\rho_2\colon
\hat{B}^\univ\to\Bound(\Hils[K])\) be the unique representations
with \((\rho_1\otimes\Id_A)\maxcorep[A]=U^{\Hils}\) and
\((\rho_2\otimes\Id_B)\maxcorep[B]=U^{\Hils[K]}\) (see
Section~\ref{sec:univ_qgr}).

The bicharacter \(\bichar\in\U(\hat{A}\otimes\hat{B})\) lifts uniquely
to a bicharacter \(\bichar^\univ\in
\U(\hat{A}^\univ\otimes\hat{B}^\univ)\) by
Theorem~\ref{the:equivalent_notion_of_homomorphisms}.  We claim that
\begin{equation}
  \label{eq:Z_via_universal_lift}
  Z\defeq (\rho_1\otimes\rho_2)(\bichar^\univ)^*
  \in\Bound(\Hils\otimes\Hils[K])
\end{equation}
verifies~\eqref{eq:commutation_between_corep_of_two_quantum_groups}
(for any \(\bichar\)\nb-Heisenberg pair~\((\alpha,\beta)\)).
(Our formulation of Theorem~\ref{the:V-Heis_Comm_corep}
highlights the property of the operator
\((\rho_1\otimes\rho_2)(\bichar^\univ)^*\) that is crucial for
the proof of
Theorem~\ref{the:crossed_prod_for_cov_reps_in_Hilb_sp_lavel},
and it avoids universal quantum groups.)

We will actually prove
\begin{equation}
  \label{eq:V-Heis_pair_in_univ_level}
  \maxcorep[A]_{1\alpha}\maxcorep[B]_{2\beta}
  = \maxcorep[B]_{2\beta}\maxcorep[A]_{1\alpha}\bichar^\univ_{12}
  \qquad\text{in }\U(\hat{A}^\univ\otimes \hat{B}^\univ\otimes
  \Comp(\Hils[L]))
\end{equation}
for any \(\bichar\)\nb-Heisenberg pair \((\alpha,\beta)\).  Applying
\(\rho_1\) and~\(\rho_2\) to the first two legs then
gives~\eqref{eq:commutation_between_corep_of_two_quantum_groups}
because \((\rho_1\otimes\Id_A)\maxcorep[A]=U^{\Hils}\) and
\((\rho_2\otimes\Id_B)\maxcorep[B]=U^{\Hils[K]}\).

When we apply the reducing morphisms \(\Lambda_{\hat{A}}\colon
\hat{A}^\univ\to \hat{A}\) and \(\Lambda_{\hat{B}}\colon
\hat{B}^\univ\to \hat{B}\) to the first two legs
in~\eqref{eq:V-Heis_pair_in_univ_level}, we get
\(\multunit[A]_{1\alpha} \multunit[B]_{2\beta} = \multunit[B]_{2\beta}
\multunit[A]_{1\alpha} \bichar_{12}\), which is exactly the definition
of a Heisenberg pair (see
Definition~\ref{def:V-Heisenberg_pair}).  A routine computation shows
that
\[
T\defeq (\maxcorep[A]_{1\alpha})^*
(\maxcorep[B]_{2\beta})^* \maxcorep[A]_{1\alpha}
\maxcorep[B]_{2\beta}\in \U(\hat{A}^\univ\otimes
\hat{B}^\univ\otimes \Comp(\Hils[L]))
\]
is a character in the first two legs, that is,
\((\DuComult[A^\univ]\otimes\Id_{\hat{B}^\univ}\otimes
\Id_{\Hils[L]})T=T_{234}T_{134}\) and
\((\Id_{\hat{A}^\univ}\otimes\DuComult[B^\univ]\otimes
\Id_{\Hils[L]})T=T_{124}T_{134}\).
Thus \(T\) and~\(\bichar^\univ_{12}\) are two bicharacters in
\(\U(\hat{A}^\univ\otimes \hat{B}^\univ\otimes \Comp(\Hils[L]))\)
that both lift the bicharacter~\(\bichar_{12}\) in
\(\U(\hat{A}\otimes \hat{B}\otimes \Comp(\Hils[L]))\).  Using
\cite{Meyer-Roy-Woronowicz:Homomorphisms}*{Lemma 4.6} twice, we get
that any such bicharacter has a unique lifting.  Thus
\(T=\bichar^\univ_{12}\) as asserted.  This finishes the proof of
Theorem~\ref{the:V-Heis_Comm_corep}.

Now we come to the proof of
Theorem~\ref{the:crossed_prod_for_cov_reps_in_Hilb_sp_lavel}.  The
Hilbert space representation
\[
\varphi\otimes\psi\otimes\Id\colon
C\otimes D\otimes \Comp(\Hils[L]) \to
\Bound(\Hils\otimes \Hils[K] \otimes \Hils[L])
\]
is faithful because \(\varphi\) and~\(\psi\) are faithful.  Hence the
pair of representations
\begin{align*}
  (\varphi\otimes\alpha)\gamma_{13}&\colon C\to
  \Bound(\Hils\otimes \Hils[K] \otimes \Hils[L])\\
  (\psi\otimes\beta)\delta_{23}&\colon D\to
  \Bound(\Hils\otimes \Hils[K] \otimes \Hils[L])
\end{align*}
of \(C\) and~\(D\) gives a faithful representation of
\(C\boxtimes_\bichar D\); that is, there is a unique faithful representation of \(C\boxtimes_\bichar D\) that gives the above two
representations when composed with \(\iota_C\) and~\(\iota_D\).

\begin{lemma}
  \label{lem:conjugate_rep_boxtimes}
  The pair of representations \((\varphi_1,
  \Ad_{Z_{12}}\circ\psi_2)\) of \((C,D)\) on \(\Hils\otimes
  \Hils[K] \otimes \Hils[L]\) is unitarily equivalent to the
  pair \(\bigl((\varphi\otimes\alpha)\gamma_{13},
  (\psi\otimes\beta)\delta_{23}\bigr)\) on the same Hilbert
  space through conjugation by the unitary
  \(U^{\Hils}_{1\alpha} U^{\Hils[K]}_{2\beta}\).
\end{lemma}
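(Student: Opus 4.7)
The plan is to check the two intertwining relations
\[
W\varphi_1(c) W^* = (\varphi\otimes\alpha)\gamma(c)_{13},
\qquad
W\bigl(Z_{12}\psi_2(d)Z_{12}^*\bigr)W^* = (\psi\otimes\beta)\delta(d)_{23}
\]
separately, where \(W\defeq U^{\Hils}_{1\alpha}U^{\Hils[K]}_{2\beta}\).  Each
of them will come down to the covariance condition~\eqref{eq:covariant_corep}
for one of the two covariant representations, modulo some bookkeeping
of which legs of \(\Hils\otimes\Hils[K]\otimes\Hils[L]\) carry which
operators.  The defining equation of~\(Z\) from
Theorem~\ref{the:V-Heis_Comm_corep} is used only to handle the second
intertwining, where it serves to absorb the conjugation by~\(Z_{12}\).

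For the first relation, I would note that
\(\varphi_1(c)=\varphi(c)\otimes 1_{\Hils[K]}\otimes 1_{\Hils[L]}\)
is supported on leg~\(1\), whereas \(U^{\Hils[K]}_{2\beta}\) is supported on
legs \((2,3)\); they therefore commute.  The conjugation by~\(W\) thus
collapses to conjugation by~\(U^{\Hils}_{1\alpha}\), and applying
\(\Id_{\Hils}\otimes\alpha\) to the covariance identity
\((\varphi\otimes\Id_A)\gamma(c)=U^{\Hils}(\varphi(c)\otimes 1_A)(U^{\Hils})^*\)
and reading the result in legs~\((1,3)\) yields exactly
\((\varphi\otimes\alpha)\gamma(c)_{13}\).

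For the second relation, the key move is to invoke~\eqref{eq:commutation_between_corep_of_two_quantum_groups},
which rewrites
\(WZ_{12}=U^{\Hils[K]}_{2\beta}U^{\Hils}_{1\alpha}\).
After this substitution, the conjugation by~\(Z_{12}\) disappears and one
is left with conjugating \(\psi_2(d)=1_{\Hils}\otimes\psi(d)\otimes 1_{\Hils[L]}\)
by \(U^{\Hils[K]}_{2\beta}U^{\Hils}_{1\alpha}\).  Since \(\psi_2(d)\) is
supported on leg~\(2\) while \(U^{\Hils}_{1\alpha}\) is supported on legs
\((1,3)\), the \(U^{\Hils}_{1\alpha}\) factor commutes past~\(\psi_2(d)\)
and cancels with its adjoint, so only the conjugation by
\(U^{\Hils[K]}_{2\beta}\) survives.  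Applying \(\Id_{\Hils[K]}\otimes\beta\)
to the covariance identity for \((\psi, U^{\Hils[K]})\) and reading in
legs~\((2,3)\) then delivers \((\psi\otimes\beta)\delta(d)_{23}\).
There is no genuine obstacle: the lemma is essentially a repackaging of the
two covariance conditions and the defining relation of~\(Z\), and the only
point requiring attention is the careful tracking of which leg each
operator occupies.
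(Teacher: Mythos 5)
Your proposal is correct and follows essentially the same route as the paper's proof: commute \(U^{\Hils[K]}_{2\beta}\) past \(\varphi_1(c)\) and apply the covariance condition for \((\varphi,U^{\Hils})\) with \(\alpha\) in the \(A\)-leg, then use \eqref{eq:commutation_between_corep_of_two_quantum_groups} to replace \(U^{\Hils}_{1\alpha}U^{\Hils[K]}_{2\beta}Z_{12}\) by \(U^{\Hils[K]}_{2\beta}U^{\Hils}_{1\alpha}\), commute \(U^{\Hils}_{1\alpha}\) past \(\psi_2(d)\), and finish with the covariance condition for \((\psi,U^{\Hils[K]})\). The leg bookkeeping is exactly as in the paper, so there is nothing to add.
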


\begin{proof}
  We must prove
  \begin{align*}
    U^{\Hils}_{1\alpha} U^{\Hils[K]}_{2\beta} (\varphi(c)\otimes
    1_{\Hils[K]}\otimes 1_{\Hils[L]}) (U^{\Hils[K]}_{2\beta})^*
    (U^{\Hils}_{1\alpha})^*
    &= (\varphi\otimes\alpha)\gamma_{13}(c),\\
    U^{\Hils}_{1\alpha} U^{\Hils[K]}_{2\beta} Z_{12} (1_{\Hils}\otimes
    \psi(d)\otimes 1_{\Hils[L]}) Z_{12}^* (U^{\Hils[K]}_{2\beta})^*
    (U^{\Hils}_{1\alpha})^*
    &= (\psi\otimes\beta)\delta_{23}(d)
  \end{align*}
  for all \(c\in C\), \(d\in D\).  To check the first equality,
  we use first that~\(U^{\Hils[K]}_{2\beta}\) commutes
  with~\(\varphi(c)_1\) because both act on different legs, and
  secondly the covariance condition~\eqref{eq:covariant_corep} for
  \((\varphi,U^{\Hils})\) with~\(\alpha\) applied to the leg~\(A\):
  \[
  U^{\Hils}_{1\alpha} U^{\Hils[K]}_{2\beta} (\varphi(c)\otimes
  1_{\Hils[K]}\otimes 1_{\Hils[L]}) (U^{\Hils[K]}_{2\beta})^*
  (U^{\Hils}_{1\alpha})^*
  = U^{\Hils}_{1\alpha} (\varphi(c)\otimes 1_{\Hils[K]}\otimes
  1_{\Hils[L]}) (U^{\Hils}_{1\alpha})^*
  = (\varphi\otimes\alpha)\gamma(c)_{13}.
  \]
  A similar computation gives the second equality:
  \begin{multline*}
    U^{\Hils}_{1\alpha} U^{\Hils[K]}_{2\beta} Z_{12} (1_{\Hils}\otimes
    \psi(d)\otimes 1_{\Hils[L]}) Z_{12}^* (U^{\Hils[K]}_{2\beta})^*
    (U^{\Hils}_{1\alpha})^*
    = U^{\Hils[K]}_{2\beta} U^{\Hils}_{1\alpha} \psi(d)_2
    (U^{\Hils}_{1\alpha})^* (U^{\Hils[K]}_{2\beta})^*
    \\= U^{\Hils[K]}_{2\beta} \psi(d)_2 (U^{\Hils[K]}_{2\beta})^*
    = (\psi\otimes\beta)\delta(d)_{23};
  \end{multline*}
  we first
  use~\eqref{eq:commutation_between_corep_of_two_quantum_groups};
  secondly, that \(U^{\Hils}_{1\alpha}\) and~\(\psi(d)_2\) act in
  different legs to commute them; and thirdly the covariance
  condition~\eqref{eq:covariant_corep} for \((\psi,U^{\Hils[K]})\)
  with~\(\beta\) applied to the leg~\(B\).
\end{proof}

We remarked above that the pair of representations
\(\bigl((\varphi\otimes\alpha)\gamma_{13},
(\psi\otimes\beta)\delta_{23}\bigr)\) generates a faithful
representation of~\(C\boxtimes_\bichar D\).
Lemma~\ref{lem:conjugate_rep_boxtimes} shows that this
representation is unitarily equivalent to another
representation that restricts to \(\varphi_1\otimes
1_{\Hils[L]}\) and
\(\Ad_{Z_{12}}\circ\psi_2=\tilde\psi_2\otimes 1_{\Hils[L]}\) on
\(C\) and~\(D\), respectively.  The latter representation is
\(\rho\otimes 1_{\Hils[L]}\) for a faithful representation
of~\(C\boxtimes_\bichar D\) on \(\Hils\otimes\Hils[K]\).  This
is the faithful representation whose existence is asserted in
Theorem~\ref{the:crossed_prod_for_cov_reps_in_Hilb_sp_lavel}.
Uniqueness is clear because \(C\boxtimes_\bichar D =
\iota_C(C)\cdot \iota_D(D)\).  This finishes the proof of
Theorem~\ref{the:crossed_prod_for_cov_reps_in_Hilb_sp_lavel}.

\medskip

\begin{theorem}
  \label{lem:boxtimes_well-defined}
  In the notation of
  Theorem~\textup{\ref{the:crossed_prod_for_cov_reps_in_Hilb_sp_lavel}},
  the subspace
  \[
  C\hboxt_\bichar D \defeq \varphi_1(C)\cdot \tilde\psi_2(D)
  \subseteq \Bound(\Hils\otimes\Hils[K])
  \]
  is a \(\Cst\)\nb-subalgebra and \((C\hboxt_\bichar
  D,\varphi_1,\tilde\psi_2)\) is a crossed product of \(C\)
  and~\(D\).  Up to equivalence of crossed products, it does
  not depend on \((\varphi,U^{\Hils})\) and
  \((\psi,U^{\Hils[K]})\).

  The crossed product \((C\boxtimes_\bichar
  D,\iota_C,\iota_D)\) is equivalent to \((C\hboxt_\bichar
  D,\varphi_1,\tilde\psi_2)\) and, up to equivalence of crossed
  products, does not depend on the Heisenberg
  pair~\((\alpha,\beta)\).
\end{theorem}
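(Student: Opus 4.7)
The proof will be essentially a bookkeeping argument built on top of Theorem~\ref{the:crossed_prod_for_cov_reps_in_Hilb_sp_lavel}. The plan is to first use that theorem to deduce everything about the single crossed product \((C\hboxt_\bichar D,\varphi_1,\tilde\psi_2)\) and its equivalence with \((C\boxtimes_\bichar D,\iota_C,\iota_D)\), and then to play the two descriptions off against each other to get the two independence statements.

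First I would note that Theorem~\ref{the:crossed_prod_for_cov_reps_in_Hilb_sp_lavel} provides a unique faithful representation \(\rho\colon C\boxtimes_\bichar D\to\Bound(\Hils\otimes\Hils[K])\) with \(\rho\circ\iota_C=\varphi_1\) and \(\rho\circ\iota_D=\tilde\psi_2\). Since \(C\boxtimes_\bichar D=\iota_C(C)\cdot\iota_D(D)\) by construction and \(\rho\) is a \Star{}homomorphism, its image equals \(\varphi_1(C)\cdot\tilde\psi_2(D)=C\hboxt_\bichar D\). Because \(\rho\) is injective, this image is a \(\Cst\)\nb-subalgebra of \(\Bound(\Hils\otimes\Hils[K])\), the maps \(\varphi_1\in\Mor(C,C\hboxt_\bichar D)\) and \(\tilde\psi_2\in\Mor(D,C\hboxt_\bichar D)\) are nondegenerate with \(\varphi_1(C)\cdot\tilde\psi_2(D)=C\hboxt_\bichar D\), so \((C\hboxt_\bichar D,\varphi_1,\tilde\psi_2)\) is a crossed product in the sense of Definition~\ref{def:crossed_product}. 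The intertwining relations \(\rho\circ\iota_C=\varphi_1\) and \(\rho\circ\iota_D=\tilde\psi_2\) say exactly that \(\rho\) is an equivalence of crossed products in the sense of Definition~\ref{def:equiv_of_crossed_prod}, proving the equivalence statement.

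For independence from the covariant representations I would fix a \(\bichar\)\nb-Heisenberg pair \((\alpha,\beta)\) and consider two choices of faithful covariant representations \((\varphi,U^{\Hils}),(\psi,U^{\Hils[K]})\) and \((\varphi',U^{\Hils'}),(\psi',U^{\Hils[K]'})\), producing two crossed products \((C\hboxt_\bichar D,\varphi_1,\tilde\psi_2)\) and \((C\hboxt_\bichar D)',\varphi_1',\tilde\psi_2')\). Applying Theorem~\ref{the:crossed_prod_for_cov_reps_in_Hilb_sp_lavel} to each yields equivalences of both with the fixed crossed product \((C\boxtimes_\bichar D,\iota_C,\iota_D)\); composing these equivalences (inverting one) gives the desired equivalence between the two. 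Symmetrically, for independence from the Heisenberg pair I would fix one choice of faithful covariant representations (which exist by Example~\ref{exa:existence_of_faithful_covariant_representation}) and observe the crucial point: the target crossed product \((C\hboxt_\bichar D,\varphi_1,\tilde\psi_2)\) depends only on \(\bichar\) and on the covariant representations, not on the Heisenberg pair, because \(Z\) in~\eqref{eq:commutation_between_corep_of_two_quantum_groups} was proved in Theorem~\ref{the:V-Heis_Comm_corep} to be independent of \((\alpha,\beta)\). Thus any two Heisenberg pairs \((\alpha,\beta)\) and \((\alpha',\beta')\) yield, via Theorem~\ref{the:crossed_prod_for_cov_reps_in_Hilb_sp_lavel}, equivalences of their respective \(C\boxtimes_\bichar D\) with \emph{the same} \(C\hboxt_\bichar D\), and composing these equivalences transports one \(C\boxtimes_\bichar D\) to the other compatibly with \(\iota_C\) and \(\iota_D\).

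There is no real obstacle here: the substantive analytic content was already absorbed into Theorem~\ref{the:crossed_prod_for_cov_reps_in_Hilb_sp_lavel} (and, one level below, into Proposition~\ref{prop:Commutation_V-Heisen_V-anti_Heisen} and Theorem~\ref{the:V-Heis_Comm_corep}). The only thing requiring a moment's attention is to make sure the equivalences compose correctly as crossed products, i.e.\ that they intertwine the canonical maps on both sides; but this is automatic from the uniqueness clause of Theorem~\ref{the:crossed_prod_for_cov_reps_in_Hilb_sp_lavel} together with the fact that \(\iota_C(C)\cdot\iota_D(D)\) spans \(C\boxtimes_\bichar D\), so the equivalence is determined by its action on \(\iota_C(C)\) and \(\iota_D(D)\).
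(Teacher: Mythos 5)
Your proposal is correct and follows essentially the same route as the paper's proof: both deduce from Theorem~\ref{the:crossed_prod_for_cov_reps_in_Hilb_sp_lavel} that \(C\hboxt_\bichar D=\rho(C\boxtimes_\bichar D)\) is a crossed product equivalent to \((C\boxtimes_\bichar D,\iota_C,\iota_D)\), and then exploit the two complementary facts that \(C\boxtimes_\bichar D\) is defined without reference to the covariant representations while \(C\hboxt_\bichar D\) is independent of the Heisenberg pair because \(Z\) is, transferring each independence statement across the equivalence. Your extra remark on why the equivalences compose compatibly with the canonical embeddings is a harmless elaboration of what the paper leaves implicit.
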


\begin{proof}
  Since \(C\hboxt_\bichar D=\rho(C\boxtimes_\bichar D)\) and
  \(\rho\circ\iota_C=\varphi_1\),
  \(\rho\circ\iota_D=\tilde\psi_2\), by
  Theorem~\ref{the:crossed_prod_for_cov_reps_in_Hilb_sp_lavel},
  \(C\hboxt_\bichar D\) is a \(\Cst\)\nb-algebra,
  \((C\hboxt_\bichar D,\varphi_1,\tilde\psi_2)\) is a crossed
  product of \(C\) and~\(D\), and it is equivalent to the
  crossed product \((C\boxtimes_\bichar D,\iota_C,\iota_D)\).

  Since the unitary~\(Z\) is the same for all Heisenberg pairs
  \((\alpha,\beta)\), the crossed product \((C\hboxt_\bichar
  D,\varphi_1,\tilde\psi_2)\) does not depend on
  \((\alpha,\beta)\); hence up to equivalence
  \((C\boxtimes_\bichar D,\iota_C,\iota_D)\) does not depend on
  \((\alpha,\beta)\).  And since \((C\boxtimes_\bichar
  D,\iota_C,\iota_D)\) does not depend on
  \((\varphi,U^{\Hils})\) and \((\psi,U^{\Hils[K]})\), neither
  does \((C\hboxt_\bichar D,\varphi_1,\tilde\psi_2)\), up to
  equivalence.
\end{proof}

As a special case of
Theorem~\ref{the:crossed_prod_for_cov_reps_in_Hilb_sp_lavel}, the
usual spatial tensor product \(C\otimes D\) does not depend on the
chosen faithful representations of \(C\) and~\(D\).  But we have not
reproved this classical result.  Rather, we have reduced analogous
statements for noncommutative tensor products to this case by
embedding the latter into commutative tensor products with more
factors.

\section{Properties of the twisted tensor product}
\label{sec:properties_boxtimes}

In this section, we establish several functoriality properties
of the twisted tensor product.  We also discuss exactness for
equivariantly semi-split extensions and invariance under
Morita--Rieffel equivalence, which gives a result about cocycle
conjugacy.

We begin with an easy symmetry property:

\begin{proposition}
  \label{pro:symmetry_of_twisted_tens_prod}
  The crossed products \((C\boxtimes_{\bichar}D,\iota_C,\iota_D)\) and
  \((D\boxtimes_{\Dubichar}C,\iota'_D,\iota'_C)\) are canonically
  isomorphic.
\end{proposition}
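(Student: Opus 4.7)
The plan is to exhibit an explicit isomorphism via the tensor flip on the ambient algebras used in the Heisenberg-pair construction of Section~\ref{sec:Heisenberg_pair}. By Lemma~\ref{lemm:equiv_cond_V_Heisenberg_pair}, if $(\alpha,\beta)$ is a $\bichar$-Heisenberg pair on a Hilbert space $\Hils$, then $(\beta,\alpha)$ is a $\Dubichar$-Heisenberg pair on the same Hilbert space. Since both $(C\boxtimes_\bichar D,\iota_C,\iota_D)$ and $(D\boxtimes_{\Dubichar}C,\iota'_D,\iota'_C)$ are, by Theorem~\ref{lem:boxtimes_well-defined}, independent of the chosen Heisenberg pair up to equivalence of crossed products, I may build them from these two mutually determined pairs.

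With these choices, the defining embeddings are
\[
\iota_C = \bigl((\Id_C\otimes\alpha)\gamma\bigr)_{13},\qquad
\iota_D = \bigl((\Id_D\otimes\beta)\delta\bigr)_{23},
\]
both landing in $\Mult(C\otimes D\otimes\Comp(\Hils))$, while
\[
\iota'_D = \bigl((\Id_D\otimes\beta)\delta\bigr)_{13},\qquad
\iota'_C = \bigl((\Id_C\otimes\alpha)\gamma\bigr)_{23},
\]
both land in $\Mult(D\otimes C\otimes\Comp(\Hils))$. The key observation is that the tensor flip $\flip\otimes\Id_{\Comp(\Hils)}\colon C\otimes D\otimes\Comp(\Hils)\to D\otimes C\otimes\Comp(\Hils)$ interchanges the first two tensor legs and fixes the third; hence, by direct inspection of leg positions, it carries $\iota_C(c)$ to $\iota'_C(c)$ and $\iota_D(d)$ to $\iota'_D(d)$.

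Having arranged this, I would conclude as follows. The tensor flip extends to an isomorphism of multiplier algebras; restricting it to the subalgebra $\iota_C(C)\cdot\iota_D(D) = C\boxtimes_\bichar D$ yields an isomorphism onto $\iota'_C(C)\cdot\iota'_D(D)$. By Lemma~\ref{lem:boxtimes_Cstar} applied to the $\Dubichar$-Heisenberg pair $(\beta,\alpha)$, the latter coincides with $\iota'_D(D)\cdot\iota'_C(C) = D\boxtimes_{\Dubichar}C$. This isomorphism intertwines $\iota_C$ with $\iota'_C$ and $\iota_D$ with $\iota'_D$ by construction, so it is an equivalence of crossed products.

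There is essentially no obstacle here; the only point that deserves care is to make sure that the Heisenberg pair used for $D\boxtimes_{\Dubichar}C$ is the swapped one $(\beta,\alpha)$ and to invoke Theorem~\ref{lem:boxtimes_well-defined} so that this choice is harmless. Everything else is bookkeeping of tensor leg indices.
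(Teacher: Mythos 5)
Your proof is correct, but it takes a genuinely different route from the paper. The paper realises both crossed products concretely on Hilbert spaces: it chooses faithful covariant representations \((\varphi,U^{\Hils})\) and \((\psi,U^{\Hils[K]})\), invokes Theorem~\ref{lem:boxtimes_well-defined} to identify \(C\boxtimes_\bichar D\) with \(\varphi_1(C)\cdot\tilde\psi_2(D)\subseteq\Bound(\Hils\otimes\Hils[K])\) and \(D\boxtimes_{\Dubichar}C\) with \(\psi_1(D)\cdot\tilde\varphi_2(C)\subseteq\Bound(\Hils[K]\otimes\Hils)\), and then conjugates by the unitary \(\Flip Z^*\), using that the unitary implementing \(\Dubichar\) is \(\hat Z=\Flip Z^*\Flip\). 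You instead stay entirely in the Heisenberg-pair picture: Lemma~\ref{lemm:equiv_cond_V_Heisenberg_pair}(1)\(\iff\)(2) lets you build \(D\boxtimes_{\Dubichar}C\) from the swapped pair \((\beta,\alpha)\), the leg bookkeeping shows that \(\flip\otimes\Id_{\Comp(\Hils)}\) carries \(\iota_C\) to \(\iota'_C\) and \(\iota_D\) to \(\iota'_D\), and Lemma~\ref{lem:boxtimes_Cstar} identifies \(\iota'_C(C)\cdot\iota'_D(D)\) with \(\iota'_D(D)\cdot\iota'_C(C)=D\boxtimes_{\Dubichar}C\). Your argument is the more elementary one — it avoids covariant representations, the unitary \(Z\), and universal quantum groups altogether — and it makes the canonicity transparent at the level of the defining embeddings (note that an equivalence of crossed products is unique when it exists, since it is determined on \(\iota_C(C)\cdot\iota_D(D)\)). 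What the paper's route buys is consistency with its general strategy of reducing statements about \(\boxtimes\) to the Hilbert-space realisation, and an explicit spatial implementation of the isomorphism by the unitary \(\Flip Z^*\). Both proofs lean on Theorem~\ref{lem:boxtimes_well-defined}: the paper to pass to the Hilbert-space model, you to justify that the particular choice \((\beta,\alpha)\) of \(\Dubichar\)-Heisenberg pair is harmless; you correctly flag this as the one point needing care.
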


\begin{proof}
  Let \((U^{\Hils},\varphi)\) and \((U^{\Hils[K]},\psi)\) be faithful
  covariant representations of \(C\) and~\(D\) on Hilbert spaces
  \(\Hils\) and~\(\Hils[K]\), respectively.  
  Theorem~\ref{lem:boxtimes_well-defined} yields
  \begin{align*}
    (C\boxtimes_{\bichar}D,\iota_C,\iota_C)
    &\cong (C\widetilde{\boxtimes}_{\bichar}D, \varphi_1, \tilde{\psi}_2),\\
    (D\boxtimes_{\Dubichar}C,\iota'_D,\iota'_C)
    &\cong (D\widetilde{\boxtimes}_{\Dubichar}C, \psi_1, \tilde{\varphi}_2)
  \end{align*}
  with \(C\widetilde{\boxtimes}_{\bichar}D\subseteq
  \Bound(\Hils\otimes\Hils[K])\),
  \(D\widetilde{\boxtimes}_{\Dubichar}C\subseteq
  \Bound(\Hils[K]\otimes\Hils)\); here \(\psi_1(d) \defeq
  (\psi(d)\otimes 1_{\Hils})\) and \(\tilde{\varphi}_2(c) \defeq
  \hat{Z}(1_{\Hils[K]}\otimes\psi(c))\hat{Z}^{*}\), where~\(Z\)
  satisfies~\eqref{eq:commutation_between_corep_of_two_quantum_groups}
  and \(\hat{Z}= \Flip Z^* \Flip\).  The pair of representations
  \((\varphi_1,\tilde{\psi}_2)\) of~\((C,D)\)
  on~\(\Hils\otimes\Hils[K]\) is unitarily equivalent to the pair of
  representations \((\tilde{\varphi}_2,\psi_1)\)
  on~\(\Hils[K]\otimes\Hils\) via the unitary~\(\Flip Z^*\).
\end{proof}

\subsection{Functoriality for quantum group morphisms}
\label{sec:functoriality_qgr_morphisms}

Let \(\Qgrp{G}{A}\), \(\Qgrp{H}{B}\), \(\Qgrp{G_2}{A_2}\) and
\(\Qgrp{H_2}{B_2}\) be quantum groups.  Let \(f\colon
\G\to\G_2\) and \(g\colon \G[H]\to\G[H]_2\) be quantum group
morphisms in the sense of the several equivalent descriptions
in Theorem~\ref{the:equivalent_notion_of_homomorphisms}.

Let \(\bichar_2\in\U(\hat{A}_2\otimes\hat{B}_2)\) be a
bicharacter.  We may view~\(\bichar_2\) as a quantum group
morphism \(\bichar_2'\colon \G_2\to\DuG[H]_2\).  Composing this
with the given quantum group morphisms \(f\colon \G\to\G_2\)
and the dual \(\hat{g}\colon \DuG[H]_2\to\DuG[H]\), we get a
quantum group morphism \(\bichar'\defeq \hat{g}\circ
\bichar_2'\circ f\colon \G\to\DuG[H]\), which we view as a
bicharacter \(\bichar\in\U(\hat{A}\otimes\hat{B})\).

Let \((C,\gamma)\) and~\((D,\delta)\) be a \(\G\)\nb-\(\Cst\)-algebra
and an \(\G[H]\)\nb-\(\Cst\)-algebra, respectively.  The description
of~\(f\) in Theorem~\ref{the:equivalent_notion_of_homomorphisms}(4) is
as a functor between the categories of \(\G\)- and
\(\G_2\)\nb-\(\Cst\)-algebras that does not change the underlying
\(\Cst\)\nb-algebras.  In particular, this functor maps~\(\gamma\) to
a continuous \(\G_2\)\nb-coaction \(\gamma_2\colon C\to C\otimes A_2\)
on~\(C\).  Similarly, \(g\) maps~\(\delta\) to a continuous
\(\G[H]_2\)\nb-coaction \(\delta_2\colon D\to D\otimes B_2\) on~\(D\).

\begin{theorem}
  \label{the:functoriality_qgr_morphism}
  In the situation above, the crossed products \((C,\gamma_2)
  \boxtimes_{\bichar_2} (D,\gamma_2)\) and \((C,\gamma)
  \boxtimes_{\bichar} (D,\gamma)\) of \(C\) and~\(D\) are
  equivalent.
\end{theorem}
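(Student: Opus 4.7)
The strategy is to realise both crossed products as \(\Cst\)\nb-subalgebras of the same \(\Bound(\Hils\otimes\Hils[K])\) via Theorem~\ref{the:crossed_prod_for_cov_reps_in_Hilb_sp_lavel}, and argue they coincide.

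First, fix faithful covariant representations \((\varphi, U^\Hils)\) of \((C,\gamma,A)\) and \((\psi, U^{\Hils[K]})\) of \((D,\delta,B)\), which exist by Example~\ref{exa:existence_of_faithful_covariant_representation}. In the formalism of Section~\ref{sec:univ_qgr}, these correspond to representations \(\rho_1\colon\hat{A}^\univ\to\Bound(\Hils)\) and \(\rho_2\colon\hat{B}^\univ\to\Bound(\Hils[K])\). The dual morphisms \(\hat f\colon\DuG_2\to\DuG\) and \(\hat g\colon\DuG[H]_2\to\DuG[H]\) correspond, via Theorem~\ref{the:equivalent_notion_of_homomorphisms}.5, to Hopf \Star{}homomorphisms \((\hat f)^\univ\colon \hat A_2^\univ\to\hat A^\univ\) and \((\hat g)^\univ\colon \hat B_2^\univ\to\hat B^\univ\). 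Setting \(\rho_1^{(2)}\defeq\rho_1\circ(\hat f)^\univ\) and \(\rho_2^{(2)}\defeq\rho_2\circ(\hat g)^\univ\) yields representations of \(\hat A_2^\univ\) and \(\hat B_2^\univ\), and hence corepresentations \(U^\Hils_2\) and \(U^{\Hils[K]}_2\) of \(\G_2\) and \(\G[H]_2\) on the same Hilbert spaces. By the compatibility in Theorem~\ref{the:equivalent_notion_of_homomorphisms}.4 between the functor on coactions and the induced transformation of corepresentations, \((\varphi, U^\Hils_2)\) and \((\psi, U^{\Hils[K]}_2)\) are faithful covariant representations of \((C,\gamma_2,A_2)\) and \((D,\delta_2,B_2)\); the covariance condition reduces to the defining diagram for \(\gamma_2\) and is a routine check.

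Applying Theorem~\ref{the:crossed_prod_for_cov_reps_in_Hilb_sp_lavel} to both sides, each crossed product is faithfully represented on \(\Hils\otimes\Hils[K]\) with image of the form \(\varphi_1(C)\cdot\tilde\psi_2(D)\), where \(\tilde\psi_2=\Ad_Z\circ\psi_2\) uses the unitaries \(Z\) (for \((\bichar, U^\Hils, U^{\Hils[K]})\)) and \(Z_2\) (for \((\bichar_2, U^\Hils_2, U^{\Hils[K]}_2)\)) supplied by Theorem~\ref{the:V-Heis_Comm_corep}. The problem reduces to proving \(Z = Z_2\).

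This equality is the main technical point and the main obstacle. From~\eqref{eq:Z_via_universal_lift} we have \(Z=(\rho_1\otimes\rho_2)(\bichar^\univ)^*\) and \(Z_2=(\rho_1^{(2)}\otimes\rho_2^{(2)})(\bichar_2^\univ)^*\). Using the factorisations \(\rho_i^{(2)}=\rho_i\circ(\hat f)^\univ\) and \(\rho_i\circ(\hat g)^\univ\) from Step~1, one gets \(Z_2=(\rho_1\otimes\rho_2)\bigl(((\hat f)^\univ\otimes(\hat g)^\univ)(\bichar_2^\univ)\bigr)^*\), so it suffices to establish the universal-level compatibility
\[
((\hat f)^\univ\otimes(\hat g)^\univ)(\bichar_2^\univ)=\bichar^\univ.
\]
This follows from the definition \(\bichar' = \hat g\circ\bichar_2'\circ f\): at the level of Hopf \Star{}homomorphisms of universal quantum groups, the map \(A^\univ\to\hat B^\univ\) associated to \(\bichar\) is the composition of the ones associated to \(f\), \(\bichar_2\), and \(\hat g\) (Theorem~\ref{the:equivalent_notion_of_homomorphisms}.5), and unwinding the universal-bicharacter construction of Theorem~\ref{the:equivalent_notion_of_homomorphisms}.6 turns this composition into the displayed identity. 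Substituting gives \(Z=Z_2\), hence both crossed products yield the same \(\Cst\)\nb-subalgebra of \(\Bound(\Hils\otimes\Hils[K])\); Theorem~\ref{lem:boxtimes_well-defined} then supplies the equivalence. The whole difficulty lies in this last step: careful bookkeeping of the directions of Hopf \Star{}maps between universal duals, and the precise way universal bicharacters transform under composition of quantum group morphisms.
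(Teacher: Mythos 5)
Your proposal is correct and follows essentially the same route as the paper: fix faithful covariant representations, transport the corepresentations along the Hopf \Star{}homomorphisms $\hat{A}_2^\univ\to\hat{A}^\univ$ and $\hat{B}_2^\univ\to\hat{B}^\univ$, reduce to the identity $\bichar^\univ=(\hat{f}\otimes\hat{g})(\bichar_2^\univ)$ so that the two unitaries $Z$ coincide via~\eqref{eq:Z_via_universal_lift}, and conclude from Theorem~\ref{the:crossed_prod_for_cov_reps_in_Hilb_sp_lavel}. The only difference is one of emphasis: the step you flag as the main obstacle is simply asserted in the paper as part of how the bijection between bicharacters and quantum group morphisms is set up in~\cite{Meyer-Roy-Woronowicz:Homomorphisms}.
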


\begin{proof}
  Let \((\varphi,U^{\Hils})\) be a \(\G\)\nb-covariant
  representation of \((C,\gamma)\) on~\(\Hils\) and let
  \((\psi,U^{\Hils[K]})\) be an \(\G[H]\)\nb-covariant
  representation of \((D,\delta)\) on~\(\Hils[K]\).

  The quantum group morphism~\(f\) turns~\(U^{\Hils}\) into a
  corepresentation~\(U_2^{\Hils}\) of~\(\G_2\) on~\(\Hils\).  This is
  asserted in \cite{Meyer-Roy-Woronowicz:Homomorphisms}*{Proposition
    6.5}.  Since the quick proof given
  in~\cite{Meyer-Roy-Woronowicz:Homomorphisms} only works for
  corepresentations that induce a continuous coaction on
  \(\Comp(\Hils)\), which is not automatic, we give a different proof
  here using universal quantum groups.

  We may view the quantum group morphism~\(f\) as a Hopf \Star{}homomorphism
  \(\hat{f}\colon \hat{A}_2^\univ\to\hat{A}^\univ\) between the
  duals of the associated universal \(\Cst\)\nb-algebras by
  Theorem~\ref{the:equivalent_notion_of_homomorphisms}.  By
  the universal property, \(U^{\Hils}\) is equivalent to a
  representation of~\(\hat{A}^\univ\) on~\(\Hils\).  Composing
  this with~\(\hat{f}\) gives a representation
  of~\(\hat{A}_2^\univ\), which is equivalent to the desired
  corepresentation~\(U_2^{\Hils}\) of~\(\G_2\) on~\(\Hils\).

  This operation on the level of corepresentations is
  compatible with the map \(\gamma\mapsto\gamma_2\) on
  coactions in the sense that \((\varphi,U_2^{\Hils})\) is a
  \(\G_2\)\nb-covariant representation of \((C,\gamma_2)\).

  Similarly, \(g\) turns~\(U^{\Hils[K]}\) into a
  corepresentation~\(U_2^{\Hils[K]}\) of~\(\G[H]_2\)
  on~\(\Hils[K]\), and \((\psi,U_2^{\Hils[K]})\) is a covariant
  representation of \((D,\delta_2)\).

  The bicharacters \(\bichar\in \U(\hat{A} \otimes
  \hat{B})\) and \(\bichar_2\in \U(\hat{A}_2 \otimes
  \hat{B}_2)\) lift uniquely to bicharacters
  \(\bichar^\univ\in \U(\hat{A}^\univ \otimes
  \hat{B}^\univ)\) and \(\bichar_2^\univ\in
  \U(\hat{A}_2^\univ \otimes \hat{B}_2^\univ)\) by
  Theorem~\ref{the:equivalent_notion_of_homomorphisms}.
  The bijection between bicharacters and quantum group
  morphisms is defined in such a way that \(\bichar^\univ =
  (\hat{f}\otimes\hat{g}) (\bichar_2^\univ)\).
  Equation~\ref{eq:Z_via_universal_lift} then shows that the
  unitaries~\(Z\) on \(\Hils\otimes\Hils[K]\) that are used to
  construct the twisted tensor products with respect to
  \(\bichar\) and~\(\bichar_2\) are the same.

  Now
  Theorem~\ref{the:crossed_prod_for_cov_reps_in_Hilb_sp_lavel}
  yields the desired equivalence of crossed products because
  both are faithfully represented by the same
  \(\Cst\)\nb-algebra \(\varphi(C)\cdot Z\psi(D)Z^*\) on
  \(\Hils\otimes\Hils[K]\).
\end{proof}

The following special cases of
Theorem~\ref{the:functoriality_qgr_morphism} are particularly
noteworthy.

\begin{example}
  \label{exa:reduce_to_B}
  Let \(\G_2=\DuG[H]\), \(\G[H]_2=\G[H]\), let \(g=\Id\colon
  \G[H]\to\G[H]_2\) and let \(f\colon \G\to\G_2=\DuG[H]\) be the
  bicharacter~\(\bichar\) itself.  Let \(\bichar_2=\Dumultunit[B]\) be
  the reduced bicharacter of~\(\DuG[H]\).  Then
  \[
  (C,\gamma) \boxtimes_\bichar (D,\delta)
  \cong (C,\gamma_2) \boxtimes_{\Dumultunit[B]} (D,\delta),
  \]
  where \(\gamma_2\colon C\to C\otimes\hat{B}\) is the
  \(\DuG[H]\)\nb-coaction associated to~\(\gamma\) by the
  quantum group morphism~\(f\) corresponding to~\(\bichar\).

  This is a special case of
  Theorem~\ref{the:functoriality_qgr_morphism} because the
  bicharacter~\(\Dumultunit[B]\) describes the identity
  morphism on the quantum group~\(\DuG[H]\).  The composition
  of this with~\(f\) gives again~\(f\), so that the
  bicharacter~\(\bichar\) that we get from
  \(\bichar_2=\Dumultunit[B]\) by the above construction is
  indeed the given on.
\end{example}

\begin{example}
  \label{exa:reduce_to_A}
  Let \(\G_2=\G\), \(\G[H]_2=\DuG\), let \(f=\Id\colon
  \G_2\to\G\) and let \(g\colon \G[H]\to\G[H]_2=\DuG\) be the
  dual of the morphism \(\G\to\DuG[H]_2\) associated to the
  bicharacter~\(\bichar\).  Let \(\bichar_2=\multunit[A]\) be
  the reduced bicharacter of~\(\DuG[H]\).  Then
  \[
  (C,\gamma) \boxtimes_\bichar (D,\delta)
  \cong (C,\gamma) \boxtimes_{\multunit[A]} (D,\delta_2),
  \]
  where \(\delta_2\colon D\to D\otimes \hat{A}\) is the
  \(\DuG\)\nb-coaction associated to~\(\delta\) by the quantum
  group morphism~\(g\).
\end{example}

The last example reduces the twisted tensor
product~\(\boxtimes_\bichar\) for an arbitrary bicharacter to
the special case \(\G[H]=\DuG\) and \(\bichar=\multunit[A]\).

\subsection{Functoriality for various kinds of maps}
\label{sec:functoriality}

\(\Cst\)\nb-algebras may be turned into a category using
several types of maps:
\begin{itemize}
\item morphisms (nondegenerate \Star{}homomorphisms \(C_1\to
  \Mult(C_2)\));
\item proper morphisms (nondegenerate \Star{}homomorphisms \(C_1\to
  C_2\));
\item possibly degenerate \Star{}homomorphisms \(C_1\to C_2\);
\item completely positive maps \(C_1\to C_2\);
\item completely positive contractions \(C_1\to C_2\);
\item completely contractive maps \(C_1\to C_2\);
\item completely bounded maps \(C_1\to C_2\).
\end{itemize}
It is well known that the minimal tensor product is functorial
for such maps; that is, two ``maps'' \(f\colon C_1\to C_2\) and
\(g\colon D_1\to D_2\) induce a ``map'' \(f\otimes g\colon
C_1\otimes D_1\to C_2\otimes D_2\), which is determined by
\((f\otimes g)(c\otimes d)\defeq f(c)\otimes g(d)\).  We claim
that the tensor product~\(\boxtimes_\bichar\) is also
functorial for all these kinds of ``maps'' in the following
sense:

\begin{lemma}
  \label{lem:boxtimes_functorial}
  Let \(f\colon (C_1,\gamma_1)\to (C_2,\gamma_2)\) and \(g\colon
  (D_1,\delta_1)\to (D_2,\delta_2)\) be a ``maps'' that are \(\G\)-
  and \(\G[H]\)\nb-equivariant, respectively.  Then there is a unique
  ``map''
  \[
  f\boxtimes_\bichar g\colon C_1\boxtimes_\bichar D_1\to
  C_2\boxtimes_\bichar D_2,\qquad
  \iota_{C_1}(c)\cdot\iota_{D_1}(d)\mapsto
  \iota_{C_2}(f(c))\cdot\iota_{D_2}(g(d)),
  \]
  and \((f,g)\mapsto f\boxtimes_\bichar g\) is a bifunctor.
\end{lemma}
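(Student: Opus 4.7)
The plan is to fix a single $\bichar$-Heisenberg pair $(\alpha,\beta)$ on a Hilbert space $\Hils$ and realize both crossed products concretely as
\[
C_i\boxtimes_\bichar D_i = \iota_{C_i}(C_i)\cdot \iota_{D_i}(D_i)
\subseteq \Mult(C_i\otimes D_i\otimes \Comp(\Hils)),
\qquad i=1,2,
\]
using the same $(\alpha,\beta)$; this is legal because $C\boxtimes_\bichar D$ is independent of the Heisenberg pair up to equivalence by Theorem~\ref{lem:boxtimes_well-defined}. Then I would obtain $f\boxtimes_\bichar g$ as (essentially) the restriction of the map $\Phi\defeq f\otimes g\otimes \Id_{\Comp(\Hils)}$, exploiting that the minimal tensor product is known to be functorial for each class of ``maps'' in the list.

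The core calculation uses only the equivariance of $f$ and $g$. For $c\in C_1$, the identity $(f\otimes\Id_A)\circ\gamma_1 = \gamma_2\circ f$ gives
\[
\Phi(\iota_{C_1}(c))
= \bigl((f\otimes\alpha)(\gamma_1(c))\bigr)_{13}
= \bigl((\Id_{C_2}\otimes\alpha)(\gamma_2(f(c)))\bigr)_{13}
= \iota_{C_2}(f(c)),
\]
and symmetrically $\Phi(\iota_{D_1}(d))=\iota_{D_2}(g(d))$. For nondegenerate morphisms $f,g$, this is immediate: $\Phi$ extends to a morphism of multiplier algebras, it is multiplicative, and so $\Phi(\iota_{C_1}(c)\iota_{D_1}(d))=\iota_{C_2}(f(c))\iota_{D_2}(g(d))$, producing the desired morphism $f\boxtimes_\bichar g$ with nondegeneracy coming from $f(C_1)\cdot C_2 = C_2$ and $g(D_1)\cdot D_2=D_2$.

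For the remaining classes of maps (possibly degenerate \Star{}homomorphisms, completely positive/contractive/bounded maps), $\Phi$ lives only on $C_1\otimes D_1\otimes\Comp(\Hils)$ and is typically not multiplicative, so the extension to elements of $\Mult(C_1\otimes D_1\otimes\Comp(\Hils))$ that lie in $C_1\boxtimes_\bichar D_1$ is the delicate point. I would handle this by invoking Lemma~\ref{lem:boxtimes_Podles}, which yields
\[
(C_i\boxtimes_\bichar D_i)\cdot \Comp(\Hils)_3
= C_i\otimes D_i\otimes \Comp(\Hils).
\]
Thus each $x\in C_i\boxtimes_\bichar D_i$ is determined by the family of products $x\cdot(1\otimes 1\otimes k)$ for $k\in\Comp(\Hils)$, all of which lie in the honest tensor product. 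On generators the calculation above gives
\[
\Phi\bigl(\iota_{C_1}(c)\iota_{D_1}(d)\cdot(1\otimes 1\otimes k)\bigr)
= \iota_{C_2}(f(c))\iota_{D_2}(g(d))\cdot(1\otimes 1\otimes k),
\]
and the right hand side lies in $C_2\otimes D_2\otimes\Comp(\Hils) = (C_2\boxtimes_\bichar D_2)\cdot\Comp(\Hils)_3$. This, together with the continuity and norm estimates coming from $\Phi$ being a map of the prescribed type, lets me define $f\boxtimes_\bichar g$ on $C_1\boxtimes_\bichar D_1$ by the formula in the statement and show the image sits in $C_2\boxtimes_\bichar D_2$; the map inherits from $\Phi$ whatever type-of-map property is being tracked.

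Finally, bifunctoriality, $(f_1\circ f_2)\boxtimes_\bichar (g_1\circ g_2) = (f_1\boxtimes_\bichar g_1)\circ(f_2\boxtimes_\bichar g_2)$ and $\Id\boxtimes_\bichar\Id=\Id$, is immediate from the generator formula and from bifunctoriality of the minimal tensor product. The main obstacle is precisely the non-multiplicative step just described: identifying $C_i\boxtimes_\bichar D_i$ inside the multiplier algebra tightly enough that $\Phi$, defined only on the underlying $\Cst$-tensor product, pulls back to a well-defined map on the crossed product. Lemma~\ref{lem:boxtimes_Podles} is the crucial structural input that makes this pull-back work uniformly for all the classes of maps considered.
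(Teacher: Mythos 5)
Your overall architecture is the same as the paper's: fix one Heisenberg pair, use functoriality of the minimal tensor product to get $\Phi=f\otimes g\otimes\Id_{\Comp(\Hils)}$, use Lemma~\ref{lem:boxtimes_Podles} to see that $C_i\boxtimes_\bichar D_i$ sits inside $\tilde\Mult_{\Comp(\Hils)}(C_i\otimes D_i\otimes\Comp(\Hils))$ so that elements are determined by their products with $\Comp(\Hils)_3$, treat morphisms by multiplicativity of the multiplier extension, and treat the remaining classes by extending the $\Comp(\Hils)$-bilinear map $\Phi$ to $\tilde\Mult_{\Comp(\Hils)}$. Uniqueness and bifunctoriality are handled the same way in both arguments.

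There is, however, one genuine gap at exactly the step you flag as delicate. Your displayed identity
\[
\Phi\bigl(\iota_{C_1}(c)\,\iota_{D_1}(d)\cdot(1\otimes 1\otimes k)\bigr)
= \iota_{C_2}(f(c))\,\iota_{D_2}(g(d))\cdot(1\otimes 1\otimes k)
\]
does not follow from ``the calculation above.'' That calculation was $\Phi(\iota_{C_1}(c))=\iota_{C_2}(f(c))$, which for a completely positive or completely bounded $f$ is not even literally meaningful ($\iota_{C_1}(c)$ is a multiplier, and $\Phi$ is defined only on $C_1\otimes D_1\otimes\Comp(\Hils)$), and in any case it says nothing about how $\Phi$ behaves on the \emph{product} $\iota_{C_1}(c)\iota_{D_1}(d)$, since $\Phi$ is not multiplicative for these classes of maps. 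What is actually needed, and what the paper proves, is a partial multiplicativity statement: starting from
\[
(f\otimes g\otimes\Id)(c\otimes 1\otimes x)\cdot(f\otimes g\otimes\Id)(1\otimes d\otimes y)
= f(c)\otimes g(d)\otimes xy
= (f\otimes g\otimes\Id)\bigl((c\otimes 1\otimes x)(1\otimes d\otimes y)\bigr)
\]
on elementary tensors, one extends by linearity and continuity to conclude $(f\otimes g\otimes\Id)(x\cdot y)=(f\otimes\Id)(x)_{13}\,(g\otimes\Id)(y)_{23}$ for $x\in\tilde\Mult(C_1\otimes\Comp(\Hils))$ placed in legs $1,3$ and $y\in\tilde\Mult(D_1\otimes\Comp(\Hils))$ placed in legs $2,3$; the point is that $f$ and $g$ act on disjoint legs. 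Applied to $x=(\Id\otimes\alpha)\gamma_1(c)$ and $y=(\Id\otimes\beta)\delta_1(d)$, together with equivariance of $f$ and $g$, this yields your displayed identity. Without this computation the definition of $f\boxtimes_\bichar g$ on the crossed product is not justified for the non-morphism cases; with it, your proof is complete and coincides with the paper's.
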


The notion of equivariance for possibly degenerate
\Star{}homomorphisms or completely bounded maps is defined as
in \cite{Baaj-Skandalis:Hopf_KK}*{Definition 1.8}.  The
multiplier algebra is not functorial for such maps, but the
comultiplication morphism \(\gamma_1\colon C_1\to
\Mult(C_1\otimes A)\) takes values in the smaller algebra
\[
\tilde\Mult(C_1\otimes A) =
\tilde\Mult_A(C_1\otimes A) \defeq
\{x\in \Mult(C_1\otimes A) \mid x\cdot (1_C\otimes A)\cup
(1_C\otimes A)\cdot x\subseteq C_1\otimes A\}.
\]
We write a subscript on~\(\tilde\Mult\) to avoid ambiguities:
\(\tilde\Mult(C\otimes D\otimes A)\) could mean either
\(\tilde\Mult_A(C\otimes D\otimes A)\) or \(\tilde\Mult_{D\otimes
  A}(C\otimes D\otimes A)\).

A completely bounded map \(f\colon C_1\to C_2\) induces a
completely bounded, \(\Mult(D)\)-bilinear map
\[
f\otimes\Id_D\colon C_1\otimes D\to C_2\otimes D.
\]
The map \(f\otimes\Id_D\) is completely positive or completely
contractive if~\(f\) is, and a \Star{}homomorphism if~\(f\) is.  Any
\(D\)\nb-bilinear ``map'' \(h\colon C_1\otimes D\to C_2\otimes D\)
extends uniquely to a \(\Mult(D)\)-bilinear ``map'' \(\bar{h}\colon
\tilde\Mult(C_1\otimes D)\to \tilde\Mult(C_2\otimes D)\): for
\(x\in\tilde\Mult(C_1\otimes D)\), there is a unique
\(\bar{h}(x)\in\tilde\Mult(C_2\otimes D)\) with \(\bar{h}(x)\cdot
(1\otimes d) = h(x\cdot (1\otimes d))\) and \((1\otimes d)\cdot
\bar{h}(x) = h((1\otimes d)\cdot x)\) for all \(d\in D\)
because~\(h\) is \(D\)\nb-linear.  On \(\Mult(C_1\otimes D)\), this
definition only works if~\(h\) is \(C_1\otimes D\)-linear, which is
a serious restriction.  After having constructed the extension, we
write \(f\otimes\Id_D\) for the unique extension of
\(f\otimes\Id_D\) to \(\tilde\Mult\) in order not to change our
formulas.  We use this extension to make sense of the equivariance
condition \(\delta\circ f = (f\otimes\Id_A)\circ\gamma\) for
``maps.''

\begin{proof}[Proof of Lemma~\textup{\ref{lem:boxtimes_functorial}}]
  The uniqueness and hence the functoriality is clear because
  the linear span of \(\iota_{C_1}(c)\cdot\iota_{D_1}(d)\) with
  \(c\in C_1\), \(d\in D_1\) is dense in \(C_1\boxtimes_\bichar
  D_1\) and all types of ``maps'' we consider are bounded
  linear.

  We remarked above that ordinary minimal \(\Cst\)\nb-tensor
  products are functorial for ``maps,'' that is, there is a
  well-defined \(\Bound(\Hils)\)-linear ``map''
  \[
  f\otimes g\otimes \Id_{\Comp(\Hils)}\colon
  C_1\otimes D_1\otimes\Comp(\Hils)
  \to C_2\otimes D_2\otimes\Comp(\Hils).
  \]
  We may extend it to a ``map''
  \[
  (f,g)_*\colon \tilde\Mult_{\Comp(\Hils)}(C_1\otimes D_1\otimes\Comp(\Hils)) \to
  \tilde\Mult_{\Comp(\Hils)}(C_2\otimes D_2\otimes\Comp(\Hils)),
  \]
  Lemma~\ref{lem:boxtimes_Podles} implies
  \(C_i\boxtimes_\bichar D_i\subseteq
  \tilde\Mult_{\Comp(\Hils)}(C_i\otimes
  D_i\otimes\Comp(\Hils))\) for \(i=1,2\).

  We claim that the ``map'' \((f,g)_*\) sends
  \(\iota_{C_1}(c)\cdot\iota_{D_1}(d)\) to
  \(\iota_{C_2}(f(c))\cdot\iota_{D_2}(g(d))\); hence it
  restricts to a ``map'' \(f\boxtimes_\bichar g\colon
  C_1\boxtimes_\bichar D_1\to C_2\boxtimes_\bichar D_2\) with
  the required property.  To prove this claim, we look at two cases
  separately.

  First let
  \(f\) and~\(g\) be equivariant morphisms; then \(f\otimes
  g\otimes\Id_{\Comp(\Hils)}\) is a morphism, hence it extends to
  a \Star{}homomorphism between multiplier algebras.  Since
  \(f\) and~\(g\) are equivariant, this canonical extension
  maps \(\gamma_1(c)_{1\pi}\mapsto \gamma_2(c)_{1\pi}\) and
  \(\delta_1(c)_{2\pi}\mapsto \delta_2(c)_{2\pi}\).  Hence it
  maps \(\iota_{C_1}(c)\iota_{D_1}(d)\) to
  \(\iota_{C_2}(f(c))\cdot\iota_{D_2}(g(d))\) as needed.  If
  \(f\) and~\(g\) are proper morphisms, then
  \(\iota_{C_2}(f(c))\cdot\iota_{D_2}(g(d))\in
  C_2\boxtimes_\bichar D_2\) for all \(c\in C_1\), \(d\in
  D_1\), so that \(f\boxtimes_\bichar g\) is a proper morphism
  as well.

  Now let \(f\) and~\(g\) be completely bounded maps; this
  contains the remaining types as special cases.  By definition,
  \begin{align*}
    (f\otimes g\otimes\Id)(c\otimes 1_D\otimes x)\cdot
    (f\otimes g\otimes\Id)(1_C\otimes d\otimes y)
    &= f(c)\otimes g(d) \otimes x\cdot y
    \\&= (f\otimes g\otimes\Id)(c\otimes d\otimes x\cdot y)
  \end{align*}
  for all \(c\in C_1\), \(d\in D_1\), \(x,y\in\Comp(\Hils)\).
  Since \(f\otimes g\otimes\Id\) is bounded linear, this
  implies the partial multiplicativity \((f\otimes
  g\otimes\Id)(x\cdot y) =(f\otimes\Id)(x)_{13}
  (g\otimes\Id)(y)_{23}\) if \(x\in \tilde\Mult(C_1\otimes
  \Comp(\Hils))\), \(y\in \tilde\Mult(C_2\otimes
  \Comp(\Hils))\).  In particular,
  \[
  (f\otimes g\otimes\Id_{\Comp(\Hils)})
  (\iota_{C_1}(c)\cdot \iota_{D_1}(d))
  = (f\otimes g\otimes \Id_{\Comp(\Hils)})(\iota_{C_1}(c))
  \cdot (f\otimes g\otimes\Id_{\Comp(\Hils)})(\iota_{D_1}(d))
  \]
  for all \(c\in C_1\), \(d\in D_1\).  Finally, the
  equivariance of \(f\) and~\(g\) shows that the right hand
  side is \(\iota_{C_2}(f(c))\cdot \iota_{D_2}(g(d))\).
\end{proof}

\begin{proposition}
  \label{pro:functoriality_injective_surjective}
  If \(f\) and~\(g\) are injective morphisms or
  \Star{}homomorphisms, then so is \(f\boxtimes_\bichar g\),
  and vice versa.

  If \(f\) and~\(g\) are surjective \Star{}homomorphisms, then
  so is \(f\boxtimes_\bichar g\), and vice versa.

  Hence \(f\boxtimes_\bichar g\) is bijective if and only if both
  \(f\) and~\(g\) are bijective.
\end{proposition}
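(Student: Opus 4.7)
The plan is to prove the four implications separately using two main tools: the faithful Hilbert space realization of Theorem~\ref{the:crossed_prod_for_cov_reps_in_Hilb_sp_lavel} and the identity
\[
(C\boxtimes_{\bichar}D)\cdot\Comp(\Hils)_3 = C\otimes D\otimes\Comp(\Hils)
\]
from Lemma~\ref{lem:boxtimes_Podles}, together with the observation made in the proof of Lemma~\ref{lem:boxtimes_functorial} that \(f\boxtimes_{\bichar}g\) is the restriction to \(C_1\boxtimes_{\bichar}D_1\) of the multiplier extension of \(f\otimes g\otimes\Id_{\Comp(\Hils)}\).

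For injectivity in the forward direction, I will pick faithful covariant representations \((\varphi_2,U^{\Hils})\) of \((C_2,\gamma_2)\) and \((\psi_2,U^{\Hils[K]})\) of \((D_2,\delta_2)\) and pull them back along \(f\) and \(g\): equivariance makes \((\varphi_2\circ f,U^{\Hils})\) and \((\psi_2\circ g,U^{\Hils[K]})\) covariant representations of \((C_1,\gamma_1)\) and \((D_1,\delta_1)\), and they are faithful precisely when \(f\) and \(g\) are injective. Applying Theorem~\ref{the:crossed_prod_for_cov_reps_in_Hilb_sp_lavel} to both pairs then yields faithful representations \(\rho_1,\rho_2\colon C_i\boxtimes_{\bichar}D_i\to\Bound(\Hils\otimes\Hils[K])\) built with the same unitary \(Z\) from~\eqref{eq:commutation_between_corep_of_two_quantum_groups}, and a check on generators gives \(\rho_1=\rho_2\circ(f\boxtimes_{\bichar}g)\); faithfulness of \(\rho_1\) then makes \(f\boxtimes_{\bichar}g\) injective. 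For the converse, given \(c\in C_1\) with \(f(c)=0\), nondegeneracy of \(\iota_{D_1}\) produces some \(d\in D_1\) with \(\iota_{C_1}(c)\cdot\iota_{D_1}(d)\neq 0\) in \(C_1\boxtimes_{\bichar}D_1\) (using that \(\iota_{C_1}\) is injective, as visible from any faithful model); but \((f\boxtimes_{\bichar}g)(\iota_{C_1}(c)\iota_{D_1}(d))=\iota_{C_2}(f(c))\iota_{D_2}(g(d))=0\), so injectivity of \(f\boxtimes_{\bichar}g\) forces \(f\) injective, and symmetrically \(g\) injective.

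For surjectivity in the forward direction, the image of \(f\boxtimes_{\bichar}g\) is a \(\Cst\)\nb-subalgebra of \(C_2\boxtimes_{\bichar}D_2\) (since \Star{}homomorphisms of \(\Cst\)\nb-algebras have closed range) and contains every product \(\iota_{C_2}(f(c))\iota_{D_2}(g(d))=\iota_{C_2}(c')\iota_{D_2}(d')\) when \(f,g\) are surjective; the closed linear span of these is \(C_2\boxtimes_{\bichar}D_2\) by definition.

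The main obstacle is the converse of surjectivity. I plan to use the multiplier-extension identity
\[
(f\boxtimes_{\bichar}g)(x)\cdot(1\otimes 1\otimes k) = (f\otimes g\otimes\Id_{\Comp(\Hils)})\bigl(x\cdot(1\otimes 1\otimes k)\bigr)
\]
for \(x\in C_1\boxtimes_{\bichar}D_1\), \(k\in\Comp(\Hils)\). Combining this with Lemma~\ref{lem:boxtimes_Podles} applied to both sides of the assumed equality \((f\boxtimes_{\bichar}g)(C_1\boxtimes_{\bichar}D_1)=C_2\boxtimes_{\bichar}D_2\), I get
\[
f(C_1)\otimes g(D_1)\otimes\Comp(\Hils) = C_2\otimes D_2\otimes\Comp(\Hils).
\]
Slicing by any nonzero linear functional on \(\Comp(\Hils)\) yields \(f(C_1)\otimes g(D_1)=C_2\otimes D_2\) inside the minimal tensor product, and a standard slice argument applying functionals on \(D_2\) and on \(C_2\) separately forces \(f(C_1)=C_2\) and \(g(D_1)=D_2\). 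The bijectivity statement is then immediate from the two parts. The subtle point is the bookkeeping between \(f\boxtimes_{\bichar}g\) and \(f\otimes g\otimes\Id_{\Comp(\Hils)}\) via the multiplier extension; once this is in place, the Podle\'s identity converts the twisted surjectivity question into a classical one about minimal tensor products.
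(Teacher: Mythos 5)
Your proposal is correct, and three of the four implications (the converse for injectivity, and both directions for surjectivity) follow essentially the same route as the paper: the converse for injectivity via vanishing of $\iota_{C_2}(f(c))\iota_{D_2}(g(d))$ together with nondegeneracy of $\iota_{D_1}$ and injectivity of $\iota_{C_1}$, and the converse for surjectivity via Lemma~\ref{lem:boxtimes_Podles} and slice maps (the paper phrases this as an inclusion $\iota_{C_2}(f(C_1))\iota_{D_2}(g(D_1))\cdot\Comp(\Hils)_3\subseteq f(C_1)\otimes g(D_1)\otimes\Comp(\Hils)$ rather than your equality, but the slicing step is identical). Where you genuinely diverge is the forward direction for injectivity. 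The paper argues in one line that $f\otimes g\otimes\Id_{\Comp(\Hils)}$ is injective, hence so is its extension to $\tilde\Mult_{\Comp(\Hils)}(C_1\otimes D_1\otimes\Comp(\Hils))$, hence so is its restriction $f\boxtimes_\bichar g$. You instead pull back faithful covariant representations of $(C_2,\gamma_2)$ and $(D_2,\delta_2)$ along $f$ and $g$ and invoke Theorem~\ref{the:crossed_prod_for_cov_reps_in_Hilb_sp_lavel} twice with the same unitary $Z$; this works (the unitary in Theorem~\ref{the:V-Heis_Comm_corep} depends only on the corepresentations, which you do not change) and has the merit of making the intertwining $\rho_1=\rho_2\circ(f\boxtimes_\bichar g)$ explicit, but it is longer and buys nothing the multiplier-extension argument does not already give. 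One small caveat: your covariant-representation route requires $\varphi_2\circ f$ and $\psi_2\circ g$ to be \emph{representations}, i.e.\ nondegenerate, so as written it covers injective morphisms but not possibly degenerate injective \Star{}homomorphisms, which the proposition also claims; for those you should fall back on the paper's observation that the canonical extension of an injective $D$\nb-bilinear map to $\tilde\Mult$ remains injective.
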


\begin{proof}
  If \(f\) and~\(g\) are injective, so is \(f\otimes
  g\otimes\Id_{\Comp(\Hils)}\); hence its extension to
  multipliers is injective, and so is the restriction to
  \(C_1\boxtimes_\bichar D_1\).  Conversely,
  \((f\boxtimes_\bichar g)(\iota_{C_1}(c)\iota_{D_1}(d))\)
  vanishes if \(f(c)=0\) or \(g(d)=0\); hence \(f\) and~\(g\)
  are injective if \(f\boxtimes_\bichar g\) is.

  If \(f\) and~\(g\) are surjective, then elements of the form
  \((f\boxtimes_\bichar g)(\iota_{C_1}(c)\iota_{D_1}(d))=
  \iota_{C_2}(f(c))\iota_{D_2}(g(d))\) are linearly dense in
  \(C_2\boxtimes_\bichar D_2\).  Hence \(f\boxtimes_\bichar g\)
  is surjective as well.  Conversely, suppose that
  \(f\boxtimes_\bichar g\) is surjective.  Then
  \[
  \iota_{C_2}(f(C_1))\cdot \iota_{D_2}(g(D_1))\cdot \Comp(\Hils)_3
  = (C_2\boxtimes_\bichar D_2)\cdot \Comp(\Hils)_3
  = C_2\otimes D_2 \otimes \Comp(\Hils)
  \]
  by Lemma~\ref{lem:boxtimes_Podles}.  We also have
  \(\iota_{C_2}(f(C_1))\iota_{D_2}(g(D_1))\cdot \Comp(\Hils)_3
  \subseteq f(C_1)\otimes g(D_1)\otimes \Comp(\Hils)\).
  Applying slice maps to \(C_2\) and~\(D_2\), we get
  \(f(C_1)=C_2\) and \(g(D_1)=D_2\).
\end{proof}

Now we apply
Proposition~\ref{pro:functoriality_injective_surjective} to
the equivariant embeddings \(\gamma\colon C\to C\otimes A\) and
\(\delta\colon D\to D\otimes B\) provided in
Lemma~\ref{lemm:G_Cst_alg_indentific} to get an embedding
\[
(C,\gamma)\boxtimes_\bichar (D,\delta)
\to
C\otimes D \otimes (A,\Comult)\boxtimes_\bichar (B,\Comult).
\]
Thus we may describe \((C,\gamma)\boxtimes_\bichar (D,\delta)\)
as the crossed product generated by the embeddings
\((\Id\otimes\iota_A)\gamma_{13}\) of~\(C\) and
\((\Id\otimes\iota_B)\delta_{23}\) of~\(D\) into \(C\otimes D
\otimes (A,\Comult)\boxtimes_\bichar (B,\Comult)\).
This description is particularly useful if we know
\((A,\Comult)\boxtimes_\bichar (B,\Comult)\) more explicitly.

\subsection{Exactness on equivariantly semi-split extensions}
\label{sec:exact_semisplit}

\begin{proposition}
  \label{pro:boxtimes_exact_semisplit}
  The functor \(\blank\boxtimes_\bichar D\) maps an extension
  \(C_1\into C_2\prto C_3\) of \(\G\)\nb-\(\Cst\)\nb-algebras
  with a \(\G\)\nb-equivariant completely bounded section to an
  extension of \(\Cst\)\nb-algebras with a completely bounded
  section.  If the section \(C_3\to C_2\) is an equivariant
  \Star{}homomorphism, completely positive or completely
  contractive, then so is the induced section
  \(C_3\boxtimes_\bichar D\to C_2\boxtimes_\bichar D\).
  Analogous statements hold for the functor
  \(C\boxtimes_\bichar \blank\).
\end{proposition}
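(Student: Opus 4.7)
The strategy is to combine functoriality with a completely bounded idempotent built from the section. Applying Lemma~\ref{lem:boxtimes_functorial} with $\Id_D$ paired with each of $\iota$, $\pi$, and $s$ produces nondegenerate $^*$-homomorphisms $\iota\boxtimes\Id_D\colon C_1\boxtimes_\bichar D\to C_2\boxtimes_\bichar D$ and $\pi\boxtimes\Id_D\colon C_2\boxtimes_\bichar D\to C_3\boxtimes_\bichar D$, together with a map $s\boxtimes\Id_D\colon C_3\boxtimes_\bichar D\to C_2\boxtimes_\bichar D$ of the same type (completely bounded, completely contractive, completely positive, or an equivariant $^*$-homomorphism) as $s$. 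Proposition~\ref{pro:functoriality_injective_surjective} makes $\iota\boxtimes\Id_D$ injective and $\pi\boxtimes\Id_D$ surjective, and bifunctoriality of $\boxtimes_\bichar$ gives $(\pi\boxtimes\Id_D)\circ(\iota\boxtimes\Id_D)=0$ from $\pi\circ\iota=0$ and $(\pi\boxtimes\Id_D)\circ(s\boxtimes\Id_D)=\Id_{C_3\boxtimes_\bichar D}$ from $\pi\circ s=\Id$. So $s\boxtimes\Id_D$ is already a section of $\pi\boxtimes\Id_D$ of the required type, and only exactness in the middle needs to be established.

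For this, consider the completely bounded idempotent
\[
P\defeq \Id_{C_2\boxtimes_\bichar D}-(s\boxtimes\Id_D)\circ(\pi\boxtimes\Id_D).
\]
Any $x\in\ker(\pi\boxtimes\Id_D)$ satisfies $P(x)=x$, so it is enough to show that $P$ takes values in the closed $^*$\nb-subalgebra $(\iota\boxtimes\Id_D)(C_1\boxtimes_\bichar D)$. Since $P$ is norm continuous and the products $\iota_{C_2}(c)\cdot\iota_D(d)$ are linearly dense in $C_2\boxtimes_\bichar D$, this reduces to a computation on such generators. The explicit formulas for the functorial lifts in Lemma~\ref{lem:boxtimes_functorial} yield
\[
P\bigl(\iota_{C_2}(c)\cdot\iota_D(d)\bigr)
= \iota_{C_2}\bigl(c-s(\pi(c))\bigr)\cdot\iota_D(d).
\]
Because $\pi(c-s(\pi(c)))=0$, the element $c-s(\pi(c))$ lies in $\iota(C_1)$; the equivariance of $\iota$ identifies $\iota_{C_2}\circ\iota$ with $(\iota\otimes\Id_D\otimes\Id)\circ\iota_{C_1}$, so the right hand side lies in $(\iota\boxtimes\Id_D)(C_1\boxtimes_\bichar D)$.

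The analogous statement for the functor $C\boxtimes_\bichar\blank$ follows from Proposition~\ref{pro:symmetry_of_twisted_tens_prod}, which canonically identifies $C\boxtimes_\bichar\blank$ with the right-hand variable of $\blank\boxtimes_{\Dubichar}C$ applied to equivariant $\G[H]$-extensions, reducing to the case already handled. The only delicate step is the computation of $P$ on the generators, which rests squarely on the explicit formula $f\boxtimes_\bichar g\colon\iota_{C_1}(c)\iota_{D_1}(d)\mapsto\iota_{C_2}(f(c))\iota_{D_2}(g(d))$ from Lemma~\ref{lem:boxtimes_functorial}; everything else is an immediate consequence of that lemma together with Proposition~\ref{pro:functoriality_injective_surjective}.
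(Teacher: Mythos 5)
Your proof is correct and is essentially the paper's argument made explicit: the paper observes that the section gives a biproduct decomposition \(C_2\cong C_1\oplus C_3\) in the additive category of \(\G\)\nb-equivariant completely bounded maps and that \(\blank\boxtimes_\bichar D\) is an additive functor, while your idempotent \(P=\Id-(s\boxtimes_\bichar\Id_D)\circ(\pi\boxtimes_\bichar\Id_D)\) is exactly the image under that functor of the projection onto the \(C_1\)\nb-summand, verified on generators. Both arguments rest on the same inputs, namely Lemma~\ref{lem:boxtimes_functorial} and Proposition~\ref{pro:functoriality_injective_surjective}.
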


\begin{proof}
  We have \(C_1\oplus C_3\cong C_2\) in the additive category
  of \(\G\)\nb-equivariant completely bounded maps, using the
  inclusion map \(C_1\to C_2\) and the section \(s\colon C_3\to
  C_2\).  Since \(\blank \boxtimes_\bichar D\) is an additive
  functor, this implies \(C_1\boxtimes_\bichar D \oplus
  C_3\boxtimes_\bichar D \cong C_2\boxtimes_\bichar D\) in the
  category of completely bounded maps.  Thus
  \[
  C_1\boxtimes_\bichar D \to C_2\boxtimes_\bichar D \to
  C_3\boxtimes_\bichar D
  \]
  is an extension of \(\Cst\)\nb-algebras with
  \(s\boxtimes_\bichar \Id_D\) as a completely bounded linear
  section.  This section is again a \Star{}homomorphism,
  completely contractive, or completely positive if~\(s\) is
  so.
\end{proof}

The functor~\(\blank\boxtimes_\bichar D\) cannot be exact for
arbitrary extensions because this already fails for the
commutative minimal tensor product.  An
\(\G[H]\)\nb-\(\Cst\)\nb-algebra~\(D\) deserves to be called
``exact'' if \(\blank\boxtimes_\bichar D\) is an exact functor
for all~\(\G\) and all bicharacters
\(\bichar\in\U(\hat{A}\hot\hat{B})\).  We plan the study this
notion in future work.

\subsection{Functoriality for correspondences}
\label{sec:functoriality_corr}

Next we want to show that~\(\boxtimes_\bichar\) is functorial
for equivariant correspondences.  Recall that a correspondence
between two \(\Cst\)\nb-algebras \(C_1\) and~\(C_2\) is a
Hilbert \(C_2\)\nb-module~\(\Hilm\) with a nondegenerate left
\(C_1\)\nb-action (by adjointable operators).  In this section,
we assume familiarity with Hilbert modules,
see~\cite{Lance:Hilbert_modules}.

We want to show that a \(\G\)\nb-equivariant correspondence
\(\Hilm\colon C_1\to C_2\) and an \(\G[H]\)\nb-equivariant
correspondence \(\Hilm[F]\colon D_1\to D_2\) induce a
correspondence
\[
\Hilm\boxtimes_\bichar \Hilm[F]\colon C_1\boxtimes_\bichar D_1
\to C_2\boxtimes_\bichar D_2
\]
with suitable functoriality properties, including compatibility
with the composition of correspondences: given further
equivariant correspondences \(\Hilm_2\colon C_2\to C_3\) and
\(\Hilm[F]_2\colon D_2\to D_3\), there is a natural isomorphism
of correspondences
\[
(\Hilm\otimes_{C_2} \Hilm_2) \boxtimes_\bichar
(\Hilm[F]\otimes_{C_2} \Hilm[F]_2)
\cong (\Hilm \boxtimes_\bichar\Hilm[F])
\otimes_{C_2\boxtimes_\bichar D_2}
(\Hilm_2\boxtimes_\bichar \Hilm[F]_2).
\]
This also implies that if \(\Hilm\) and~\(\Hilm[F]\) are
equivariant Morita--Rieffel equivalences (that is, full Hilbert
bimodules), then \(\Hilm\boxtimes_\bichar \Hilm[F]\) is a
Morita--Rieffel equivalence.

Quantum group coactions on Hilbert modules are defined by Baaj
and Skandalis in \cite{Baaj-Skandalis:Hopf_KK}*{Definition
  2.2}, but without considering Podle\'s' continuity condition.
Therefore, we add one condition to our definition.

\begin{definition}
  \label{def:cont_coaction_Hilbert_module}
  A \emph{\(\G\)\nb-equivariant Hilbert module} over a
  \(\G\)\nb-\(\Cst\)\nb-algebra \((C,\gamma)\) is a Hilbert
  \(C\)\nb-module~\(\Hilm\) with a coaction \(\epsilon\colon
  \Hilm\to\tilde\Mult(\Hilm\otimes A)\) with the following
  properties:
  \begin{enumerate}
  \item \(\epsilon(\xi)\gamma(c) = \epsilon(\xi c)\) for
    \(\xi\in\Hilm\), \(c\in C\);
  \item \(\gamma(\langle \xi,\eta\rangle_C) = \langle
    \epsilon(\xi),\epsilon(\eta)\rangle_{\tilde\Mult(C\otimes
      A)}\);
  \item \(\epsilon(\Hilm)\cdot (1\otimes A) = \Hilm\otimes A\);
  \item \((1\otimes A) \cdot \epsilon(\Hilm) = \Hilm\otimes
    A\);
  \item \((\epsilon\otimes\Id_A)\epsilon =
    (\Id_{\Hilm}\otimes\epsilon)\epsilon\).
  \end{enumerate}
\end{definition}
Here
\[
\tilde\Mult(\Hilm\otimes A) \defeq
\{T\in\Bound(C\otimes A,\Hilm\otimes A) \mid
(1_{\Hilm}\otimes A)T\cup T(1_C\otimes A) \subseteq
\Hilm\otimes A \},
\]
where~\(\Bound\) means adjointable operators between Hilbert
modules.  Condition~(5) uses canonical extensions of
\(\epsilon\otimes\Id_A\) and \(\Id_A\otimes\epsilon\) to maps
\[
\Bound(C\otimes A,\Hilm\otimes A) \to
\Bound(C\otimes A\otimes A,\Hilm\otimes A\otimes A),
\]
which are described in \cite{Baaj-Skandalis:Hopf_KK}*{Remarque
  2.5}.  The map~\(\epsilon\) is automatically norm-isometric
by \cite{Baaj-Skandalis:Hopf_KK}*{Proposition 2.4}.

Since~\(\gamma\) satisfies the Podle\'s condition and
\(\epsilon(\Hilm) = \epsilon(\Hilm)\cdot\gamma(C)\), our
condition~(3) is equivalent to \(\epsilon(\Hilm)\cdot (C\otimes
A) = \Hilm\otimes A\).  Thus the conditions in
\cite{Baaj-Skandalis:Hopf_KK}*{Definition 2.2} are equivalent
to our conditions (1)--(3) and~(5).

\begin{remark}
  \label{rem:Hilbert_bimodule_automatically_continuous}
  Our definition and the one by Baaj and Skandalis give the
  same definition for Hilbert bimodules and hence the same
  notion of equivariant Morita--Rieffel equivalence (provided
  the \(\Cst\)\nb-algebras involved carry continuous
  coactions).

  A \emph{Hilbert bimodule} between \(C_1\) and~\(C_2\) is both
  a right Hilbert \(C_2\)\nb-module and a left Hilbert
  \(C_1\)\nb-module, such that the left and right module
  structures commute and the inner products satisfy \(\langle
  \xi,\eta\rangle_{C_1} \cdot \zeta = \xi \cdot \langle \eta,
  \zeta\rangle_{C_2}\) for all \(\xi,\eta,\zeta\in\Hilm\).

  The left and right Hilbert module structures both give the
  same multiplier space \(\tilde\Mult(\Hilm\otimes A)\) because
  \(\Comp(\Hils\otimes A)\) maps \(\tilde\Mult(\Hilm\otimes
  A)\) into \(\Hilm\otimes A\).

  A \emph{\(\G\)\nb-equivariant Hilbert bimodule} is a Hilbert
  bimodule with a \(\G\)\nb-coaction \(\epsilon\colon \Hilm\to
  \tilde\Mult(\Hilm\otimes A)\) that satisfies conditions
  (1)--(5) both for the left and the right Hilbert module
  structure.  There is, however, some duplication here.
  Condition~(5) is the same for the left and right Hilbert
  module structure, and the change between left and right
  exchanges conditions (3) and~(4).  Thus if both Hilbert
  module structures satisfy conditions (1)--(3) and~(5), then
  they both satisfy (1)--(5).  Hence the definitions here and
  in~\cite{Baaj-Skandalis:Hopf_KK} give the same notion of
  Hilbert bimodule.
\end{remark}

The \emph{linking algebra} associated to a Hilbert
\(C\)\nb-module~\(\Hilm\) is the algebra of compact operators
on \(C\oplus\Hilm\) with its block decomposition into
\(\Comp(C,C)\cong C\), \(\Comp(C,\Hilm)\cong\Hilm\),
\(\Comp(\Hilm,C)\cong \Hilm^*\) and \(\Comp(\Hilm,\Hilm) =
\Comp(\Hilm)\).  A \(\G\)\nb-coaction on~\(\Hilm\) induces a
coaction \(\gamma'\colon \Comp(C\oplus\Hilm)\to
\Comp(C\oplus\Hilm)\otimes A\) that is compatible with this
block decomposition by
\cite{Baaj-Skandalis:Hopf_KK}*{Proposition 2.7};
\(\gamma'\)~restricts to \(\epsilon\) and~\(\gamma\) on the
blocks \(\Hilm\) and~\(C\) in \(\Comp(C\oplus\Hilm)\).  Under
the assumptions in~\cite{Baaj-Skandalis:Hopf_KK}, this coaction
need not satisfy the Podle\'s condition, even if~\(\gamma\)
does.  Our additional condition~(4) ensures this because it is
equivalent to \(\gamma'(\Hilm^*)\cdot (1\otimes A) =
\Hilm^*\otimes A\), and this implies
\(\gamma'(\Comp(\Hilm))\cdot (1\otimes A) = \Comp(\Hilm)\otimes
A\) because \(\Comp(\Hilm)=\Hilm\cdot\Hilm^*\).  Condition~(3)
and the continuity of~\(\gamma\) give \(\gamma'(\Hilm)\cdot
(1\otimes A) = \Hilm\otimes A\) and \(\gamma'(C)\cdot (1\otimes
A) = C\otimes A\).

\begin{proposition}
  \label{pro:boxtimes_Hilbert_modules}
  Let \(\Hilm\) be a \(\G\)\nb-equivariant Hilbert module over
  \((C,\gamma)\) and let \(\Hilm[F]\) be a
  \(\G\)\nb-equivariant Hilbert module over \((D,\delta)\).
  Let \(C'\defeq \Comp(C\oplus\Hilm)\) and \(D'\defeq
  \Comp(D\oplus\Hilm[F])\) with the induced continuous
  coactions \(\gamma'\) and~\(\delta'\).  Choose a
  \(\bichar\)\nb-Heisenberg pair \((\alpha,\beta)\) and view
  \(C\boxtimes_\bichar D\) and \(\Comp(\Hilm)\boxtimes_\bichar
  \Comp(\Hilm[F])\) as \(\Cst\)\nb-subalgebras of
  \(\Mult(C'\otimes D'\otimes\Comp(\Hils))\).  Then
  \[
  \Hilm \boxtimes_\bichar \Hilm[F] \defeq
  \iota_{C'}(\Hilm)\cdot \iota_{D'}(\Hilm[F]) =
  \iota_{D'}(\Hilm[F]) \cdot \iota_{C'}(\Hilm)
  \]
  is a Hilbert module over \(C\boxtimes_\bichar D\), where the
  right \(C\boxtimes_\bichar D\)-module structure is the
  multiplication in \(\Mult(C'\otimes D'\otimes\Comp(\Hils))\),
  and the \(C\boxtimes_\bichar D\)-valued inner product is
  \(\langle \xi,\eta\rangle \defeq \xi^*\cdot\eta\).  Furthermore,
  \[
  \Comp(\Hilm\boxtimes_\bichar \Hilm[F]) \cong
  \Comp(\Hilm)\boxtimes_\bichar \Comp(\Hilm[F]).
  \]
\end{proposition}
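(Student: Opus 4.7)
The plan is to treat $\Hilm\boxtimes_\bichar\Hilm[F]$ as a corner inside the linking algebra crossed product $C'\boxtimes_\bichar D'$ (which is a well-defined $\Cst$-algebra by the already-established theory). The first and central ingredient is to apply Lemma~\ref{lem:boxtimes_crossed_product} with $X=\Hilm\subseteq C'$ and $Y=\Hilm[F]\subseteq D'$. Since $\gamma'$ and~$\delta'$ restrict on the off-diagonal blocks to the Hilbert module coactions $\epsilon$ and its analogue on~$\Hilm[F]$, the Podle\'s-type conditions $\gamma'(\Hilm)\cdot(1\otimes A)=\Hilm\otimes A$ and $\delta'(\Hilm[F])\cdot(1\otimes B)=\Hilm[F]\otimes B$ that Lemma~\ref{lem:boxtimes_crossed_product} requires are exactly condition~(3) of Definition~\ref{def:cont_coaction_Hilbert_module}. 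This delivers the equality $\iota_{C'}(\Hilm)\cdot\iota_{D'}(\Hilm[F])=\iota_{D'}(\Hilm[F])\cdot\iota_{C'}(\Hilm)$ and lets me define $\Hilm\boxtimes_\bichar\Hilm[F]$ as this common closed subspace of $\Mult(C'\otimes D'\otimes\Comp(\Hils))$.

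Next I would verify the Hilbert module axioms directly inside the ambient $\Cst$-algebra. For the inner product, given $\xi,\eta\in\Hilm\boxtimes_\bichar\Hilm[F]$, the product $\xi^*\eta$ expands (using $\iota_{C'}(\Hilm^*)\cdot\iota_{C'}(\Hilm)=\iota_{C'}(\Hilm^*\cdot\Hilm)\subseteq\iota_{C'}(C)$ and the commutation from Lemma~\ref{lem:boxtimes_crossed_product} applied now with $X=C$, $Y=\Hilm[F]$) into $\iota_{D'}(\Hilm[F])^*\cdot\iota_C(C)\cdot\iota_{D'}(\Hilm[F])\subseteq\iota_D(D)\cdot\iota_C(C)=C\boxtimes_\bichar D$, since $\iota_{C'}$ and~$\iota_{D'}$ restrict to $\iota_C$ and~$\iota_D$ on the diagonal blocks because $\gamma'$ and~$\delta'$ do. The right $C\boxtimes_\bichar D$-action preserves $\Hilm\boxtimes_\bichar\Hilm[F]$ by the same commutation trick together with $\Hilm\cdot C=\Hilm$ and $\Hilm[F]\cdot D=\Hilm[F]$. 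Completeness is automatic: the subspace is norm-closed by construction, and the $\Cst$-identity in the ambient algebra forces the operator norm to agree with the Hilbert module norm $\sqrt{\|\langle\xi,\xi\rangle\|}$.

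For the identification $\Comp(\Hilm\boxtimes_\bichar\Hilm[F])\cong\Comp(\Hilm)\boxtimes_\bichar\Comp(\Hilm[F])$, I would compute
\[
(\Hilm\boxtimes_\bichar\Hilm[F])\cdot(\Hilm\boxtimes_\bichar\Hilm[F])^*
= \iota_{C'}(\Hilm)\cdot\iota_{D'}(\Comp(\Hilm[F]))\cdot\iota_{C'}(\Hilm^*),
\]
and then commute $\iota_{D'}(\Comp(\Hilm[F]))$ past $\iota_{C'}(\Hilm^*)$ via Lemma~\ref{lem:boxtimes_crossed_product} to obtain $\iota_{C'}(\Comp(\Hilm))\cdot\iota_{D'}(\Comp(\Hilm[F]))=\Comp(\Hilm)\boxtimes_\bichar\Comp(\Hilm[F])$. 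This final commutation requires the Podle\'s condition $\gamma'(\Hilm^*)\cdot(1\otimes A)=\Hilm^*\otimes A$, which is exactly the adjoint of condition~(4) in Definition~\ref{def:cont_coaction_Hilbert_module}, and the analogous condition for $\Comp(\Hilm[F])$, which follows since $\Comp(\Hilm[F])=\Hilm[F]\cdot\Hilm[F]^*$ is an $\G[H]$-$\Cst$-algebra.

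The main obstacle I anticipate is bookkeeping around the Podle\'s-type conditions: Lemma~\ref{lem:boxtimes_crossed_product} is applied four times with different pairs of subspaces ($\Hilm$ vs.~$\Hilm[F]$; $C$ vs.~$\Hilm[F]$; $\Hilm^*$ vs.~$\Comp(\Hilm[F])$), and each invocation needs the correct Podle\'s condition on each side. This is precisely why condition~(4) was added to Definition~\ref{def:cont_coaction_Hilbert_module}: without it, $\gamma'(\Hilm^*)$ would fail the Podle\'s condition and the compacts calculation would collapse. Verifying that the restrictions of $\iota_{C'}$ to $C$ and $\Comp(\Hilm)$ agree with $\iota_C$ and $\iota_{\Comp(\Hilm)}$ (as subsets of the ambient multiplier algebra) is straightforward from the block-diagonal compatibility of~$\gamma'$.
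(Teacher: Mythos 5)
Your proposal is correct and follows essentially the same route as the paper: both proofs rest entirely on Lemma~\ref{lem:boxtimes_crossed_product} applied to various pairs of subspaces of the linking algebras (with the Podle\'s-type hypotheses supplied by conditions (3) and~(4) of Definition~\ref{def:cont_coaction_Hilbert_module}), followed by the same commutation computations for the module action, the inner product, and the identification of the compacts. Your write-up is merely more explicit than the paper's about which Podle\'s condition each invocation of the lemma requires.
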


\begin{proof}
  All this follows from
  Lemma~\ref{lem:boxtimes_crossed_product}.
  Lemma~\ref{lem:boxtimes_crossed_product} for \(X=\Hilm\),
  \(Y=\Hilm[F]\) asserts \(\iota_{C'}(\Hilm)\cdot
  \iota_{D'}(\Hilm[F]) = \iota_{D'}(\Hilm[F]) \cdot
  \iota_{C'}(\Hilm)\).  To check that \(\Hilm\boxtimes_\bichar
  \Hilm[F]\) is closed under right multiplication under
  \(C\boxtimes_\bichar D\), we compute
  \begin{align*}
    (\Hilm\boxtimes_\bichar \Hilm[F])
    \cdot (C\boxtimes_\bichar D)
    &= \iota_{C'}(\Hilm) \cdot \iota_{D'}(\Hilm[F])
    \cdot \iota_{D'}(D) \cdot \iota_{C'}(C)
    \\ &= \iota_{C'}(\Hilm) \cdot \iota_{D'}(\Hilm[F])
    \cdot \iota_{C'}(C)
    = \iota_{D'}(\Hilm[F])\cdot
    \iota_{C'}(\Hilm) \cdot \iota_{C'}(C)
    \\ &= \iota_{D'}(\Hilm[F])\cdot \iota_{C'}(\Hilm)
    = \Hilm\boxtimes_\bichar \Hilm[F].
  \end{align*}
  Similar computations give
  \begin{align*}
    (\Hilm\boxtimes_\bichar \Hilm[F])^*
    \cdot (\Hilm\boxtimes_\bichar \Hilm[F])
    &\subseteq C\boxtimes_\bichar D,\\
    (\Hilm\boxtimes_\bichar \Hilm[F])
    \cdot (\Hilm\boxtimes_\bichar \Hilm[F])^*
    &= \Comp(C)\boxtimes_\bichar \Comp(D).
  \end{align*}
  The first line completes the proof that
  \(\Hilm\boxtimes_\bichar \Hilm[F]\) is a Hilbert module over
  \(C\boxtimes_\bichar D\).  The second line says that
  \(\Comp(\Hilm\boxtimes_\bichar \Hilm[F]) \cong
  \Comp(\Hilm)\boxtimes_\bichar \Comp(\Hilm[F])\).
\end{proof}

Let \(\Hilm_1\) and~\(\Hilm_2\) be \(\G\)\nb-equivariant
Hilbert modules over~\(C\) and let \(S\colon
\Hilm_1\to\Hilm_2\) be an adjointable operator.  We want to
construct an induced adjointable operator
\[
S\boxtimes_\bichar \Id_{\Hilm[F]}\colon
\Hilm_1\boxtimes_\bichar\Hilm[F]
\to \Hilm_2\boxtimes_\bichar\Hilm[F].
\]
We may view~\(S\) as an adjointable operator on \(\Hilm\defeq
\Hilm_1\oplus\Hilm_2\) that vanishes on~\(\Hilm_2\) and has
image contained in~\(\Hilm_2\).  There is a canonical unital
\Star{}homomorphism
\[
\Bound(\Hilm) \cong \Mult(\Comp(\Hilm))
\to \Mult(\Comp(\Hilm)\boxtimes_\bichar \Comp(\Hilm[F]))
\cong \Mult(\Comp(\Hilm\boxtimes_\bichar\Hilm[F]))
\cong \Bound(\Hilm\boxtimes_\bichar\Hilm[F]).
\]
We apply it to~\(S\) and then notice that the resulting
operator is the extension by zero of an adjointable operator
\(\Hilm_1\boxtimes_\bichar\Hilm[F]\to\Hilm_2\boxtimes_\bichar\Hilm[F]\).
This defines \(S\boxtimes_\bichar \Id_{\Hilm[F]}\).  The map
\(S\mapsto S\boxtimes_\bichar \Id_{\Hilm[F]}\) is a unital,
strictly continuous \Star{}homomorphism.

A similar construction turns an adjointable operator \(T\colon
\Hilm[F]_1\to\Hilm[F]_2\) between \(\G[H]\)\nb-equivariant
Hilbert \(D\)\nb-modules \(\Hilm[F]_1\) and~\(\Hilm[F]_2\) into
an adjointable operators
\[
\Id_{\Hilm}\boxtimes_\bichar T\colon
\Hilm\boxtimes_\bichar\Hilm[F]_1 \to
\Hilm\boxtimes_\bichar\Hilm[F]_2.
\]

The adjointable operators \(S\boxtimes_\bichar \Id_{\Hilm[F]}\)
and \(\Id_{\Hilm}\boxtimes_\bichar T\) usually do not commute
with each other, so that \(S\boxtimes_\bichar T\) is not
defined unambiguously.  However, if \(S\) and~\(T\) are both
equivariant and unitary, then they are compatible with all
structure that is used to define \(\Hilm_1\boxtimes_\bichar
\Hilm[F]_1\) and \(\Hilm_2\boxtimes_\bichar \Hilm[F]_2\) and
hence must induce an isomorphism
\[
S\boxtimes_\bichar T\colon
\Hilm_1\boxtimes_\bichar \Hilm[F]_1
\to \Hilm_2\boxtimes_\bichar \Hilm[F]_2.
\]

Indeed, the following lemma shows that \(S\boxtimes_\bichar
\Id_{\Hilm[F]}\) and \(\Id_{\Hilm}\boxtimes_\bichar T\) commute
whenever \(S\) or~\(T\) is equivariant.

\begin{lemma}
  \label{lem:equivariant_commute}
  Let \(x\in\Mult(C)\) and \(y\in\Mult(D)\) and assume that
  \(x\)~is \(\G\)\nb-invariant or that \(y\)~is
  \(\G[H]\)\nb-invariant, that is, \(\gamma(x)=x\otimes 1\) or
  \(\delta(y)=y\otimes 1\).  Then \([\iota_C(x),\iota_D(y)]=0\)
  in \(C\boxtimes_\bichar D\).
\end{lemma}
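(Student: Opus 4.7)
The plan is to reduce the commutator to a product of two multipliers living in disjoint tensor legs of \(\Mult(C\otimes D\otimes\Comp(\Hils))\), where commutativity is automatic. The Heisenberg relation plays no role here; only the nondegeneracy of the representations \(\alpha\), \(\beta\) and of the coactions \(\gamma\), \(\delta\) is used.

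First I would extend \(\iota_C\) and \(\iota_D\) to their unique strictly continuous \Star{}homomorphisms on the multiplier algebras \(\Mult(C)\) and \(\Mult(D)\); this is possible because \(\iota_C\in\Mor(C,C\boxtimes_\bichar D)\) and \(\iota_D\in\Mor(D,C\boxtimes_\bichar D)\) are nondegenerate. The defining formulas
\[
\iota_C(x) = ((\Id_C\otimes\alpha)\gamma(x))_{13},\qquad
\iota_D(y) = ((\Id_D\otimes\beta)\delta(y))_{23}
\]
continue to hold for \(x\in\Mult(C)\) and \(y\in\Mult(D)\) by strict continuity of the canonical extensions of \(\gamma\), \(\delta\), \(\alpha\), \(\beta\) and of the leg embeddings.

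Next I would handle the two cases. If \(\gamma(x)=x\otimes 1_A\), then nondegeneracy of \(\alpha\) forces \(\alpha(1_A)=1_\Hils\), whence \(\iota_C(x)=x\otimes 1_D\otimes 1_\Hils\) lives only in the first leg of \(\Mult(C\otimes D\otimes\Comp(\Hils))\). On the other hand, \(\iota_D(y)=((\Id_D\otimes\beta)\delta(y))_{23}\) has trivial component in the first leg, i.e.\ it lies in \(1_C\otimes\Mult(D\otimes\Comp(\Hils))\). Two multipliers occupying disjoint legs commute, so \([\iota_C(x),\iota_D(y)]=0\). Symmetrically, if \(\delta(y)=y\otimes 1_B\), nondegeneracy of \(\beta\) gives \(\iota_D(y)=1_C\otimes y\otimes 1_\Hils\), which lies only in the second leg and commutes with \(\iota_C(x)\) whose second leg is \(1_D\). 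The commutator, computed inside \(\Mult(C\otimes D\otimes\Comp(\Hils))\), automatically lies in \(\Mult(C\boxtimes_\bichar D)\).

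There is no serious obstacle; the statement is essentially a tautology once the multiplier extensions are in place. The only mildly delicate point is checking that the formula for \(\iota_C\) (resp.\ \(\iota_D\)) survives the passage from \(C\) to \(\Mult(C)\), which is a routine strict continuity argument. The lemma will subsequently be used to invoke equivariance of \(S\) or \(T\) in showing that \(S\boxtimes_\bichar\Id_{\Hilm[F]}\) and \(\Id_\Hilm\boxtimes_\bichar T\) commute.
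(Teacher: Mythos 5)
Your proof is correct and follows the same route as the paper: the paper likewise observes that $\G$\nb-invariance of~$x$ gives $\iota_C(x)=x\otimes 1\otimes 1$, which commutes with $\iota_D(y)$ because the latter lives in the legs $2$ and~$3$ of $\Mult(C\otimes D\otimes\Comp(\Hils))$, and symmetrically for invariant~$y$. Your added remarks about the strictly continuous extensions to multiplier algebras are a reasonable elaboration of a point the paper leaves implicit.
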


\begin{proof}
  If~\(x\) is \(\G\)\nb-invariant, then \(\iota_C(x)=x\otimes
  1\otimes 1\) in \(\Mult(C\otimes D\otimes\Comp(\Hils))\).
  This commutes with \(\iota_D(y)\in
  \Mult(D\otimes\Comp(\Hils))_{23}\) because it lives in a
  different leg.  The argument for \(\G[H]\)\nb-invariant~\(y\)
  is the same.
\end{proof}

Now we turn from Hilbert modules to correspondences.

A \emph{\(\G\)\nb-equivariant correspondence} from~\(C_1\)
to~\(C_2\) is a \(\G\)\nb-equivariant Hilbert module~\(\Hilm\)
over~\(C_2\) with a nondegenerate representation of~\(C_1\),
that is, with a morphism \(f\colon
C_1\to\Bound(\Hilm)=\Mult(\Comp(\Hilm))\).  Usually, we do not
mention~\(f\) and instead equip~\(\Hilm\) with the left
\(C_1\)\nb-module structure given by~\(f\); thus a
correspondence is a bimodule with a \(C_2\)\nb-valued right
inner product and a \(\G\)\nb-coaction with suitable
properties.

Let~\(\Hilm\) with \(f\colon C_1\to\Bound(\Hilm)\) be a
\(\G\)\nb-equivariant correspondence from~\(C_1\) to~\(C_2\)
and let~\(\Hilm[F]\) with \(g\colon D_1\to\Bound(\Hilm[F])\) be
an \(\G[H]\)\nb-equivariant correspondence from~\(D_1\)
to~\(D_2\).  Then we get a Hilbert module
\(\Hilm\boxtimes_\bichar \Hilm[F]\) over \(C_2\boxtimes_\bichar
D_2\) and an induced morphism
\[
C_1\boxtimes_\bichar D_1 \to
\Mult(\Comp(\Hilm)\boxtimes_\bichar \Comp(\Hilm[F]))
\cong \Mult(\Comp(\Hilm\boxtimes_\bichar\Hilm[F]))
\cong \Bound(\Hilm\boxtimes_\bichar\Hilm[F])
\]
by Proposition~\ref{pro:boxtimes_Hilbert_modules}.  This gives
a correspondence from \(C_1\boxtimes_\bichar D_1\) to
\(C_2\boxtimes_\bichar D_2\).

Let \(S\colon \Hilm_1\to \Hilm_2\) and \(T\colon \Hilm[F]_1\to
\Hilm[F]_2\) be isomorphisms of equivariant correspondences
(that is, equivariant unitaries commuting with the left module
structures).  Then
\[
S\boxtimes_\bichar T\colon
\Hilm_1\boxtimes_\bichar\Hilm[F]_1 \to
\Hilm_2\boxtimes_\bichar\Hilm[F]_2
\]
is an isomorphism of correspondences.  Thus our construction
descends to isomorphism classes of correspondences.

Next we consider the composition of correspondences.
Let~\(C_i\) for \(i=1,2,3\) be \(\G\)\nb-\(\Cst\)-algebras and
let~\(D_i\) for \(i=1,2,3\) be \(\G[H]\)\nb-\(\Cst\)-algebras;
let~\(\Hilm_1\) be a \(\G\)\nb-equivariant correspondence
from~\(C_1\) to~\(C_2\), let~\(\Hilm_2\) be a
\(\G\)\nb-equivariant correspondence from~\(C_2\) to~\(C_3\),
let~\(\Hilm[F]_1\) be an \(\G[H]\)\nb-equivariant
correspondence from~\(D_1\) to~\(D_2\), and let~\(\Hilm[F]_2\)
be an \(\G[H]\)\nb-equivariant correspondence from~\(D_2\)
to~\(D_3\).  The composite correspondences
\(\Hilm_1\otimes_{C_2} \Hilm_2\) and \(\Hilm[F]_1\otimes_{D_2}
\Hilm[F]_2\) are again equivariant (see
\cite{Baaj-Skandalis:Hopf_KK}*{Proposition 2.10}, our extra
continuity condition is easily checked).

\begin{lemma}
  There is a natural isomorphism of correspondences
  \[
  (\Hilm_1\otimes_{C_2} \Hilm_2) \boxtimes_\bichar
  (\Hilm[F]_1\otimes_{D_2} \Hilm[F]_2)
  \cong (\Hilm_1 \boxtimes_\bichar\Hilm[F]_1)
  \otimes_{C_2\boxtimes_\bichar D_2}
  (\Hilm_2\boxtimes_\bichar \Hilm[F]_2).
  \]
\end{lemma}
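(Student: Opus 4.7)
The plan is to realize both sides of the asserted isomorphism as subspaces of a single multiplier algebra via a three-fold linking algebra on each side. Let $L \defeq \Comp(E)$ for the Hilbert $C_3$\nb-module $E \defeq (\Hilm_1 \otimes_{C_2} \Hilm_2) \oplus \Hilm_2 \oplus C_3$, and let $M$ be the analogous algebra on the $\G[H]$\nb-side. The corners of $L$ include $\Hilm_2$, $\Hilm_1 \otimes_{C_2} \Hilm_2$, and $C_3$, while $\Hilm_1$ embeds in $\Mult(L)$ as $\xi \mapsto (\eta \mapsto \xi \otimes \eta) \in \Bound(\Hilm_2, \Hilm_1\otimes_{C_2}\Hilm_2)$. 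The crucial feature is that matrix multiplication in $L$ realises composition: $\Hilm_1 \cdot \Hilm_2 = \Hilm_1 \otimes_{C_2}\Hilm_2$ as subspaces of $L$. The equivariant Hilbert module structures assemble into a continuous $\G$\nb-coaction on $L$ via the Baaj--Skandalis construction cited in the excerpt together with condition (4) of Definition \ref{def:cont_coaction_Hilbert_module}, and analogously for $M$.

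Fix a $\bichar$\nb-Heisenberg pair $(\alpha, \beta)$. The corner inclusions from the standard linking algebras of $\Hilm_2$ and of $\Hilm_1\otimes_{C_2}\Hilm_2$ (and their $\G[H]$\nb-side analogues) into $L$ and $M$ are equivariant injective \Star{}homomorphisms, so functoriality of $\boxtimes_\bichar$ realises each of $\Hilm_i \boxtimes_\bichar \Hilm[F]_i$ and $(\Hilm_1\otimes_{C_2}\Hilm_2)\boxtimes_\bichar (\Hilm[F]_1\otimes_{D_2}\Hilm[F]_2)$ inside $\Mult(L \otimes M \otimes \Comp(\Hils))$ as $\iota_L(\Hilm_i)\cdot \iota_M(\Hilm[F]_i)$ and $\iota_L(\Hilm_1\otimes_{C_2}\Hilm_2)\cdot \iota_M(\Hilm[F]_1\otimes_{D_2}\Hilm[F]_2)$ respectively, where $\iota_L, \iota_M$ are the usual morphisms extended to multipliers. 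Multiplicativity together with the linking-algebra identification $\Hilm_1\cdot\Hilm_2 = \Hilm_1\otimes_{C_2}\Hilm_2$ rewrites
\[
(\Hilm_1\otimes_{C_2}\Hilm_2)\boxtimes_\bichar (\Hilm[F]_1\otimes_{D_2}\Hilm[F]_2)
= \iota_L(\Hilm_1)\,\iota_L(\Hilm_2)\,\iota_M(\Hilm[F]_1)\,\iota_M(\Hilm[F]_2),
\]
and Lemma \ref{lem:boxtimes_crossed_product} applied to $X = \Hilm_2 \subseteq L$ and $Y = \Hilm[F]_1 \subseteq M$---whose Podle\'s hypotheses are exactly condition (3) of Definition \ref{def:cont_coaction_Hilbert_module}---commutes the middle two factors to give $(\Hilm_1\boxtimes_\bichar\Hilm[F]_1)\cdot (\Hilm_2\boxtimes_\bichar\Hilm[F]_2)$.

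It remains to identify this product subspace with the internal tensor product over $C_2\boxtimes_\bichar D_2$. The multiplication map $(x, y) \mapsto x\cdot y$ is $C_2\boxtimes_\bichar D_2$-balanced by associativity in the multiplier algebra and preserves the $C_3\boxtimes_\bichar D_3$-valued inner product via
\[
(x_1 x_2)^*(y_1 y_2) = x_2^*\,\langle x_1, y_1\rangle\, y_2 = \langle x_2,\,\langle x_1, y_1\rangle y_2\rangle,
\]
which is the defining formula for the internal-tensor-product inner product. The left $C_1\boxtimes_\bichar D_1$-action is preserved by construction, and naturality is automatic since every step is canonical. The main obstacle I foresee is the equivariant linking-algebra bookkeeping: assembling a continuous $\G$\nb-coaction on $L$ from its constituent pieces and verifying that the multiplier extension of $\iota_L$ places the non-element piece $\Hilm_1 \subseteq \Mult(L)$ into the expected subspace of $\Mult(L\otimes M\otimes\Comp(\Hils))$, so that the functoriality argument applies uniformly to all three correspondences involved.
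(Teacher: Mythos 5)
Your proposal is correct and follows essentially the same route as the paper's proof: the same three-summand Hilbert module $C_3\oplus\Hilm_2\oplus(\Hilm_1\otimes_{C_2}\Hilm_2)$ and its algebra of compact operators, the same embedding of all the data into one multiplier algebra, the commutation of the middle factors via Lemma~\ref{lem:boxtimes_crossed_product} applied to $X=\Hilm_2$, $Y=\Hilm[F]_1$, and the identification of the internal tensor product through composition and the inner product $\langle x,y\rangle=x^*y$. The "obstacle" you flag at the end is real but is resolved exactly as you suggest: the representations are compatible with the linking algebras $\Comp(C_2\oplus\Hilm_1)$ etc., so the strict extension of $\iota$ places $\Hilm_1$ where it should be.
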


\begin{proof}
  Let us assume that~\(C_i\) is a subalgebra of
  \(\Bound(\Hilm_i)\) for \(i=1,2\) (we can make these
  representations faithful by taking direct sums with suitable
  correspondences, and then argue in the end that the result
  remains true without these additional summands).  The direct
  sum
  \[
  \Hilm' \defeq C_3 \oplus \Hilm_2 \oplus
  (\Hilm_1\otimes_{C_2} \Hilm_2)
  \]
  is a \(\G\)\nb-equivariant Hilbert \(C_3\)\nb-module on which the
  \(\G\)\nb-\(\Cst\)-algebras \(C_i\) for \(i=1,2,3\) and the
  \(\G\)\nb-equivariant Hilbert modules \(\Hilm_1\), \(\Hilm_2\) and
  \(\Hilm_1\otimes_{C_2}\Hilm_2\) act by adjointable operators.
  Namely, \(C_1\)~acts by the given left action on~\(\Hilm_1\) and by
  zero on the other summands; \(C_2\)~acts by the given left action
  on~\(\Hilm_2\) and by zero on the other summands; \(C_3\)~acts on
  itself by left multiplication and by zero on the other summands;
  \(\Hilm_2\) and \(\Hilm_1\otimes_{C_2}\Hilm_2\) act by the
  isomorphisms \(\Hilm_2\cong\Comp(C_3,\Hilm_2)\) and
  \(\Hilm_1\otimes_{C_2}\Hilm_2\cong\Comp(C_3,\Hilm_1\otimes_{C_2}\Hilm_2)\)
  on~\(C_3\) and by zero on the other summands; \(\Hilm_1\) acts
  on~\(\Hilm_2\) by the map
  \[
  \Hilm_1\to\Bound(\Hilm_2,\Hilm_1\otimes_{C_2}\Hilm_2),
  \qquad
  \xi\mapsto T_\xi,
  \]
  with \(T_\xi(\eta)\defeq \xi\otimes\eta\) for all
  \(\eta\in\Hilm_2\), \(\xi\in\Hilm_1\), and~\(\Hilm_1\) acts
  by zero on the other summands.

  These representations are nicely compatible in the following
  sense: bimodule structures on our Hilbert modules are always
  represented by composition of adjointable operators, and
  inner products are always represented by \(\langle x,y\rangle
  \defeq x^*\circ y\).  Hence they extend to representations of
  the linking algebras \(\Comp(C_2\oplus \Hilm_1)\)
  of~\(\Hilm_1\), \(\Comp(C_3\oplus\Hilm_2)\) of~\(\Hilm_2\),
  and \(\Comp(C_3\oplus \Hilm_1\otimes_{C_2}\Hilm_2)\) of
  \(\Hilm_1\otimes_{C_2}\Hilm_2\).

  Let us assume similarly that
  \(D_i\subseteq\Bound(\Hilm[F]_i)\) for \(i=1,2\), and let us
  embed the \(\G[H]\)\nb-\(\Cst\)-algebras \(D_i\) for
  \(i=1,2,3\) and the \(\G[H]\)\nb-equivariant Hilbert modules
  \(\Hilm[F]_1\), \(\Hilm[F]_2\) and
  \(\Hilm[F]_1\otimes_{D_2}\Hilm[F]_2\) in a similar fashion
  into \(\Bound(\Hilm[F]')\) with
  \[
  \Hilm[F]' \defeq D_3 \oplus \Hilm[F]_2 \oplus
  (\Hilm[F]_1\otimes_{D_2} \Hilm[F]_2).
  \]

  The tensor products \(C_i\boxtimes_\bichar D_i\),
  \(\Hilm_i\boxtimes_\bichar \Hilm[F]_i\) and
  \((\Hilm_1\otimes_{C_2} \Hilm_2)\boxtimes_\bichar
  (\Hilm[F]_1\otimes_{D_2} \Hilm[F]_2)\) are all embedded into
  the multiplier algebra of
  \[
  \Comp(\Hilm') \boxtimes_\bichar \Comp(\Hilm[F]') \cong
  \Comp(\Hilm' \boxtimes_\bichar \Hilm[F]')
  \]
  by Proposition~\ref{pro:functoriality_injective_surjective}
  and Proposition~\ref{pro:boxtimes_Hilbert_modules}.

  The construction in
  Proposition~\ref{pro:boxtimes_Hilbert_modules} also shows
  that, in this representation, the bimodule structures on the
  Hilbert modules \(\Hilm_i\boxtimes_\bichar \Hilm[F]_i\) and
  \((\Hilm_1\otimes_{C_2} \Hilm_2)\boxtimes_\bichar
  (\Hilm[F]_1\otimes_{D_2} \Hilm[F]_2)\) are given by
  composition, and the inner products by \(\langle x,y\rangle
  \defeq x^*\circ y\).  In such a situation, the composite
  correspondence \((\Hilm_1 \boxtimes_\bichar\Hilm[F]_1)
  \otimes_{C_2\boxtimes_\bichar D_2} (\Hilm_2\boxtimes_\bichar
  \Hilm[F]_2)\) is realised concretely as \((\Hilm_1
  \boxtimes_\bichar\Hilm[F]_1) \cdot (\Hilm_2\boxtimes_\bichar
  \Hilm[F]_2)\) with the bimodule structure over
  \(C_1\boxtimes_\bichar D_1\) and \(C_3\boxtimes_\bichar D_3\)
  given by composition and inner product \(\langle x,y\rangle
  \defeq x^*\cdot y\).

  Lemma~\ref{lem:boxtimes_crossed_product} gives
  \begin{align*}
    (\Hilm_1\otimes_{C_2} \Hilm_2) \boxtimes_\bichar
    (\Hilm[F]_1\otimes_{D_2} \Hilm[F]_2)
    &=
    \iota_{\Comp(\Hilm')}(\Hilm_1)\cdot
    \iota_{\Comp(\Hilm')}(\Hilm_2)\cdot
    \iota_{\Comp(\Hilm[F]')}(\Hilm[F]_1)\cdot
    \iota_{\Comp(\Hilm[F]')}(\Hilm[F]_1)
    \\&=
    \iota_{\Comp(\Hilm')}(\Hilm_1)\cdot
    \iota_{\Comp(\Hilm[F]')}(\Hilm[F]_1)\cdot
    \iota_{\Comp(\Hilm')}(\Hilm_2)\cdot
    \iota_{\Comp(\Hilm[F]')}(\Hilm[F]_1)
    \\&= (\Hilm_1 \boxtimes_\bichar\Hilm[F]_1) \cdot
    (\Hilm_2\boxtimes_\bichar \Hilm[F]_2).\qedhere
  \end{align*}
\end{proof}

\subsection{Cocycle conjugacy}
\label{sec:cocycle}

As a special case of equivariant Morita--Rieffel equivalence, we may
change a coaction by a cocycle:

\begin{definition}[\cite{Baaj-Skandalis:Unitaires}*{Definition 0.4}]
  \label{def:cocycle_unitary}
  A \emph{\(\gamma\)\nb-cocycle} is a unitary
  \(u\in\Mult(C\otimes A)\) with
  \begin{equation}
    \label{cond:cocycle_unitary}
    u_{12}(\gamma\otimes\Id_A)u = (\Id_C\otimes\Comult[A])u
    \qquad\text{in }\U(C\otimes A\otimes A).
  \end{equation}
\end{definition}

We can only treat cocycles that satisfy an extra Podle\'s condition:

\begin{lemma}
  \label{lemm:cocycle_induced_coaction}
  Let \(u\in\U(C\otimes A)\) be a \(\gamma\)\nb-cocycle.
  Define a morphism \(\gamma_u\defeq \Ad_u\circ \gamma\colon
  C\to C\otimes A\).  This is a continuous coaction of~\(\G\)
  if and only if
  \begin{equation}
    \label{eq:cocycle_unitary_density}
    \gamma(C)\cdot u^*\cdot(1_C\otimes A)=C\otimes A.
  \end{equation}
\end{lemma}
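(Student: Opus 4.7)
The plan is to verify the three defining conditions of a continuous coaction for $\gamma_u = \Ad_u \circ \gamma$ one by one: injectivity, coassociativity, and the Podle\'s condition. Injectivity of $\gamma_u$ is immediate from the injectivity of $\gamma$ and the invertibility of $\Ad_u$. The map $\gamma_u$ lands in $C \otimes A$ rather than just $\Mult(C \otimes A)$ because $u \in \U(C \otimes A)$ and $\gamma(C) \subseteq C \otimes A$, so $\gamma_u(C) = u \gamma(C) u^* \subseteq C \otimes A$; and nondegeneracy of $\gamma_u$ as a morphism to $\Mult(C \otimes A)$ will follow automatically from the Podle\'s condition once established.

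For coassociativity, the key input is the cocycle identity~\eqref{cond:cocycle_unitary}, which lets us rewrite both $(\Id_C \otimes \Comult[A])(u)$ and its adjoint in terms of $u_{12}$ and $(\gamma \otimes \Id_A)(u)$. Computing
\[
(\Id_C \otimes \Comult[A]) \gamma_u(c) = (\Id_C \otimes \Comult[A])(u) \cdot (\Id_C \otimes \Comult[A])(\gamma(c)) \cdot (\Id_C \otimes \Comult[A])(u^*),
\]
applying coassociativity of $\gamma$ to the middle factor and substituting the cocycle identity on the outer factors, I expect to arrive at $u_{12} \cdot (\gamma \otimes \Id_A)(\gamma_u(c)) \cdot u_{12}^*$. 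The remaining observation is that $(\gamma_u \otimes \Id_A)(x) = u_{12} \cdot (\gamma \otimes \Id_A)(x) \cdot u_{12}^*$ for every $x \in \Mult(C \otimes A)$, because $\gamma_u = \Ad_u \circ \gamma$ is applied in the first leg only; hence the two sides match.

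For the Podle\'s condition, the whole equivalence follows from the identity $\gamma_u(C) \cdot (1_C \otimes A) = u \cdot (\gamma(C) \cdot u^* \cdot (1_C \otimes A))$ together with the fact that $u$ is a unitary multiplier, so $u \cdot X = C \otimes A$ iff $X = C \otimes A$ for any closed subspace $X \subseteq C \otimes A$. This simultaneously yields both directions of the ``if and only if'' in the statement. I expect the main obstacle to be purely bookkeeping: tracking the leg indices in the coassociativity derivation and being careful that $(\gamma_u \otimes \Id_A)$ acts on $\gamma_u(c) \in \Mult(C \otimes A)$ through conjugation by $u_{12}$ applied leg-by-leg. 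No deeper analytic input beyond the cocycle identity and standard manipulations of Podle\'s-type density conditions should be required.
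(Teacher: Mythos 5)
Your proposal is correct and follows essentially the same route as the paper: injectivity is inherited from \(\gamma\), the comodule property is obtained by pushing the cocycle identity~\eqref{cond:cocycle_unitary} through \(\Id_C\otimes\Comult[A]\) and using \((\gamma_u\otimes\Id_A)=\Ad_{u_{12}}\circ(\gamma\otimes\Id_A)\), and the Podle\'s equivalence is exactly the observation that the unitary~\(u\) maps \(C\otimes A\) onto itself. (Your parenthetical claim that \(\gamma(C)\subseteq C\otimes A\) is not part of the setup---a coaction is a priori only a morphism into \(\Mult(C\otimes A)\)---but nothing in your argument actually uses it.)
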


\begin{proof}
  The morphism~\(\gamma_u\) is faithful because~\(\gamma\) is.
  We check that it is a comodule structure:
  \[
  (\Id_C\otimes\Comult[A])(u\gamma(c)u^*)
  = u_{12}(\gamma\otimes\Id_A)(u\gamma(c))u^*u_{12}^*
  = (\gamma_u\otimes\Id_A)\gamma_u(c)
  \]
  for all \(c\in C\); the first equality uses
  \eqref{cond:cocycle_unitary} and~\eqref{eq:right_coaction}
  for~\(\gamma\); the second equality again
  uses~\eqref{cond:cocycle_unitary} for all \(c\in C\).

  Since \(u\in\U(C\otimes A)\) we have \(u(C\otimes
  A)=C\otimes A\).  Hence~\eqref{eq:cocycle_unitary_density} is
  equivalent to the Podleś condition \(u\gamma(C)u^*\cdot
  (1\otimes A) = C\otimes A\) for~\(\gamma_u\).
\end{proof}

The following result generalises
\cite{Baaj-Skandalis:Unitaires}*{Proposition 7.6}.

\begin{theorem}
  \label{the:cocycle_boxtimes_isomorphism}
  Let~\(u\) be a \(\gamma\)\nb-cocycle and let~\(v\) be a
  \(\delta\)\nb-cocycle.  Assume both satisfy the Podle\'s
  condition~\eqref{eq:cocycle_unitary_density}.  Define the
  coactions \(\gamma_u\) and~\(\delta_v\) as above.
  Then
  \[
  (C,\gamma) \boxtimes_\bichar (D,\delta) \cong
  (C,\gamma_u) \boxtimes_\bichar (D,\delta_v).
  \]
\end{theorem}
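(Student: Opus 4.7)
The strategy is to realise $u$ and $v$ as the data of equivariant Morita--Rieffel self-equivalences and then extract the isomorphism from the correspondence functoriality of $\boxtimes_\bichar$ established in Section~\ref{sec:functoriality_corr}.

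Define $\Hilm_u \defeq C$ as a Hilbert $C$-bimodule under left and right multiplication, with the standard left and right inner products $\xi\eta^*, \xi^*\eta$, equipped with the coaction $\epsilon_u\colon \Hilm_u\to \tilde\Mult(\Hilm_u\otimes A)$, $\epsilon_u(\xi) \defeq u\cdot\gamma(\xi)$. Compatibility of $\epsilon_u$ with the right $(C,\gamma)$-action and the right inner product is immediate from unitarity of~$u$; the corresponding identities for the left $(C,\gamma_u)$-action use $\gamma_u = \Ad_u\circ\gamma$; coassociativity of $\epsilon_u$ is an equivalent form of the cocycle condition~\eqref{cond:cocycle_unitary}; and the Podle\'s conditions of Definition~\ref{def:cont_coaction_Hilbert_module} reduce to the Podle\'s condition for~$\gamma$ combined with the extra hypothesis~\eqref{eq:cocycle_unitary_density}. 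Thus $\Hilm_u$ is a $\G$-equivariant Hilbert bimodule between $(C,\gamma_u)$ (left) and $(C,\gamma)$ (right); an identical construction using~$v$ produces a $\G[H]$-equivariant Hilbert bimodule $\Hilm[F]_v\defeq D$ between $(D,\delta_v)$ and $(D,\delta)$ with coaction $\epsilon_v(\eta)\defeq v\cdot\delta(\eta)$. Since left multiplication identifies $\Comp(\Hilm_u)\cong (C,\gamma_u)$ and $\Comp(\Hilm[F]_v)\cong (D,\delta_v)$, Proposition~\ref{pro:boxtimes_Hilbert_modules} then yields a right Hilbert $(C,\gamma)\boxtimes_\bichar(D,\delta)$-module $\Hilm_u\boxtimes_\bichar\Hilm[F]_v$ together with a canonical identification $\Comp(\Hilm_u\boxtimes_\bichar\Hilm[F]_v)\cong (C,\gamma_u)\boxtimes_\bichar(D,\delta_v)$.

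The theorem will follow once I verify that $\Hilm_u\boxtimes_\bichar\Hilm[F]_v$ is free of rank one over $(C,\gamma)\boxtimes_\bichar(D,\delta)$, since the chain of isomorphisms
\[
(C,\gamma_u)\boxtimes_\bichar(D,\delta_v) \cong \Comp(\Hilm_u\boxtimes_\bichar\Hilm[F]_v) \cong \Comp\bigl((C,\gamma)\boxtimes_\bichar(D,\delta)\bigr) \cong (C,\gamma)\boxtimes_\bichar(D,\delta)
\]
then concludes. A natural candidate cyclic unit vector comes from the multiplier units $1_C\in\Mult(\Hilm_u)$ and $1_D\in\Mult(\Hilm[F]_v)$: in the Heisenberg pair realisation of Proposition~\ref{pro:boxtimes_Hilbert_modules} with $(\alpha,\beta)$ a $\bichar$-Heisenberg pair on $\Hils$, it equals the unitary multiplier
\[
\xi\defeq (\Id_C\otimes\alpha)(u)_{13}\cdot(\Id_D\otimes\beta)(v)_{23}\in \U(C\otimes D\otimes\Comp(\Hils)),
\]
and a direct computation gives $\xi^*\xi=\xi\xi^*=1$. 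The main technical point --- and the hardest step of the proof --- is to check that this isometric candidate actually generates the Hilbert module, that is, $\xi\cdot\iota_C(C)\cdot\iota_D(D) = \iota_{C'}(\Hilm_u)\cdot\iota_{D'}(\Hilm[F]_v) = \Hilm_u\boxtimes_\bichar\Hilm[F]_v$. The difficulty is that $(\Id_D\otimes\beta)(v)_{23}$ does not commute with $\iota_C(C)$ on the nose, since both factors occupy the $\Comp(\Hils)$-leg; the resolution is to apply the Heisenberg commutation relation~\eqref{eq:V-Heisenberg_pair} to produce a bicharacter-valued commutator correction, which is then absorbed by the ambient Podle\'s-style identities inside $(C,\gamma)\boxtimes_\bichar(D,\delta)$.
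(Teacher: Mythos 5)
Your construction of the equivariant Hilbert modules $\Hilm_u=(C,\epsilon_u)$ and $\Hilm[F]_v=(D,\epsilon_v)$, the verification of conditions (1)--(5) of Definition~\ref{def:cont_coaction_Hilbert_module} (with condition~(4) coming from~\eqref{eq:cocycle_unitary_density} and condition~(5) from the cocycle identity), and the identification $\Comp(\Hilm_u\boxtimes_\bichar\Hilm[F]_v)\cong(C,\gamma_u)\boxtimes_\bichar(D,\delta_v)$ via Proposition~\ref{pro:boxtimes_Hilbert_modules} coincide with the paper's argument. The divergence, and the gap, is in the final step: how to see that $\Hilm_u\boxtimes_\bichar\Hilm[F]_v$ is isomorphic to $(C,\gamma)\boxtimes_\bichar(D,\delta)$ as a right Hilbert module over itself. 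Your proposed resolution --- ``apply the Heisenberg commutation relation~\eqref{eq:V-Heisenberg_pair} to produce a bicharacter-valued commutator correction'' that moves $(\Id_D\otimes\beta)(v)_{23}$ past $\iota_C(C)$ --- is not an available tool. Equation~\eqref{eq:V-Heisenberg_pair} is a relation between $\multunit[A]_{1\alpha}$ and $\multunit[B]_{2\beta}$ only; for a general Heisenberg pair there is no element-wise commutation rule between $\beta(B)$ and $\alpha(A)$ (let alone between $v_{2\beta}$ and $(\Id\otimes\alpha)\gamma(c)_{13}$) with an explicit correction term. This is exactly the analytic obstruction discussed at the end of the introduction, and it is why the whole paper works only with commutation of \emph{closed linear spans} (Proposition~\ref{prop:Commutation_V-Heisen_V-anti_Heisen} and Lemma~\ref{lem:boxtimes_crossed_product}), never with element-wise corrections.

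Your claimed identity $\xi\cdot\iota_C(C)\cdot\iota_D(D)=\iota_{C'}(\Hilm_u)\cdot\iota_{D'}(\Hilm[F]_v)$ is in fact true, but the correct justification is span-level: apply Lemma~\ref{lem:boxtimes_crossed_product} inside the linking algebras $C'=\Comp(C\oplus\Hilm_u)$ and $D'=\Comp(D\oplus\Hilm[F]_v)$ to the corners $X\in\{C,\Hilm_u\}$ and $Y\in\{D,\Hilm[F]_v\}$ (each of which satisfies the required Podle\'s-type condition), giving $v_{2\beta}\,\iota_C(C)\,\iota_D(D)=\overline{\iota_{D'}}(T)\,\iota_{D'}(D)\,\iota_C(C)=\iota_{D'}(\Hilm[F]_v)\,\iota_C(C)=\iota_C(C)\,\iota_{D'}(\Hilm[F]_v)$, and then absorbing $u_{1\alpha}$ on the left. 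The paper packages precisely this computation into the operators $S\boxtimes_\bichar\Id_{\Hilm[F]}$ and $\Id_{\Hilm}\boxtimes_\bichar T$ of Section~\ref{sec:functoriality_corr} applied to the \emph{non-equivariant} identity unitaries $S\colon C\to\Hilm_u$ and $T\colon D\to\Hilm[F]_v$; their composite is your left multiplication by (essentially) $\xi$, and its unitarity plus the span-level commutations do all the work. So your candidate generator is right, but as written the hardest step of your proof rests on a commutation mechanism that does not exist; replace it by Lemma~\ref{lem:boxtimes_crossed_product} in the linking algebras (or invoke the functoriality for adjointable operators directly, as the paper does) and the argument closes.
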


This isomorphism is not one of crossed products, that is, it is
not compatible with the embeddings of \(C\) and~\(D\).

\begin{proof}
  Let~\(\Hilm\) be~\(C\) viewed as a Hilbert module over
  itself.  Define the coaction \(\epsilon\colon \Hilm\to
  \tilde\Mult(\Hilm\otimes A)\) by \(\epsilon(c)\defeq u\cdot
  \gamma(c)\).  We claim that this gives a
  \(\G\)\nb-equivariant Hilbert \(C\)\nb-module.  Conditions
  (1)--(3) in Definition~\ref{def:cont_coaction_Hilbert_module}
  are immediate.  Condition~(4) is equivalent
  to~\eqref{eq:cocycle_unitary_density} by taking adjoints,
  and~(5) is equivalent to the cocycle
  condition~\eqref{cond:cocycle_unitary}.

  Since \(\Hilm=C\) as a Hilbert module, the left
  multiplication action of~\(C\) gives an isomorphism \(C\cong
  \Comp(\Hilm)\).  The induced \(\G\)\nb-coaction
  on~\(\Comp(\Hils)\) is, however, not equivalent to~\(\gamma\)
  but to~\(\gamma_u\): \(\gamma_u(c_1)\cdot \epsilon(c_2) =
  \epsilon(c_1c_2)\) for all \(c_1,c_2\in C\).

  Similarly, let~\(\Hilm[F]\) be~\(D\) viewed as a Hilbert module over
  itself, with the \(\G[H]\)\nb-coaction \(\varphi\colon \Hilm[F]\to
  \tilde\Mult(\Hilm[F]\otimes B)\), \(d\mapsto v\cdot \delta(d)\).
  Then \(\Comp(\Hilm[F])\cong D\) with induced coaction~\(\delta_v\).
  Now Proposition~\ref{pro:boxtimes_Hilbert_modules} gives
  \[
  (C,\gamma_u) \boxtimes_\bichar (D,\gamma_v)
  \cong \Comp(\Hilm\boxtimes_\bichar \Hilm[F]).
  \]
  The identity maps \(C\to\Hilm\) and \(D\to\Hilm[F]\) are
  (non-equivariant) unitary operators.  They give unitary
  operators
  \[
  (C,\gamma)\boxtimes_\bichar (D,\delta)
  \to (\Hilm,\epsilon)\boxtimes_\bichar (D,\delta)
  \to (\Hilm,\epsilon)\boxtimes_\bichar (\Hilm[F],\varphi)
  \]
  of Hilbert \((C,\gamma)\boxtimes_\bichar
  (D,\delta)\)-modules.  Conjugating by this unitary gives a
  \(\Cst\)\nb-algebra isomorphism
  \[
  (C,\gamma)\boxtimes_\bichar (D,\delta) \cong
  \Comp(C\boxtimes_\bichar D) \to
  \Comp(\Hilm\boxtimes_\bichar \Hilm[F]).
  \]
  Now compose this with the isomorphism
  \(\Comp(\Hilm\boxtimes_\bichar \Hilm[F]) \cong (C,\gamma_u)
  \boxtimes_\bichar (D,\gamma_v)\).
\end{proof}

We describe the isomorphism above more explicitly.  To simplify
notation, we treat only \(u\) and assume \(v=1\).  The linking
algebra for~\(\Hilm\) is \(\Mat_2(C)\) with the
\(\G\)\nb-coaction
\[
\begin{pmatrix}
  c_{11}&c_{12}\\c_{21}&c_{22}
\end{pmatrix} \mapsto
\begin{pmatrix}
  \gamma(c_{11})& \gamma(c_{12})u^*\\
  u\gamma(c_{21})&u\gamma(c_{22})u^*
\end{pmatrix}.
\]
The upper left and lower right corners are \((C,\gamma)\)
and~\((C,\gamma_u)\), respectively.  Thus
\((C,\gamma)\boxtimes_\bichar (D,\delta)\) and
\((C,\gamma_u)\boxtimes_\bichar (D,\delta)\) are subalgebras of
\(\Mat_2(C)\boxtimes D\).

Conjugation by the partial isometry
\(s=\bigl(\begin{smallmatrix}0&0\\1&0\end{smallmatrix}\bigr)\)
and its adjoint gives isomorphisms between the two corners
\(C\subseteq\Mat_2(C)\).  The strictly continuous extension of
\(\iota_{\Mat_2(C)}\) maps~\(s\) to a partial isometry in
\(\Mat_2(C)\boxtimes_\bichar D\).  Conjugation by this partial
isometry and its adjoint restricts to isomorphisms between
\((C,\gamma)\boxtimes_\bichar (D,\delta)\) and
\((C,\gamma_u)\boxtimes_\bichar (D,\delta)\).

Call a continuous coaction \emph{inner} if it is a
cocycle-twist of the trivial coaction.

\begin{corollary}
  \label{cor:crossed_prod_iso_tensor_prod_inner_action}
  The crossed product \((C,\gamma)\boxtimes_\bichar
  (D,\delta)\) is isomorphic to~\(C\otimes D\) if \(\gamma\)
  or~\(\delta\) is inner.
\end{corollary}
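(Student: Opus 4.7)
The corollary follows almost immediately by combining the two preceding results: Theorem~\ref{the:cocycle_boxtimes_isomorphism} on cocycle invariance of~\(\boxtimes_\bichar\), and Example~\ref{exa:boxtimes_trivial_coaction}, which identifies \(\boxtimes_\bichar\) with the ordinary minimal tensor product whenever one of the two coactions is trivial. The plan is to reduce the inner case to the trivial case via the cocycle.

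First I would unpack the definition: by the sentence immediately preceding the corollary, saying that~\(\gamma\) is inner means that there exists a \(\tau\)\nb-cocycle \(u\in\U(C\otimes A)\) with \(\gamma = \Ad_u\circ\tau = \tau_u\), where \(\tau\colon C\to C\otimes A\), \(c\mapsto c\otimes 1_A\), is the trivial coaction. Before invoking Theorem~\ref{the:cocycle_boxtimes_isomorphism}, I must verify that~\(u\) satisfies the Podle\'s-type condition~\eqref{eq:cocycle_unitary_density}; but this is free, because \(\gamma=\tau_u\) is by assumption a continuous coaction, so Lemma~\ref{lemm:cocycle_induced_coaction} (applied to the trivial coaction~\(\tau\)) gives exactly \(\tau(C)\cdot u^*\cdot(1_C\otimes A) = C\otimes A\).

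Now apply Theorem~\ref{the:cocycle_boxtimes_isomorphism} to the coactions \(\tau\) on~\(C\) and \(\delta\) on~\(D\), with cocycles~\(u\) on~\(C\) and the trivial cocycle \(v=1_{D\otimes B}\) on~\(D\) (which automatically satisfies~\eqref{eq:cocycle_unitary_density} and induces back \(\delta_v=\delta\)). This yields
\[
(C,\tau)\boxtimes_\bichar (D,\delta)
\cong
(C,\tau_u)\boxtimes_\bichar (D,\delta)
= (C,\gamma)\boxtimes_\bichar (D,\delta).
\]
On the other hand, Example~\ref{exa:boxtimes_trivial_coaction} identifies the left hand side with~\(C\otimes D\), giving the desired isomorphism \((C,\gamma)\boxtimes_\bichar (D,\delta)\cong C\otimes D\).

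The case where~\(\delta\) is inner is strictly symmetric: one invokes Proposition~\ref{pro:symmetry_of_twisted_tens_prod} to swap the roles of \((C,\gamma)\) and \((D,\delta)\) and of \(\bichar\) and~\(\Dubichar\), or one just runs the same argument on the second factor using the other half of Theorem~\ref{the:cocycle_boxtimes_isomorphism} and of Example~\ref{exa:boxtimes_trivial_coaction}. Since all the heavy lifting has already been done in those two results, there is no serious obstacle; the only point requiring any care is the verification of the Podle\'s condition for~\(u\), and that is handled cleanly by Lemma~\ref{lemm:cocycle_induced_coaction}.
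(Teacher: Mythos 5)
Your proof is correct and follows essentially the same route as the paper: verify the Podle\'s condition~\eqref{eq:cocycle_unitary_density} for the cocycle via Lemma~\ref{lemm:cocycle_induced_coaction}, then combine Theorem~\ref{the:cocycle_boxtimes_isomorphism} with Example~\ref{exa:boxtimes_trivial_coaction} to reduce the inner case to the trivial one. The only difference is that you spell out the symmetric case and the trivial cocycle \(v=1\) more explicitly than the paper does.
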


\begin{proof}
  Let \(u\in\Mult(C\otimes A)\) be a cocycle for the trivial coaction
  \(\tau(c)\defeq c\otimes 1\) and let \(\gamma=\tau_u\).  The
  cocycle~\(u\) satisfies~\eqref{eq:cocycle_unitary_density} by
  Lemma~\ref{lemm:cocycle_induced_coaction}.  Now
  Theorem~\ref{the:cocycle_boxtimes_isomorphism} and
  Example~\ref{exa:boxtimes_trivial_coaction} give
  \((C,\gamma)\boxtimes_\bichar D\cong (C,\tau)\boxtimes_\bichar D
  \cong C\otimes D\).  A similar proof works if~\(\delta\) is inner.
\end{proof}

\begin{example}
  \label{exa:tensor_Comp}
  Let \(U^{\Hils}\) and~\(U^{\Hils[K]}\) be corepresentations
  of \(A\) and~\(B\) on Hilbert spaces \(\Hils\)
  and~\(\Hils[K]\).  These are cocycles for the trivial action
  on~\(\Comp(\Hils)\).
  Assume~\eqref{eq:cocycle_unitary_density} to get continuous
  coactions on \(\Comp(\Hils)\) and \(\Comp(\Hils[K])\).  Then
  \[
  \Comp(\Hils)\boxtimes_\bichar \Comp(\Hils[K])
  \cong \Comp(\Hils)\otimes\Comp(\Hils[K])
  \cong \Comp(\Hils\otimes\Hils[K]).
  \]

  This explains the Hilbert space realisation of
  \(C\boxtimes_\bichar D\) in
  Theorem~\ref{the:crossed_prod_for_cov_reps_in_Hilb_sp_lavel}
  in the case where the corepresentations \(U^{\Hils}\)
  and~\(U^{\Hils[K]}\) used there satisfy the technical
  condition~\eqref{eq:cocycle_unitary_density}.  Then we get a
  faithful morphism \(C\boxtimes_\bichar D\to
  \Comp(\Hils)\boxtimes_\bichar \Comp(\Hils[K])\) from
  Proposition~\ref{pro:functoriality_injective_surjective}.
  When we identify \(\Comp(\Hils)\boxtimes_\bichar
  \Comp(\Hils[K])\cong \Comp(\Hils\otimes\Hils[K])\) as above,
  we get a faithful representation of~\(C\boxtimes_\bichar D\)
  on~\(\Hils\otimes\Hils[K]\).
\end{example}

\section{Examples of twisted tensor products}
\label{sec:examples_boxtimes}

We show in Section~\ref{sec:graded} that the skew-commutative
tensor product of \(\Z/2\)\nb-graded \(\Cst\)\nb-algebras is a
special case of our theory.

In Section~\ref{sec:general_group_coactions} we consider the
case where both \(A\) and~\(B\) are duals of locally compact
groups; in particular, this covers the case where \(A\)
and~\(B\) are locally compact Abelian groups.  Here we
understand bicharacters in a classical way, and we show that
\(C \boxtimes_\bichar D\) for any bicharacter is a Rieffel
deformation of \(C\otimes D\).

In Section~\ref{sec:crossed_products}, we treat crossed
products for coactions and construct the dual coaction on a
crossed product using the functoriality
of~\(\boxtimes_\bichar\).

\subsection{Skew-commutative tensor products}
\label{sec:graded}

Let \(\Z/2=\{0,1\}\) be the two-element group.  Let
\(\G=\G[H]\) be \(\Cst(\Z/2)\) with the usual comultiplication.
Thus a \(\G\)\nb-coaction on a \(\Cst\)\nb-algebra~\(C\) is a
\(\Z/2\)\nb-grading: a decomposition \(C=C_0\oplus C_1\) into
involutive, closed, linear subspaces \(C_0\) and~\(C_1\) of
even and odd elements such that
\[
C_i\cdot C_j = C_{i+j \bmod 2},\qquad
C_i^*=C_i.
\]
Equivalently, \(\alpha'(c_0+c_1) \defeq c_0-c_1\) for \(c_i\in
C_i\) defines an involutive \Star{}automorphism of~\(C\).

The \emph{skew-commutative tensor product} of two
\(\Z/2\)\nb-graded \(\Cst\)\nb-algebras \(C\) and~\(D\) is
defined in \cite{Kasparov:Operator_K}*{§2.6} by imposing the
commutation relation that \(c\in C\) and \(d\in D\)
anti-commute if both are odd, and commute if one of them is
even.  This leads to the \Star{}algebra structure
\begin{align*}
  (c_1\hodot d_1)\cdot (c_2\hodot d_2) &\defeq
  (-1)^{\deg(c_2)\cdot \deg(d_1)} c_1c_2\hodot d_1d_2,
  \\
  (c\hodot d)^* &\defeq (-1)^{\deg(c)\cdot \deg(d)} c^*\hodot d^*
\end{align*}
on the algebraic tensor product \(C\hodot D\) of \(C\)
and~\(D\).  The skew-commutative \(\Cst\)\nb-tensor product
\(C\hot D\) is the completion of the \Star{}algebra \(C\hodot
D\) in the \(\Cst\)\nb-norm
\begin{equation}
  \label{eq:norm_hot}
  \norm{x} \defeq
  \sup \frac{(\rho \hot \lambda)(y^*\cdot x^*\cdot x\cdot y)}
  {(\rho \hot \lambda)(y^*\cdot y)}
\end{equation}
over all non-zero elements \(y\in C\hat\odot D\) and all even
states \(\rho\in C^*\) and \(\lambda\in D^*\) (even means
that \(\rho\) and~\(\lambda\) vanish on \(C_1\) and~\(D_1\),
respectively); here the products and adjoints are with respect
to the \Star{}algebra structure on \(C\hodot D\).

The obvious formulas define morphisms \(\iota_C\colon C\to
C\hot D\) and \(\iota_D\colon D\to C\hot D\), so that \(C\hot
D\) is a crossed product of \(C\) and~\(D\).  We want to show
that \(C\hot D\cong C\boxtimes_\bichar D\) for a suitable
bicharacter \(\bichar\in\U(\hat{A}\otimes\hat{A})\).

The dual \(\DuG\) is the group~\(\Z/2\), so that
\(\hat{A}\otimes\hat{B}\cong \Cont(\Z/2\times\Z/2)\) and a
bicharacter~\(\bichar\) is a bicharacter
\(\Z/2\times\Z/2\to\T\) in a more classical sense.  The unique
non-trivial bicharacter is defined by \(\bichar(1,1)=-1\) and
\(\bichar(i,j)=1\) if \(i=0\) or \(j=0\).

\begin{theorem}
  \label{the:boxtimes_skew-commutative}
  Let \(C\) and~\(D\) be \(\Z/2\)\nb-graded
  \(\Cst\)\nb-algebras and let~\(\bichar\) be the non-trivial
  bicharacter in \(\Cont(\Z/2\times\Z/2)\).  Then the crossed
  product \((C\boxtimes_\bichar D,\iota_C,\iota_D)\) of \(C\)
  and~\(D\) is naturally isomorphic to their skew-commutative
  tensor product.
\end{theorem}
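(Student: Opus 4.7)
The plan is to use Theorem~\ref{the:crossed_prod_for_cov_reps_in_Hilb_sp_lavel} to realise $C\boxtimes_\bichar D$ on a Hilbert space explicitly and recognise the outcome as the classical Kasparov spatial model of the skew-commutative tensor product.

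First I unpack the data. With $A = \Cst(\Z/2)$ generated by a self-adjoint unitary $g$, a continuous coaction $\gamma\colon C\to C\otimes A$ is the same as a $\Z/2$-grading $C = C_0\oplus C_1$, with $\gamma(c) = c\otimes g^{\deg c}$ on homogeneous elements. Let $p_0, p_1$ denote the minimal projections in $\hat A \cong \Cont(\Z/2)$; then $\multunit[A] = p_0\otimes 1 + p_1\otimes g$ and the non-trivial bicharacter is $\bichar = 1 - 2(p_1\otimes p_1) \in \U(\hat A\otimes\hat A)$, which is self-adjoint with $\bichar^2 = 1$. A covariant Hilbert-space representation of $(C,\gamma)$ is then the same thing as a graded $\ast$-representation $\varphi\colon C\to\Bound(\Hils_C)$ together with a grading operator $\epsilon_C = P_0 - P_1$ on $\Hils_C$ satisfying $\epsilon_C\varphi(c)\epsilon_C = \varphi(\sigma_C(c))$ for the grading automorphism $\sigma_C$.

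Second, pick faithful graded representations $(\varphi,\epsilon_C)$ and $(\psi,\epsilon_D)$ of $C$ and $D$ on graded Hilbert spaces $\Hils_C, \Hils_D$; these exist because even states separate graded $\Cst$-algebras (for any $c$, $c^*c$ is even, so averaging a norming state with its composition with $\sigma$ yields an even norming state). Applying Theorem~\ref{the:crossed_prod_for_cov_reps_in_Hilb_sp_lavel}, and writing $P_0, P_1$ and $Q_0, Q_1$ for the even and odd projections on $\Hils_C$ and $\Hils_D$, the corresponding unitary on $\Hils_C\otimes\Hils_D$ is
\[ Z = (\rho_1\otimes\rho_2)(\bichar)^* = 1 - 2(P_1\otimes Q_1) = P_0\otimes 1 + P_1\otimes\epsilon_D. \]
A direct computation using that $\psi(d)$ commutes with $\epsilon_D$ for even $d$ and anticommutes for odd $d$ gives $\tilde{\psi}_2(d) = Z(1\otimes\psi(d))Z^* = \epsilon_C^{\deg d}\otimes\psi(d)$ on homogeneous $d$. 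Hence the faithful representation of $C\boxtimes_\bichar D$ on $\Hils_C\otimes\Hils_D$ sends
\[ \iota_C(c)\mapsto\varphi(c)\otimes 1,\qquad \iota_D(d)\mapsto\epsilon_C^{\deg d}\otimes\psi(d), \]
which is precisely the classical Kasparov formula for the spatial representation of $C\hot D$; in particular $\iota_D(d)\iota_C(c) = (-1)^{\deg c\cdot\deg d}\iota_C(c)\iota_D(d)$ on homogeneous elements.

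Third, assemble the isomorphism. The formulas above show that $c\hodot d \mapsto \iota_C(c)\iota_D(d)$ is a well-defined $\ast$-homomorphism from the graded $\ast$-algebra $C\hodot D$ into $C\boxtimes_\bichar D$ (a routine sign check of multiplication and involution), with dense image and intertwining $\iota_C$ and $\iota_D$. To promote this into an isomorphism $C\hot D \congto C\boxtimes_\bichar D$, match norms: for any pair of even states $\rho, \lambda$ on $C$ and $D$ the GNS Hilbert spaces are naturally graded, and the resulting tensor-product representation of $C\hodot D$ is precisely the Kasparov formula above, so the GNS expression in~\eqref{eq:norm_hot} coincides with the norm of the image in $\Bound(\Hils_\rho\otimes\Hils_\lambda)$; summing over sufficiently many such pairs gives faithful graded representations and matches the two suprema. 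The only genuinely analytic input in the plan is this last identification, which reduces to the fact that GNS representations of even states exhaust graded $\ast$-representations up to direct sum; the remaining steps are finite-dimensional bookkeeping with $\bichar = 1 - 2(p_1\otimes p_1)$.
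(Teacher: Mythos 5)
Your proposal is correct and follows essentially the same route as the paper: realise $C\boxtimes_\bichar D$ on $\Hils\otimes\Hils[K]$ via Theorem~\ref{the:crossed_prod_for_cov_reps_in_Hilb_sp_lavel}, compute $Z$ from~\eqref{eq:Z_via_universal_lift} to recover the Koszul-sign representations, and then identify the operator norm with the norm~\eqref{eq:norm_hot} by taking direct sums of GNS representations of even states (using that even states give faithful graded representations, exactly as in the paper). The only cosmetic difference is that you make $\multunit[A]$, $\bichar$ and $Z$ explicit in terms of the projections $p_0,p_1$ and the grading operators, which the paper leaves implicit.
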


\begin{proof}
  A covariant representation of \(C\) is given by a
  \(\Z/2\)\nb-graded Hilbert space
  \(\Hils=\Hils_0\oplus\Hils_1\) and a representation
  \(\varphi\colon C\to\Bound(\Hils)\) with
  \(\varphi(c_i)(\Hils_j)\subseteq \Hils_{i+j}\) for all
  \(i,j\in\Z/2\).  We choose such a faithful covariant
  representation of~\(A\) and a faithful covariant
  representation \(\psi\colon D\to\Bound(\Hils[K])\) on a
  \(\Z/2\)\nb-graded Hilbert space
  \(\Hils[K]=\Hils[K]_0\oplus\Hils[K]_1\).

  Since \(\hat{A}^\univ=\hat{A}\), the unitary~\(Z\) that is
  used in the Hilbert space description of \(C\boxtimes_\bichar
  D\) is described most easily
  by~\eqref{eq:Z_via_universal_lift}.  This gives
  \(Z(\xi\otimes\eta) = -\xi\otimes\eta\) if \(\xi\in\Hils_1\)
  and \(\eta\in\Hils[K]_1\), and \(Z(\xi\otimes\eta) =
  \xi\otimes\eta\) if \(\xi\in\Hils_0\) or
  \(\eta\in\Hils[K]_0\).  Thus \(\Sigma Z\colon
  \Hils\otimes\Hils[K]\to\Hils[K]\otimes\Hils\) is the braiding
  operator from the Koszul sign rule.  The representations
  \(\varphi_1\) and~\(\tilde\psi_2\) in
  Theorem~\ref{the:crossed_prod_for_cov_reps_in_Hilb_sp_lavel}
  are
  \[
  \varphi_1(c)(\xi\otimes\eta)
  = (\varphi(c)\xi)\otimes\eta,\qquad
  \tilde\psi_2(d)(\xi\otimes\eta)
  = (-1)^{\deg(d)\deg(\xi)}\xi\otimes\psi(d)\eta,
  \]
  as expected from the Koszul sign rule.  It remains to show
  that this pair of representations of \(C\) and~\(D\) yields a
  faithful representation of the skew-commutative tensor
  product \(C\hot D\).  It is clear that we get a
  \Star{}representation of~\(C\hodot D\).

  We must show that, for any \(x\in C\hodot D\), its operator
  norm on \(\Hils\otimes\Hils[K]\) is equal to the norm defined
  in~\eqref{eq:norm_hot}.  The GNS-representation for an even
  state \(\rho\colon C\to\C\) on the Hilbert space
  \(L^2(C,\rho)\) is a covariant representation if we let
  \(L^2(C,\rho)_i\) be the closure of~\(C_i\) in
  \(L^2(C,\rho)\).  The direct sum of these GNS-representations
  for all even states is a faithful representation of~\(C\)
  because any state on~\(C_0\) extends to a state on~\(C\) and
  a representation of~\(C\) is faithful once it is faithful
  on~\(C_0\).  Since \(C\boxtimes_\bichar D\) does not depend
  on the covariant representations, we may assume that
  \(\varphi\) and~\(\psi\) are these direct sums of covariant
  GNS-representations of \(C\) and~\(D\), respectively.  The
  resulting representations \(\varphi_1\) and~\(\tilde\psi_2\)
  are block diagonal with respect to the direct sum over the
  even states \(\rho\) and~\(\lambda\), and each block is
  obtained from the GNS-representation for the pair of even
  states \(\rho\) and~\(\lambda\).  The elements \(y\in C\odot
  D\) in~\eqref{eq:norm_hot} form a dense subset of the Hilbert
  space \(L^2(C,\rho)\otimes L^2(D,\lambda)\), and the
  expression in~\eqref{eq:norm_hot} for fixed \(\rho\)
  and~\(\lambda\) is precisely the norm quotient \(\norm{x\cdot
    y}/\norm{y}\), where \(x\cdot y\) is defined using
  \(\varphi_1\odot \tilde\psi_2\).  Hence the norm
  in~\eqref{eq:norm_hot} is exactly the operator norm for a
  particular choice of the covariant representations
  \(\varphi\) and~\(\psi\).
\end{proof}

For instance, there is an isomorphism
\[
\Cst(\Z/2)\boxtimes_\bichar \Cst(\Z/2) \cong \Mat_2(\C),
\]
mapping the generators of the two copies of the group~\(\Z/2\) to the
anti-commuting involutions
\[
g_1 \defeq \begin{pmatrix}
  1&0\\0&-1
\end{pmatrix},\qquad
g_2 \defeq \begin{pmatrix}
  0&1\\1&0
\end{pmatrix}.
\]
Combining this computation with the discussion after
Proposition~\ref{pro:functoriality_injective_surjective} gives an
alternative description of the skew-commutative tensor product: it is
the crossed product generated by the embeddings of \(C\) and~\(D\)
into \(\Mat_2(C\otimes D)\), mapping \(c\mapsto c\otimes 1\otimes 1\)
for even \(c\in C\), \(c\mapsto c\otimes 1\otimes g_1\) for odd \(c\in
C\), \(d\mapsto 1\otimes d\otimes 1\) for even \(d\in D\), \(d\mapsto
1\otimes d\otimes g_2\) for odd \(d\in D\).

\subsection{General group coactions}
\label{sec:general_group_coactions}

Now we consider the case where \(A=\Cred(G)\) and
\(B=\Cred(H)\) for two locally compact groups \(G\) and~\(H\)
with the usual comultiplications.  Thus coactions of \(\G\)
and~\(\G[H]\) are coactions of these groups \(G\) and~\(H\) in
the usual sense.  We are going to identify \(C\boxtimes_\bichar
D\) with a Rieffel deformation of the commutative tensor
product \(C\otimes D\) in the sense
of~\cite{Kasprzak:Rieffel_deformation}.  To begin with, we
reduce to the case where both \(G\) and~\(H\) are Abelian.

The dual quantum groups are the groups \(G\) and~\(H\),
respectively.  Since \(\hat{A}\otimes\hat{B} = \Contvin(G\times
H)\), a bicharacter \(\bichar\in
\U(\hat{A}\otimes\hat{B})\) is a bicharacter
\(\bichar\colon G\times H\to\T\) in the classical sense.
Since~\(\T\) is commutative, \(\bichar(g,h)\) vanishes if \(g\)
or~\(h\) is a commutator.  Hence~\(\bichar\) descends to a
continuous biadditive map \(\bichar'\colon G^\ab\times
H^\ab\to\T\) on the Abelianisations \(G^\ab\) and~\(H^\ab\),
giving us a bicharacter
\(\bichar^\ab\in\U(\Contvin(G^\ab\times H^\ab))\).  The
quotient maps \(G\to G^\ab\) and \(H\to H^\ab\) are quantum
group morphisms.  They allow us to turn the given coactions of
\(G\) and~\(H\) on \(C\) and~\(D\) into coactions of \(G^\ab\)
and~\(H^\ab\), respectively.
Theorem~\ref{the:functoriality_qgr_morphism} shows
\(C\boxtimes_\bichar D = C\boxtimes_{\bichar^\ab} D\), where
the right hand side uses only the induced coactions of
\(G^\ab\) and~\(H^\ab\).  Hence we may without loss of
generality assume that \(G\) and~\(H\) are Abelian locally
compact groups.

Let \(\hat{G}\) and~\(\hat{H}\) be their Pontryagin duals.  We
may also view a bicharacter as a continuous group homomorphism
\(G\to\hat{H}\) or \(H\to\hat{G}\) by fixing one of the two
variables.  This makes it easy to list all bicharacters for two
given Abelian locally compact groups \(G\) and~\(H\).
Coactions of \(G\) and~\(H\) are equivalent to actions of
\(\hat{G}\) and~\(\hat{H}\), respectively.  Thus \(C\)
and~\(D\) carry actions of \(\hat{G}\) and~\(\hat{H}\),
respectively.  The commutative tensor product
\[
E\defeq C\otimes D
\]
inherits an action of \(\Gamma\defeq \hat{G}\times \hat{H}\).
The bicharacter \(\bichar\colon G\times H\to\T\) yields a
bicharacter
\[
\Psi\colon \hat{\Gamma}\times\hat{\Gamma} \to \T,\qquad
\Psi\bigl((g_1,h_1),(g_2,h_2)\bigr) \defeq
\bichar(g_2,h_1)^{-1}.
\]
Any bicharacter is also a two-cocycle, which may be used as a
deformation parameter for Rieffel deformations.  Here we define
Rieffel deformations following
Kasprzak~\cite{Kasprzak:Rieffel_deformation} using crossed
products and Landstad theory.

\begin{theorem}
  \label{the:twisted_tensor_as_Rieffel_def}
  \(C\boxtimes_\bichar D\) is naturally isomorphic to the
  Rieffel deformation of~\(E\) with respect to~\(\Psi\).
\end{theorem}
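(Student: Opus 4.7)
The strategy is to realise both $C\boxtimes_\bichar D$ and the Rieffel deformation $E^\Psi$ as one and the same $\Cst$\nb-subalgebra of $\Bound(\Hils\otimes\Hils[K])$ for a suitable Hilbert space model.

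First, I choose a faithful covariant representation $(\varphi,U^{\Hils})$ of $(C,\gamma,A)$ and a faithful covariant representation $(\psi,U^{\Hils[K]})$ of $(D,\delta,B)$. Since $G$ and $H$ are Abelian, Pontryagin duality turns $U^{\Hils}\in\U(\Comp(\Hils)\otimes A)$ into a strongly continuous unitary representation $V\colon\hat{G}\to\U(\Hils)$ that implements the $\hat{G}$-action on~$C$ corresponding to $\gamma$, and $U^{\Hils[K]}$ into $W\colon\hat{H}\to\U(\Hils[K])$ implementing the $\hat{H}$-action on~$D$ corresponding to $\delta$. The tensor product representation $V\otimes W\colon\Gamma\to\U(\Hils\otimes\Hils[K])$ together with $\pi\defeq\varphi\otimes\psi$ is a faithful covariant representation of the $\Gamma$-\(\Cst\)-algebra $E=C\otimes D$; this will also serve as the input to Kasprzak's construction.

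Second, I make the unitary $Z$ from Theorem~\ref{the:V-Heis_Comm_corep} completely explicit. In the Abelian case, universal and reduced duals coincide: $\hat{A}^\univ=\hat{A}=\Contvin(G)$ and $\hat{B}^\univ=\hat{B}=\Contvin(H)$, so the lift $\bichar^\univ$ agrees with $\bichar\in\U(\Contvin(G\times H))$. The representations $\rho_1\colon\hat{A}^\univ\to\Bound(\Hils)$ and $\rho_2\colon\hat{B}^\univ\to\Bound(\Hils[K])$ from Section~\ref{sec:univ_qgr} are just the continuous functional calculi of $V$ and $W$. Formula~\eqref{eq:Z_via_universal_lift} therefore gives
\[
Z=(\rho_1\otimes\rho_2)(\bichar)^{*}\in\U(\Hils\otimes\Hils[K]),
\]
a joint spectral integration of $\bichar^{-1}$ against $V$ in the first leg and $W$ in the second. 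By Theorem~\ref{the:crossed_prod_for_cov_reps_in_Hilb_sp_lavel}, the twisted tensor product $C\boxtimes_\bichar D$ is then the norm-closed linear span of $\varphi_1(C)\cdot\tilde\psi_2(D)$ inside $\Bound(\Hils\otimes\Hils[K])$, where $\varphi_1(c)=\varphi(c)\otimes 1_{\Hils[K]}$ and $\tilde\psi_2(d)=Z(1_{\Hils}\otimes\psi(d))Z^{*}$.

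Third, I match this realisation with Kasprzak's construction. Kasprzak defines $E^\Psi$ as a Landstad subalgebra of $\Mult(E\rtimes\Gamma)$ for the $\Psi$-twisted dual action of $\hat\Gamma$. Passing to the regular representation of $E\rtimes\Gamma$ determined by $(\pi,V\otimes W)$, Kasprzak's twist is implemented by functional calculus of the cocycle $\Psi\bigl((g_1,h_1),(g_2,h_2)\bigr)=\bichar(g_2,h_1)^{-1}$ applied to the commuting generators $V\otimes 1$ and $1\otimes W$ on $\Hils\otimes\Hils[K]$. A direct computation shows that this unitary coincides with~$Z^{*}$, and then $E^\Psi$ is faithfully represented on $\Hils\otimes\Hils[K]$ as $\varphi_1(C)\cdot\tilde\psi_2(D)$, which is the very same subalgebra representing $C\boxtimes_\bichar D$. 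Naturality in $(C,\gamma)$ and $(D,\delta)$ is automatic because neither construction depends on the choice of covariant representations.

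The main obstacle is purely one of conventions: Kasprzak packages the 2-cocycle, the direction of the dual action, and the Landstad characterisation in ways that have to be translated carefully into our conventions for bicharacters, Heisenberg pairs, and the formula for $Z$. The sign in the definition of $\Psi$ and the distinction between left and right coactions must be tracked through the translation. Once this bookkeeping is carried out, the identification of the two $\Cst$\nb-subalgebras of $\Bound(\Hils\otimes\Hils[K])$ is immediate from the explicit description of $Z$ above.
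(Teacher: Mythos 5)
Your overall strategy coincides with the paper's: represent both $C\boxtimes_\bichar D$ and $E^\Psi$ inside (a representation of) $\Mult(E\rtimes\Gamma)$ on $\Hils\otimes\Hils[K]$ and identify them via the explicit unitary~$Z$ from~\eqref{eq:Z_via_universal_lift}. The first two steps of your argument are fine. The problem is that your third step, where the actual content of the theorem lives, is asserted rather than proved, and the assertion as phrased misdescribes Kasprzak's construction.

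Kasprzak's $E^\Psi$ is the Landstad algebra of a $\Psi$\nb-deformed dual action on $E\rtimes\Gamma$; it is \emph{not} obtained by conjugating $E$ (or its generators) by a single unitary built from $\Psi$ by functional calculus. If it were, $E^\Psi$ would always be isomorphic to $E$ by a global inner perturbation, which is false (noncommutative tori). The correct statement, and the one the paper proves, is asymmetric in the two tensor factors: $E^\Psi=(C\otimes1)^\Psi\cdot(1\otimes D)^\Psi$ with $(C\otimes1)^\Psi=C\otimes1$ \emph{undeformed} and $(1\otimes D)^\Psi=1\otimes fDf^*$ with $f$ mapping to~$Z$. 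Establishing this requires three ingredients that are absent from your proposal: (i) the decomposition $E^\Psi=(C\otimes1)^\Psi\cdot(1\otimes D)^\Psi$, which is \cite{Kasprzak:Rieffel_coaction}*{Lemma 3.4} and not a formal triviality for Landstad algebras; (ii) the observation that the deformation unitaries $U_{g,h}(g_1,h_1)=\bichar(g,h_1)^{-1}$ are \emph{central} in $C\rtimes\Gamma\cong(C\rtimes\hat G)\otimes\Cst(\hat H)$ because $\hat H$ acts trivially on~$C$, whence $(C\otimes1)^\Psi=C\otimes1$; and (iii) the fact that $\Psi$ is \emph{not} similarly trivial on the $D$\nb-factor, so one must pass to the cohomologous cocycle $\Psi'((g_1,h_1),(g_2,h_2))=\bichar(g_1,h_2)$, check $D^{\Psi'}=D$, and use $\Psi=\partial f\cdot\Psi'$ with $f(g,h)=\bichar(g,h)^{-1}$ to conclude $D^\Psi=fDf^*$ via Kasprzak's coboundary lemma. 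Step (iii) is precisely where the conjugation by $Z$ enters, and only on the second factor. Your phrase ``a direct computation shows that this unitary coincides with $Z^*$, and then $E^\Psi$ is faithfully represented \dots as $\varphi_1(C)\cdot\tilde\psi_2(D)$'' compresses all of (i)--(iii) into a sentence; without them the identification does not follow, and the ``purely conventional'' difficulties you flag are not the real obstacle.
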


\begin{proof}
  Pick faithful representations of \(C\rtimes\hat{G}\) and
  \(D\rtimes\hat{H}\) on Hilbert spaces \(\Hils\)
  and~\(\Hils[K]\), respectively.  These give faithful
  covariant representations of \(C\) and~\(D\), which we use to
  represent \(C\boxtimes_\bichar D\) faithfully on
  \(\Hils\otimes\Hils[K]\).  They also generate a faithful
  representation of
  \[
  E\rtimes\Gamma
  \cong (C\rtimes\hat{G}) \otimes (D\rtimes\hat{H})
  \]
  on \(\Hils\otimes\Hils[K]\).  The description of the
  operator~\(Z\) in~\eqref{eq:Z_via_universal_lift} shows that
  \(Z\in\Mult(E\rtimes\Gamma)\).  Thus \(C\boxtimes_\bichar D\)
  is contained in \(\Mult(E\rtimes\Gamma)\), generated by the
  canonical embeddings of \(C\) and~\(D\), with the latter
  twisted by \(\Ad_Z\).

  The Rieffel deformation~\(E^\Psi\) of~\(E\) with respect
  to~\(\Psi\) is described
  in~\cite{Kasprzak:Rieffel_deformation} as a subalgebra of
  \(\Mult(E\rtimes\Gamma)\) as well.  We will use formal
  properties of~\(E^\Psi\) to deduce that \(E^\Psi =
  C\boxtimes_\bichar D\) as \(\Cst\)\nb-subalgebras of
  \(E\rtimes\Gamma\).

  Since \(E=(C\otimes1)\cdot (1\otimes D)\),
  \cite{Kasprzak:Rieffel_coaction}*{Lemma 3.4} yields
  \begin{equation}
    \label{eq:Rieffel_deformation_decompose}
    E^\Psi = (C\otimes 1)^\Psi \cdot (1\otimes D)^\Psi;
  \end{equation}
  Here we let~\(\Gamma\) act on \(C\) and~\(D\) by letting~\(\hat{H}\)
  act trivially on~\(C\) and~\(\hat{G}\) trivially on~\(D\).

  The deformation procedure
  in~\cite{Kasprzak:Rieffel_deformation} uses the unitaries
  \[
  U_{g,h}\in\Cont(G\times H,\T),
  \qquad
  U_{g,h}(g_1,h_1) \defeq \Psi((g_1,h_1),(g,h))
  = \bichar(g,h_1)^{-1}
  \]
  for \((g,h),(g_1,h_1)\in G\times H=\hat{\Gamma}\).
  Since~\(\hat{H}\) acts trivially on~\(C\), we have
  \(C\rtimes\Gamma \cong (C\rtimes \hat{G})\otimes
  \Cst(\hat{H})\).  Hence the unitaries~\(U_{g,h}\) are mapped
  to central elements in \(C\rtimes\Gamma\).  In this case, the
  Rieffel deformation does nothing, that is, \(C^\Psi=C\) as
  subalgebras of \(\Mult(C\rtimes\Gamma)\).  Thus
  \((C\otimes1)^\Psi\)
  in~\eqref{eq:Rieffel_deformation_decompose} is represented
  on~\(\Hils\otimes\Hils[K]\) by \(\varphi_1(C)\).

  Now define another two-cocycle on~\(\hat{\Gamma}\) by
  \(\Psi'((g_1,h_1),(g_2,h_2)) \defeq \bichar(g_1,h_2)\); the
  Rieffel deformation for~\(\Psi'\) involves the unitaries
  \(U'_{g,h}(g_1,h_1) \defeq \Psi'((g_1,h_1),(g,h)) =
  \bichar(g_1,h)\), which are mapped to central elements in
  \(D\rtimes\Gamma \cong \Cst(\hat{G}) \otimes (D\rtimes
  \hat{H})\).  Therefore, \(D^{\Psi'}=D\) as subalgebras of
  \(\Mult(D\rtimes\Gamma)\).

  The two-cocycles \(\Psi\) and~\(\Psi'\) are cohomologous: let
  \(f(g,h)=\bichar(g,h)^{-1}\), then
  \[
  \partial f((g_1,h_1),(g_2,h_2)) \defeq
  \frac{f(g_1g_2,h_1h_2)}{f(g_1,h_1)f(g_2,h_2)}
  = \bichar(g_2,h_1)^{-1}\bichar(g_1,h_2)^{-1};
  \]
  thus \(\partial f\cdot\Psi'=\Psi\).  Now
  \cite{Kasprzak:Rieffel_deformation}*{Lemmas 3.4 and 3.5}
  yield
  \[
  D^\Psi = (D^{\Psi'})^{\partial f} = D^{\partial f} = fDf^*
  \]
  as \(\Cst\)\nb-subalgebras of \(D\rtimes\Gamma\).  Here~\(f\)
  is viewed as a unitary element of \(\Cst(\Gamma)\subseteq
  D\rtimes\Gamma\).

  Equation~\eqref{eq:Z_via_universal_lift} shows that the
  representation \(D\rtimes \Gamma \to
  E\rtimes\Gamma\to\Bound(\Hils\otimes\Hils[K])\) maps~\(f\)
  to~\(Z\).  Thus \((1\otimes D)^\Psi = 1\otimes fDf^*\)
  in~\eqref{eq:Rieffel_deformation_decompose} is represented
  on~\(\Hils\otimes\Hils[K]\) by \(\tilde\psi_2(D)\).  Finally,
  \eqref{eq:Rieffel_deformation_decompose} becomes \(E^\Psi =
  \varphi_1(C)\cdot\tilde\psi_2(D) \cong C\boxtimes_\bichar D\)
  as desired.
\end{proof}

\subsection{Crossed products}
\label{sec:crossed_products}

Consider the special case where \(\G[H]=\DuG\),
\(\bichar=\multunit[A]\in\U(\hat{A}\otimes A)\),
\(D=\hat{A}\), \(\delta=\DuComult[A]\colon \hat{A}\to
\hat{A}\otimes\hat{A}\).  We claim that
\((C,\gamma)\boxtimes_{\multunit[A]} (\hat{A},\DuComult[A])\) is
the \emph{reduced crossed product} of~\((C,\gamma)\).  More
precisely, the reduced crossed product \(C\rtimes_\red
\hat{A}\) comes equipped with canonical morphisms
\(\iota_C\colon C\to C\rtimes_\red \hat{A}\) and
\(\iota_{\hat{A}}\colon \hat{A}\to C\rtimes_\red \hat{A}\),
such that \((C\rtimes_\red \hat{A}, \iota_C,\iota_{\hat{A}})\)
is a crossed product in the sense of
Definition~\ref{def:crossed_product}.  We claim that this is
equivalent to \((C,\gamma)\boxtimes_{\multunit[A]}
(\hat{A},\DuComult[A])\) as a crossed product.

Let \((\pi,\hat{\pi})\) be a \(\G\)\nb-Heisenberg pair on the
Hilbert space~\(\Hils\) of the special form in
Example~\ref{exa:Heisenberg_from_Multunit}; that is,
\(\multunit_{\hat\pi\pi}=\Multunit\) is a multiplicative
unitary generating~\(\G\).

\begin{theorem}
  \label{the:crossed_as_boxtimes}
  There is a faithful morphism \(\rho\colon (C,\gamma)
  \boxtimes_{\multunit[A]} (\hat{A},\DuComult[A]) \to
  C\otimes\Comp(\Hils)\) with
  \(\rho\circ\iota_C=\gamma_{1\pi}\) and
  \(\rho\circ\iota_{\hat{A}}=\hat{\pi}_2\), where
  \begin{alignat*}{2}
    \gamma_{1\pi}&\colon C\to\Mult(C\otimes\Comp(\Hils)),
    &\qquad c&\mapsto (\Id_C\otimes\pi)\gamma(c),\\
    \hat\pi_2&\colon \hat{A}\to\Mult(C\otimes\Comp(\Hils)),
    &\qquad \hat{a}&\mapsto 1_C\otimes\hat\pi(\hat{a}).
  \end{alignat*}
\end{theorem}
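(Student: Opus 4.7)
The plan is to apply Theorem~\ref{the:crossed_prod_for_cov_reps_in_Hilb_sp_lavel} with strategically chosen faithful covariant representations and then identify the induced faithful Hilbert-space representation as a morphism into \(C\otimes\Comp(\Hils)\). I pick any faithful representation \(\varphi_0\colon C\to\Bound(\Hils_0)\). Example~\ref{exa:existence_of_faithful_covariant_representation} then gives the faithful covariant representation \((\varphi,U^{\Hils_0\otimes\Hils})\) of \((C,\gamma,A)\) with \(\varphi=(\varphi_0\otimes\pi)\gamma\) and corepresentation \(U^{\Hils_0\otimes\Hils}=1_{\Hils_0}\otimes\multunit[A]_{\hat\pi 2}\). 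For the \(\DuG\)-\(\Cst\)-algebra \((\hat A,\DuComult[A])\), the pair \((\hat\pi,V)\) on \(\Hils\) with \(V\defeq(\pi\otimes\Id_{\hat A})(\Dumultunit[A])\in\U(\Comp(\Hils)\otimes\hat A)\) is a faithful covariant representation; covariance is exactly Lemma~\ref{lemm:equiv_cond_V_Heisenberg_pair}(4) applied to the Heisenberg pair \((\pi,\hat\pi)\), since the right quantum morphism associated to the bicharacter \(\Dumultunit[A]\) is \(\DuComult[A]\).

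Formula~\eqref{eq:Z_via_universal_lift} identifies the unitary \(Z\in\U(\Hils_0\otimes\Hils\otimes\Hils)\) arising from Theorem~\ref{the:V-Heis_Comm_corep} as \(Z=1_{\Hils_0}\otimes\Multunit^*\), where \(\Multunit\defeq(\hat\pi\otimes\pi)(\multunit[A])\in\U(\Hils\otimes\Hils)\): the universal lift of \(\bichar=\multunit[A]\) is the canonical bicharacter in \(\U(\hat A^\univ\otimes A^\univ)\) reducing to \(\multunit[A]\), and the representations of \(\hat A^\univ\) and \(A^\univ\) implementing the two corepresentations factor through \(1_{\Hils_0}\otimes\hat\pi\circ\Lambda_{\hat A}\) and \(\pi\circ\Lambda_A\) respectively. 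Theorem~\ref{the:crossed_prod_for_cov_reps_in_Hilb_sp_lavel} then yields a faithful morphism \(\tilde\rho\colon(C,\gamma)\boxtimes_{\multunit[A]}(\hat A,\DuComult[A])\to\Bound(\Hils_0\otimes\Hils\otimes\Hils)\) satisfying \(\tilde\rho(\iota_C(c))=\varphi(c)\otimes 1_\Hils\) and \(\tilde\rho(\iota_{\hat A}(\hat a))=1_{\Hils_0}\otimes[\Multunit^*(1_\Hils\otimes\hat\pi(\hat a))\Multunit]\).

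To conclude, I would construct a faithful morphism \(\sigma\colon C\otimes\Comp(\Hils)\to\Bound(\Hils_0\otimes\Hils\otimes\Hils)\) through which \(\tilde\rho\) factors as \(\tilde\rho=\sigma\circ\rho\), obtaining the desired \(\rho\colon C\boxtimes_{\multunit[A]}\hat A\to C\otimes\Comp(\Hils)\) with \(\rho\circ\iota_C=\gamma_{1\pi}\) and \(\rho\circ\iota_{\hat A}=\hat\pi_2\). Faithfulness of \(\rho\) then follows from that of \(\tilde\rho\) and \(\sigma\). The main obstacle is producing this intertwining morphism \(\sigma\) and verifying the factorisation: this rests on using the pentagon equation for \(\Multunit\) together with the density identity \(\pi(A)\cdot\hat\pi(\hat A)=\Comp(\Hils)\) (valid for modular multiplicative unitaries) to realise the three-leg Hilbert-space image of \(\tilde\rho\) as a faithful copy of the two-leg algebra \(C\otimes\Comp(\Hils)\), matched on generators via the identifications \(\gamma_{1\pi}(c)=(\Id_C\otimes\pi)\gamma(c)\) and \(\hat\pi_2(\hat a)=1_C\otimes\hat\pi(\hat a)\).
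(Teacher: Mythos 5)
Your setup is correct and takes a genuinely different route from the paper: the paper never invokes Theorem~\ref{the:crossed_prod_for_cov_reps_in_Hilb_sp_lavel} here, but instead composes the pair \((\gamma_{1\pi},\hat\pi_2)\) with the faithful morphism \(\gamma_{1\pi}\otimes\Id_{\Comp(\Hils)}\) and conjugates by \(\Sigma_{23}\Multunit^*\) until it recognises the defining pair \((\iota_C,\iota_{\hat A})\) composed with \(\Id_C\otimes\hat\pi\otimes\Id\). Your route through covariant representations is legitimate, and your identifications are all right: the covariance of \((\hat\pi,(\pi\otimes\Id_{\hat A})\Dumultunit[A])\) is indeed Lemma~\ref{lemm:equiv_cond_V_Heisenberg_pair}(4) because the right quantum group morphism attached to \(\Dumultunit[A]\) is \(\DuComult[A]\), and \eqref{eq:Z_via_universal_lift} does give \(Z=1_{\Hils_0}\otimes\Multunit^*\) since the implementing representations factor as \(1_{\Hils_0}\otimes\hat\pi\circ\Lambda_{\hat A}\) and \(\pi\circ\Lambda_A\).

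The gap is your last paragraph: the construction of \(\sigma\) is the actual content of the proof, you leave it as an acknowledged obstacle, and the tool you propose for closing it — the density \(\pi(A)\cdot\hat\pi(\hat A)=\Comp(\Hils)\) — is not what is needed. The missing morphism is simply
\[
\sigma\defeq \Ad_{\Multunit_{23}^*}\circ\bigl(\varphi\otimes\Id_{\Comp(\Hils)}\bigr)\colon
C\otimes\Comp(\Hils)\to\Bound(\Hils_0\otimes\Hils\otimes\Hils),
\qquad \varphi=(\varphi_0\otimes\pi)\gamma .
\]
Then \(\sigma(\hat\pi_2(\hat a))=\Multunit_{23}^*\hat\pi(\hat a)_3\Multunit_{23}=\tilde\rho(\iota_{\hat A}(\hat a))\) is immediate, while the comodule property of \(\gamma\) gives \((\varphi\otimes\Id)(\gamma_{1\pi}(c))=(\varphi_0\otimes(\pi\otimes\pi)\Comult[A])\gamma(c)\), whence
\[
\sigma(\gamma_{1\pi}(c))
= \Multunit_{23}^*\bigl(\varphi_0\otimes(\pi\otimes\pi)\Comult[A]\bigr)\gamma(c)\,\Multunit_{23}
= (\varphi_0\otimes\pi)\gamma(c)_{12}
= \tilde\rho(\iota_C(c)),
\]
using \((\pi\otimes\pi)\Comult[A](a)=\Multunit(\pi(a)\otimes1_{\Hils})\Multunit^*\), which follows from the pentagon equation and \eqref{eq:Comult_W} and is exactly the identity the paper's own proof uses. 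Since \(\sigma\) is a faithful nondegenerate representation of \(C\otimes\Comp(\Hils)\), its extension to multipliers is faithful, and \(\rho\defeq\sigma^{-1}\circ\tilde\rho\) is the asserted faithful morphism; its nondegeneracy follows from the Podle\'s condition for \(\gamma\) and the nondegeneracy of \(\pi\) and \(\hat\pi\). With this step supplied your argument is complete, and it amounts to the paper's conjugation-by-\(\Multunit_{23}\) computation repackaged inside the covariant-representation framework.
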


We define the reduced crossed product \(C\rtimes_\red \hat{A}\) as the
crossed product generated by the representations \(\gamma_{1\pi}\)
and~\(\hat\pi_2\).  This definition is standard for locally compact
quantum groups with Haar weights, where \(\pi\) and~\(\hat\pi\) are
taken as the regular representations.  For general \(\Cst\)\nb-quantum
groups, Theorem~\ref{the:crossed_as_boxtimes} says that
\(C\rtimes_\red \hat{A}\) does not depend on the choice
of~\(\Multunit\) and that there is an isomorphism of crossed products
\[
(C,\gamma) \boxtimes_{\multunit[A]} (\hat{A},\DuComult[A]) \cong
C\rtimes_\red \hat{A}.
\]

\begin{proof}[Proof of
  Theorem~\textup{\ref{the:crossed_as_boxtimes}}]
  Since
  \[
  \gamma_{1\pi} \otimes \Id_{\Comp(\Hils)}\colon
  C\otimes\Comp(\Hils)
  \to C\otimes \Comp(\Hils)\otimes\Comp(\Hils)
  \]
  is a faithful morphism, the pair of representations
  \((\gamma_{1\pi},\hat\pi_2)\) generates a faithful
  representation of \(C\boxtimes\hat{A}\) if and only the pair
  \(((\gamma_{1\pi} \otimes \Id_{\Comp(\Hils)})\circ
  \gamma_{1\pi}, (\gamma_{1\pi} \otimes
  \Id_{\Comp(\Hils)})\circ \hat\pi_2)\) does so.  We have
  \((\gamma_{1\pi} \otimes \Id_{\Comp(\Hils)})\circ
  \hat\pi_2(\hat{a}) = \hat\pi_3(\hat{a})\) and
  \[
  (\gamma_{1\pi} \otimes \Id_{\Comp(\Hils)}) \gamma_{1\pi}(c)
  = (\gamma\otimes\Id_A)\gamma(c)_{1\pi\pi}
  = (\Id_C\otimes\Comult[A])\gamma(c)_{1\pi\pi}
  = \Multunit
  (\Id\otimes\pi)\gamma(c)_{12} \Multunit{}^*.
  \]
  Let~\(\Sigma_{23}\) be the coordinate flip.  Conjugating both
  representations by the same
  unitary~\(\Sigma_{23}\Multunit{}^*\) gives a unitarily
  equivalent pair of representations.  Hence we may further
  replace \(\gamma_{1\pi}\) and~\(\hat\pi_2\) by the
  representations \(c\mapsto (\Id\otimes\pi)\gamma(c)_{13}\)
  of~\(C\) and
  \[
  \hat{a}\mapsto
  \Sigma_{23}\Multunit{}^* \hat\pi(\hat{a})_3 \Multunit\Sigma_{23}
  = \DuComult(\hat{a})_{\hat\pi\hat\pi}
  \]
  of~\(\hat{A}\); here we use the standard description
  of~\(\DuComult\) in terms of~\(\Multunit\).

  Thus we arrive at the pair of representations \((\Id_C\otimes
  \hat\pi \otimes\Id_{\Comp(\Hils)}) \iota_C\) and
  \((\Id_C\otimes \hat\pi \otimes\Id_{\Comp(\Hils)})
  \iota_{\hat{A}}\) with \(\iota_C=\gamma_{1\pi}\) and
  \(\iota_{\hat{A}} = (\DuComult[A])_{2\hat\pi}\) in
  \(C\otimes\hat{A}\otimes\Comp(\Hils)\).  Since~\(\hat\pi\) is
  faithful, this pair is equivalent to
  \((\iota_C,\iota_{\hat{A}})\).  Since this pair defines the
  crossed product \(C\boxtimes\hat{A}\), we see that
  \((\gamma_{1\pi},\hat\pi_2)\) generates an equivalent crossed
  product as claimed.
\end{proof}

Viewing the reduced crossed product as a special case
of~\(\boxtimes\) gives us more freedom because we may also
tensor \((C,\gamma)\) with other \(\DuG\)\nb-\(\Cst\)-algebras
and use functoriality.  We now describe the dual coaction in
this way, using the functoriality of~\(\boxtimes\).

The comultiplication \(\DuComult\colon \hat{A}\to
\hat{A}\otimes\hat{A}\) is \(\DuG\)\nb-equivariant if~\(\DuG\) coacts
on \(\hat{A}\otimes\hat{A}\) by \(\Id\otimes\DuComult\colon
\hat{A}\otimes\hat{A} \to \hat{A}\otimes\hat{A}\otimes\hat{A}\).  By
the functoriality of~\(\boxtimes\), this equivariant morphism induces
a morphism
\[
\hat{\delta}\colon C\rtimes_\red\hat{A} \cong C \boxtimes \hat{A}
\to C \boxtimes (\hat{A}\otimes\hat{A})
\cong \hat{A}\otimes (C \boxtimes \hat{A})
\cong \hat{A}\otimes (C \rtimes_\red \hat{A});
\]
here we use Lemma~\ref{lemm:crossed_associativity} in the
second variable to pull out the first factor~\(\hat{A}\).

\begin{lemma}
  \label{lem:dual_coaction_nice}
  The map \(\hat{\delta}\colon C\rtimes_\red\hat{A}\to \hat{A}\otimes
  (C\rtimes_\red\hat{A})\) is a continuous left \(\DuG\)\nb-coaction.
\end{lemma}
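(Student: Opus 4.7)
The plan is to verify the three defining properties of a continuous left coaction---injectivity, coassociativity, and the Podle\'s condition---directly from the construction, exploiting the functoriality of $\boxtimes_{\multunit[A]}$ and the properties of $\DuComult[A]$.

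First I would pin down what $\hat\delta$ does on generators. The construction proceeds by applying the bifunctor $\Id_C \boxtimes_{\multunit[A]} \blank$ to the $\DuG$-equivariant morphism $\DuComult[A]\colon (\hat A,\DuComult[A]) \to (\hat A\otimes \hat A,\Id_{\hat A}\otimes\DuComult[A])$ and then composing with the canonical isomorphism from Lemma~\ref{lemm:crossed_associativity} with $D_0=\hat A$. Tracing the proof of Lemma~\ref{lemm:crossed_associativity}, the embedding $\iota_{\hat A\otimes\hat A}$ equals $\Id_{\hat A}\otimes \iota_{\hat A}$ under the identification $C\boxtimes_{\multunit[A]}(\hat A\otimes\hat A)\cong \hat A\otimes(C\boxtimes_{\multunit[A]}\hat A)$. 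Consequently
\[
\hat\delta(\iota_C(c)) = 1_{\hat A}\otimes \iota_C(c),\qquad
\hat\delta(\iota_{\hat A}(\hat a)) = (\Id_{\hat A}\otimes \iota_{\hat A})(\DuComult[A](\hat a)).
\]

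Injectivity then follows from Proposition~\ref{pro:functoriality_injective_surjective}: both $\Id_C$ and $\DuComult[A]$ are injective morphisms, hence so is $\Id_C\boxtimes_{\multunit[A]}\DuComult[A]$, and the isomorphism from Lemma~\ref{lemm:crossed_associativity} preserves injectivity. Coassociativity reduces to coassociativity of $\DuComult[A]$: evaluating $(\DuComult[A]\otimes\Id)\hat\delta$ and $(\Id_{\hat A}\otimes\hat\delta)\hat\delta$ on $\iota_C(c)$ both give $1\otimes 1\otimes \iota_C(c)$, and on $\iota_{\hat A}(\hat a)$ they give $(\Id\otimes\Id\otimes\iota_{\hat A})(\DuComult[A]\otimes\Id)\DuComult[A](\hat a)$ and $(\Id\otimes\Id\otimes\iota_{\hat A})(\Id\otimes\DuComult[A])\DuComult[A](\hat a)$ respectively, which agree by~\eqref{eq:coassociative} for $\DuComult[A]$. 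Since both composites are nondegenerate $^*$-homomorphisms and $\iota_C(C)\cdot\iota_{\hat A}(\hat A)$ is dense in $C\boxtimes_{\multunit[A]}\hat A$, they agree everywhere.

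For the Podle\'s condition, I would compute directly:
\begin{align*}
(\hat A\otimes 1)\cdot \hat\delta(C\boxtimes_{\multunit[A]}\hat A)
&= (\hat A\otimes 1)\cdot (1\otimes \iota_C(C))\cdot (\Id\otimes\iota_{\hat A})(\DuComult[A](\hat A))\\
&= (1\otimes \iota_C(C))\cdot (\Id\otimes\iota_{\hat A})\bigl((\hat A\otimes 1)\cdot\DuComult[A](\hat A)\bigr)\\
&= (1\otimes \iota_C(C))\cdot (\hat A\otimes \iota_{\hat A}(\hat A))
= \hat A\otimes (C\boxtimes_{\multunit[A]} \hat A),
\end{align*}
using that $\hat A\otimes 1$ commutes with $1\otimes \iota_C(C)$ and the Podle\'s identity $(\hat A\otimes 1)\cdot\DuComult[A](\hat A)=\hat A\otimes\hat A$ from~\eqref{eq:Podles} applied to $\DuG$. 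No step is genuinely difficult; the main obstacle is simply keeping the bookkeeping straight for the identification in Lemma~\ref{lemm:crossed_associativity} so that the formula for $\hat\delta$ on generators is correctly read off.
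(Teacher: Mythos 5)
Your proposal is correct and follows essentially the same route as the paper's proof: injectivity via Proposition~\ref{pro:functoriality_injective_surjective}, coassociativity reduced to that of \(\DuComult[A]\) through functoriality, and the Podle\'s condition obtained by applying \(\iota_{\hat{A}\otimes\hat{A}}=\Id_{\hat{A}}\otimes\iota_{\hat{A}}\) to the identity \((\hat{A}\otimes 1)\cdot\DuComult[A](\hat{A})=\hat{A}\otimes\hat{A}\). You simply spell out the generator formulas and the closed-linear-span bookkeeping that the paper leaves implicit.
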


\begin{proof}
  The comodule property of~\(\hat{\delta}\) follows from the
  coassociativity of~\(\DuComult\) and the funcotriality
  of~\(\boxtimes\).  The map~\(\hat{\delta}\) is faithful by
  Proposition~\ref{pro:functoriality_injective_surjective}.
  The Podle\'s condition for~\(\hat{\delta}\) follows because
  \((\hat{A}\otimes1)\DuComult(\hat{A})=\hat{A}\otimes\hat{A}\):
  apply~\(\iota_{\hat{A}\otimes\hat{A}}\) to this equality.
\end{proof}

This coaction is uniquely determined by the conditions
\(\hat{\delta}(\iota_C(c)) = 1\otimes\iota_C(c)\) and
\(\hat{\delta}(\iota_{\hat{A}}(\hat{a})) =
(\Id_{\hat{A}}\otimes\iota_{\hat{A}})\DuComult\).  The same
conditions characterise the \emph{dual coaction}.  Thus we have
indeed constructed the dual coaction.

The functoriality of~\(\boxtimes\) in the first variable gives
us the usual functoriality of reduced crossed products.

General tensor products \(C\boxtimes_{\multunit} (D,\delta)\)
are closely related to the crossed product through
Lemma~\ref{lemm:G_Cst_alg_indentific}, which shows that
\(\delta\colon D\to D\otimes\hat{A}\) is a
\(\DuG\)\nb-equivariant embedding for the coaction
\(\Id_D\otimes\DuComult\) on \(D\otimes\hat{A}\).  By
Proposition~\ref{pro:functoriality_injective_surjective} and
Lemma~\ref{lemm:crossed_associativity}, this induces a faithful
morphism
\[
C\boxtimes_{\multunit} D \to
C\boxtimes_{\multunit} (D\otimes\hat{A}) \cong
D\otimes (C\boxtimes_{\multunit} \hat{A}) \cong
D\otimes (C\rtimes_\red \hat{A}).
\]

Now we consider once again the general situation of two quantum
groups \(\Qgrp{G}{A}\) and \(\Qgrp{H}{B}\) and a bicharacter
\(\bichar\in\U(\hat{A}\otimes\hat{B})\).

\begin{theorem}
  \label{the:boxtimes_in_reduced}
  View \(\hat{A}\otimes\hat{B}\) as a subalgebra of
  \((C\rtimes_\red\hat{A})\otimes (D\rtimes_\red\hat{B})\) via
  \(\iota_{\hat{A}}\otimes\iota_{\hat{B}}\) and use this to
  view~\(\bichar\) as a multiplier of \((C\rtimes_\red\hat{A})\otimes
  (D\rtimes_\red\hat{B})\).  The embeddings
  \begin{alignat*}{2}
    (\iota_C)_1&\colon C\to
    (C\rtimes_\red\hat{A})\otimes (D\rtimes_\red\hat{B}),&\qquad
    c&\mapsto \iota_C(c)\otimes 1,\\
    \Ad_{\bichar^*}\circ(\iota_D)_2&\colon D\to
    (C\rtimes_\red\hat{A})\otimes (D\rtimes_\red\hat{B}),&\qquad
    d&\mapsto \bichar^* (1\otimes \iota_D(d))\bichar,
  \end{alignat*}
  induce a faithful morphism
  \[
  C\boxtimes_\bichar D\to
  (C\rtimes_\red\hat{A})\otimes (D\rtimes_\red\hat{B}).
  \]
\end{theorem}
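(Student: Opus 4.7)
The plan is to compare the statement with the Hilbert-space realisation of $C\boxtimes_\bichar D$ from Theorem~\ref{the:crossed_prod_for_cov_reps_in_Hilb_sp_lavel}, by identifying the unitary~$Z$ used there with the image of $\bichar^*$ in a faithful representation of $(C\rtimes_\red\hat{A})\otimes (D\rtimes_\red\hat{B})$.

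First, I would fix faithful covariant representations $(\varphi,U^{\Hils})$ of $(C,\gamma,A)$ on a Hilbert space~$\Hils$ and $(\psi,U^{\Hils[K]})$ of $(D,\delta,B)$ on~$\Hils[K]$, which exist by Example~\ref{exa:existence_of_faithful_covariant_representation}. By Theorem~\ref{the:crossed_as_boxtimes} applied to each factor, these data give faithful representations of $C\rtimes_\red\hat{A}$ on~$\Hils$ and of $D\rtimes_\red\hat{B}$ on~$\Hils[K]$ that restrict to $\varphi$ and~$\psi$ on $C$ and~$D$, and that restrict on $\hat{A}$ and~$\hat{B}$ to the unique representations $\hat\varphi,\hat\psi$ satisfying $(\hat\varphi\otimes\Id_A)\multunit[A]=U^{\Hils}$ and $(\hat\psi\otimes\Id_B)\multunit[B]=U^{\Hils[K]}$. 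Their spatial tensor product is a faithful representation of $(C\rtimes_\red\hat{A})\otimes (D\rtimes_\red\hat{B})$ on $\Hils\otimes\Hils[K]$, under which $(\iota_C)_1(c)\mapsto \varphi(c)\otimes 1$, $(\iota_D)_2(d)\mapsto 1\otimes \psi(d)$, and $(\iota_{\hat{A}}\otimes\iota_{\hat{B}})(\bichar)\mapsto (\hat\varphi\otimes\hat\psi)(\bichar)$.

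Second, I would match this with~$Z$. Equation~\eqref{eq:Z_via_universal_lift} gives $Z=(\rho_1\otimes\rho_2)(\bichar^\univ)^*$ where $\rho_i$ are the representations of $\hat{A}^\univ,\hat{B}^\univ$ determined by $U^{\Hils},U^{\Hils[K]}$. From $(\hat\varphi\otimes\Id_A)\multunit[A]=U^{\Hils}=(\rho_1\otimes\Id_A)\maxcorep[A]$ together with $\multunit[A]=(\Lambda_{\hat{A}}\otimes\Id_A)\maxcorep[A]$ and the uniqueness clause applied to the lifting of~$U^{\Hils}$ (cf.~Section~\ref{sec:univ_qgr}), one deduces $\rho_1=\hat\varphi\circ\Lambda_{\hat{A}}$ and likewise $\rho_2=\hat\psi\circ\Lambda_{\hat{B}}$. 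Since $\bichar=(\Lambda_{\hat{A}}\otimes\Lambda_{\hat{B}})(\bichar^\univ)$, this yields the key identity
\[
Z=(\hat\varphi\otimes\hat\psi)(\bichar^*),
\]
i.e.\ $Z$ is precisely the image of $(\iota_{\hat{A}}\otimes\iota_{\hat{B}})(\bichar^*)$ under our faithful representation.

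Third, I would conclude. By Theorem~\ref{the:crossed_prod_for_cov_reps_in_Hilb_sp_lavel}, $C\boxtimes_\bichar D$ is faithfully represented on $\Hils\otimes\Hils[K]$ by $\iota_C(c)\mapsto\varphi(c)\otimes 1$ and $\iota_D(d)\mapsto Z(1\otimes\psi(d))Z^*$; the previous step shows that these are exactly the images under the faithful representation of $(C\rtimes_\red\hat{A})\otimes (D\rtimes_\red\hat{B})$ of the elements $(\iota_C)_1(c)$ and $\bichar^*(1\otimes\iota_D(d))\bichar$. Thus the pair of morphisms $\bigl((\iota_C)_1,\Ad_{\bichar^*}\!\circ(\iota_D)_2\bigr)$ into $\Mult\bigl((C\rtimes_\red\hat{A})\otimes(D\rtimes_\red\hat{B})\bigr)$ descends to the required morphism on $C\boxtimes_\bichar D$, and it is faithful because composing it with a faithful Hilbert-space representation of the target reproduces the faithful representation~$\rho$ of Theorem~\ref{the:crossed_prod_for_cov_reps_in_Hilb_sp_lavel}. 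Uniqueness is immediate from $\iota_C(C)\cdot\iota_D(D)=C\boxtimes_\bichar D$. The only non-bookkeeping step is the identification $Z=(\hat\varphi\otimes\hat\psi)(\bichar^*)$, which is the heart of the argument but follows cleanly from the universal description of~$Z$ and uniqueness of bicharacter lifts.
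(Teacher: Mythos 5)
Your overall strategy is the paper's: represent \((C\rtimes_\red\hat{A})\otimes(D\rtimes_\red\hat{B})\) faithfully on \(\Hils\otimes\Hils[K]\), identify \(Z\) with the image of \(\bichar^*\) via~\eqref{eq:Z_via_universal_lift}, and match the resulting pair of representations with the one from Theorem~\ref{the:crossed_prod_for_cov_reps_in_Hilb_sp_lavel}. Your second and third steps, including \(\rho_1=\hat\varphi\circ\Lambda_{\hat{A}}\) and the key identity \(Z=(\hat\varphi\otimes\hat\psi)(\bichar^*)\), are exactly what the paper does.

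The gap is in your first step. For an \emph{arbitrary} faithful covariant representation \((\varphi,U^{\Hils})\) there need not exist a representation \(\hat\varphi\) of the \emph{reduced} dual \(\hat{A}\) with \((\hat\varphi\otimes\Id_A)\multunit[A]=U^{\Hils}\): the corepresentation \(U^{\Hils}\) only integrates to a representation of the universal dual \(\hat{A}^\univ\), and this factors through \(\Lambda_{\hat{A}}\colon\hat{A}^\univ\to\hat{A}\) only for special corepresentations. Already for \(C=\C\) with the trivial coaction of \(A=\Contvin(G)\), \(G\) a non-amenable locally compact group, the trivial corepresentation gives a faithful covariant representation that does not come from \(\hat{A}=\Cred(G)\). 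Even when \(\hat\varphi\) exists, the faithfulness of the induced representation of \(C\rtimes_\red\hat{A}\) is a further assertion that Theorem~\ref{the:crossed_as_boxtimes} does not supply: that theorem realises \(C\rtimes_\red\hat{A}\) inside \(C\otimes\Comp(\Hils_\pi)\) for a \(\G\)\nb-Heisenberg pair, not for a general covariant pair. The paper avoids both problems by fixing the data in the opposite order: it starts from a faithful representation \(\varphi_0\) of \(C\) and a Heisenberg pair \((\pi,\hat\pi)\) as in Example~\ref{exa:Heisenberg_from_Multunit}, obtains a manifestly faithful representation \(\varphi'\) of \(C\rtimes_\red\hat{A}\subseteq C\otimes\Comp(\Hils_\pi)\) by restricting \(\varphi_0\otimes\Id\), and then observes that the restrictions of \(\varphi'\) to \(C\) and \(\hat{A}\) assemble into the faithful covariant representation of Example~\ref{exa:existence_of_faithful_covariant_representation}, to which Theorem~\ref{the:crossed_prod_for_cov_reps_in_Hilb_sp_lavel} applies. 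If you replace ``fix faithful covariant representations, which exist by Example~\ref{exa:existence_of_faithful_covariant_representation}'' by ``fix the \emph{specific} covariant representations constructed in Example~\ref{exa:existence_of_faithful_covariant_representation}'', the rest of your argument goes through verbatim.
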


\begin{proof}
  Choose faithful representation \(\varphi_0\colon
  C\to\Bound(\Hils_0)\).  Let \((\pi,\hat\pi)\) be a
  \(\G\)\nb-Heisenberg pair as in
  Example~\ref{exa:Heisenberg_from_Multunit}, acting on a
  Hilbert space~\(\Hils_\pi\).  Let \(\Hils\defeq
  \Hils_0\otimes\Hils_\pi\).  Then we get a faithful
  representation \(\varphi_0\otimes\Id\) of
  \(C\otimes\Comp(\Hils_\pi)\) on~\(\Hils\).  This restricts to
  a faithful representation \(\varphi'\colon
  C\rtimes_\red\hat{A}\subseteq
  C\otimes\Comp(\Hils_\pi)\to\Bound(\Hils)\), where we realise
  \(C\rtimes_\red\hat{A}\) as in
  Theorem~\ref{the:crossed_as_boxtimes}.

  We compare this with the construction of a covariant representation
  of~\((C,\gamma)\) in
  Example~\ref{exa:existence_of_faithful_covariant_representation}.
  We see that this covariant representation consists of
  \(\rho\circ\iota_C\colon C\to\Bound(\Hils)\) and
  \(\multunit[A]_{\rho\iota_{\hat{A}}2}\in \U(\Comp(\Hils_\pi)\otimes
  A)\).  Furthermore, the representation of~\(\hat{A}^\univ\) used
  later in the proof of Theorem~\ref{the:V-Heis_Comm_corep} is
  \(\rho\iota_{\hat{A}}\circ \Lambda\) for the reducing morphism
  \(\Lambda\colon \hat{A}^\univ\to\hat{A}\).  (Actually, any
  representation of \(C\rtimes_\red \hat{A}\) gives a covariant
  representation of \((C,\gamma)\) in a similar way.)

  Now do the same things for \((D,\delta)\): let \(\psi_0\colon
  D\to\Bound(\Hils[K]_0)\) be a faithful representation; choose
  an \(\G[H]\)\nb-Heisenberg pair \((\rho,\hat\rho)\) as in
  Example~\ref{exa:Heisenberg_from_Multunit}, acting on a
  Hilbert space~\(\Hils[K]_\rho\); let \(\Hils[K]\defeq
  \Hils[K]_0\otimes\Hils[K]_\rho\); let~\(\psi'\) be the
  resulting faithful representation of \(D\rtimes_\red\hat{B}\)
  on~\(\Hils[K]\); construct a covariant representation of
  \((D,\delta)\) on~\(\Hils[K]\) as in
  Example~\ref{exa:existence_of_faithful_covariant_representation}.

  Theorem~\ref{the:crossed_prod_for_cov_reps_in_Hilb_sp_lavel}
  gives a faithful representation of \(C\boxtimes_\bichar D\)
  on \(\Hils\otimes\Hils[K]\), generated by the representations
  \(\varphi_1\) and \(\Ad_Z\psi_2\).  By construction, we also
  get a faithful representation \(\varphi'\otimes\psi'\) of
  \((C\rtimes_\red \hat{A})\otimes (D\rtimes_\red\hat{B})\) on
  \(\Hils\otimes\Hils[K]\).  The description of~\(Z\)
  in~\eqref{eq:Z_via_universal_lift} yields
  \(Z=(\varphi'\otimes\psi')(\bichar^*)\).  Hence the
  representations \(\varphi_1\) and \(\Ad_Z\psi_2\) both factor
  through the embedding~\(\varphi'\otimes\psi'\) and the maps
  \((\iota_C)_1\) and \(\Ad_{\bichar^*}\circ(\iota_D)_2\) in
  the statement of the theorem.  We thus get a faithful
  morphism \(C\boxtimes_\bichar D \to (C\rtimes_\red
  \hat{A})\otimes (D\rtimes_\red\hat{B})\) restricting to
  \((\iota_C)_1\) and \(\Ad_{\bichar^*}\circ(\iota_D)_2\) on
  \(C\) and~\(D\), respectively.
\end{proof}

For instance, in the situation of Section~\ref{sec:graded},
this realises the skew-commutative tensor product \(C\otimes
D\) as a subalgebra of \((C\rtimes\Z/2)\otimes
(D\rtimes\Z/2)\).

\begin{bibdiv}
  \begin{biblist}
\bib{Baaj-Skandalis:Hopf_KK}{article}{
  author={Baaj, Saad},
  author={Skandalis, Georges},
  title={\(C^*\)\nobreakdash -alg\`ebres de Hopf et th\'eorie de Kasparov \'equivariante},
  journal={\(K\)-Theory},
  volume={2},
  date={1989},
  number={6},
  pages={683--721},
  issn={0920-3036},
  review={\MRref {1010978}{90j:46061}},
  doi={10.1007/BF00538428},
}

\bib{Baaj-Skandalis:Unitaires}{article}{
  author={Baaj, Saad},
  author={Skandalis, Georges},
  title={Unitaires multiplicatifs et dualit\'e pour les produits crois\'es de $C^*$\nobreakdash -alg\`ebres},
  journal={Ann. Sci. \'Ecole Norm. Sup. (4)},
  volume={26},
  date={1993},
  number={4},
  pages={425--488},
  issn={0012-9593},
  review={\MRref {1235438}{94e:46127}},
  eprint={http://www.numdam.org/item?id=ASENS_1993_4_26_4_425_0},
}

\bib{Daele-Keer:Yang-Baxter}{article}{
  author={van Daele, Alfons},
  author={van Keer, S.},
  title={The Yang--Baxter and pentagon equation},
  journal={Compositio Math.},
  volume={91},
  date={1994},
  number={2},
  pages={201--221},
  issn={0010-437X},
  review={\MRref {1273649}{95c:16052}},
  eprint={http://www.numdam.org/item?id=CM_1994__91_2_201_0},
}

\bib{Exel:Blend_Alloys}{article}{
  author={Exel, Ruy},
  title={Blends and alloys},
  journal={C. R. Math. Rep. Acad. Sci. Canada},
  date={2013},
  status={accepted},
}

\bib{Kasparov:Operator_K}{article}{
  author={Kasparov, Gennadi G.},
  title={The operator \(K\)\nobreakdash -functor and extensions of \(C^*\)\nobreakdash -algebras},
  journal={Izv. Akad. Nauk SSSR Ser. Mat.},
  volume={44},
  date={1980},
  number={3},
  pages={571--636, 719},
  issn={0373-2436},
  translation={ journal={Math. USSR-Izv.}, volume={16}, date={1981}, number={3}, pages={513--572 (1981)}, doi={10.1070/IM1981v016n03ABEH001320}, },
  review={\MRref {582160}{81m:58075}},
  eprint={http://mi.mathnet.ru/izv1739},
}

\bib{Kasprzak:Rieffel_deformation}{article}{
  author={Kasprzak, Pawe\l },
  title={Rieffel deformation via crossed products},
  journal={J. Funct. Anal.},
  volume={257},
  date={2009},
  number={5},
  pages={1288--1332},
  issn={0022-1236},
  review={\MRref {2541270}{2010e:46073}},
  doi={10.1016/j.jfa.2009.05.013},
}

\bib{Kasprzak:Rieffel_coaction}{article}{
  author={Kasprzak, Pawe\l },
  title={Rieffel deformation of group coactions},
  journal={Comm. Math. Phys.},
  volume={300},
  date={2010},
  number={3},
  pages={741--763},
  issn={0010-3616},
  review={\MRref {2736961}{2012e:46150}},
  doi={10.1007/s00220-010-1093-9},
}

\bib{Lance:Hilbert_modules}{book}{
  author={Lance, E. {Ch}ristopher},
  title={Hilbert $C^*$\nobreakdash -modules},
  series={London Mathematical Society Lecture Note Series},
  volume={210},
  publisher={Cambridge University Press},
  place={Cambridge},
  date={1995},
  pages={x+130},
  isbn={0-521-47910-X},
  review={\MRref {1325694}{96k:46100}},
}

\bib{Lopez-Panaite-Oystaeyen:Twisting}{article}{
  author={L\'opez Pe\~na, Javier},
  author={Panaite, Florin},
  author={Van Oystaeyen, Freddy},
  title={General twisting of algebras},
  journal={Adv. Math.},
  volume={212},
  date={2007},
  number={1},
  pages={315--337},
  issn={0001-8708},
  review={\MRref {2319771}{2008d:16046}},
  doi={10.1016/j.aim.2006.10.003},
}

\bib{Majid:Quantum_grp}{book}{
  author={Majid, Shahn},
  title={Foundations of quantum group theory},
  publisher={Cambridge University Press},
  address={Cambridge},
  date={1995},
  pages={x+607},
  isbn={0-521-46032-8},
  review={\MRref {1381692}{97g:17016}},
  doi={10.1017/CBO9780511613104},
}

\bib{Meyer-Roy-Woronowicz:Homomorphisms}{article}{
  author={Meyer, Ralf},
  author={Roy, Sutanu},
  author={Woronowicz, Stanis\l aw Lech},
  title={Homomorphisms of quantum groups},
  journal={M\"unster J. Math.},
  volume={5},
  date={2012},
  pages={1--24},
  issn={1867-5778},
  eprint={http://wwwmath1.uni-muenster.de/mjm/vol_5/mjm_vol_5_01.pdf},
}

\bib{Nest-Voigt:Poincare}{article}{
  author={Nest, Ryszard},
  author={Voigt, {Ch}ristian},
  title={Equivariant Poincar\'e duality for quantum group actions},
  journal={J. Funct. Anal.},
  volume={258},
  date={2010},
  number={5},
  pages={1466--1503},
  issn={0022-1236},
  review={\MRref {2566309}{2011d:46143}},
  doi={10.1016/j.jfa.2009.10.015},
}

\bib{Soltan-Woronowicz:Remark_manageable}{article}{
  author={So\l tan, Piotr M.},
  author={Woronowicz, Stanis\l aw Lech},
  title={A remark on manageable multiplicative unitaries},
  journal={Lett. Math. Phys.},
  volume={57},
  date={2001},
  number={3},
  pages={239--252},
  issn={0377-9017},
  review={\MRref {1862455}{2002i:46072}},
  doi={10.1023/A:1012230629865},
}

\bib{Soltan-Woronowicz:Multiplicative_unitaries}{article}{
  author={So\l tan, Piotr M.},
  author={Woronowicz, Stanis\l aw Lech},
  title={From multiplicative unitaries to quantum groups. II},
  journal={J. Funct. Anal.},
  volume={252},
  date={2007},
  number={1},
  pages={42--67},
  issn={0022-1236},
  review={\MRref {2357350}{2008k:46170}},
  doi={10.1016/j.jfa.2007.07.006},
}

\bib{Vaes:Induction_Imprimitivity}{article}{
  author={Vaes, Stefaan},
  title={A new approach to induction and imprimitivity results},
  journal={J. Funct. Anal.},
  volume={229},
  date={2005},
  number={2},
  pages={317--374},
  issn={0022-1236},
  review={\MRref {2182592}{2007f:46065}},
  doi={10.1016/j.jfa.2004.11.016},
}

\bib{Woronowicz:Braided_Qnt_Grp}{article}{
  author={Woronowicz, Stanis\l aw Lech},
  title={An example of a braided locally compact group},
  conference={ title={Quantum Groups: Formalism and Applications, XXX Karpacz Winter School}, address={Karpacz}, date={1994}, },
  book={ publisher={PWN}, place={Warsaw}, },
  date={1995},
  pages={155--171},
  review={\MRref {1647968}{2000a:46101}},
}

\bib{Woronowicz:Multiplicative_Unitaries_to_Quantum_grp}{article}{
  author={Woronowicz, Stanis\l aw Lech},
  title={From multiplicative unitaries to quantum groups},
  journal={Internat. J. Math.},
  volume={7},
  date={1996},
  number={1},
  pages={127--149},
  issn={0129-167X},
  review={\MRref {1369908}{96k:46136}},
  doi={10.1142/S0129167X96000086},
}
  \end{biblist}
\end{bibdiv}
\end{document}